\def\AA{\mathord{\mathbf{A}}}
\def\CC{\mathord{\mathbf{C}}}
\def\QQ{\mathord{\mathbf{Q}}}
\def\ZZ{\mathord{\mathbf{Z}}}
\def\transp{^{\mathord{\textsf{T}}}}
\def\opp{{\rm op}}
\def\Sets{\textsf{Set}}
\def\Sch{\textsf{Sch}}
\def\Sym{\mathop{\rm Sym}}
\def\Sp{\mathop{\rm Sp}}
\def\Aut{\mathop{\rm Aut}}
\def\GL{\mathop{\rm GL}}
\def\rank{\mathop{\rm rank}}
\def\Lie{\mathop{\rm Lie}}
\def\Spec{\mathop{\rm Spec}}
\def\Pic{\mathop{\rm Pic }}
\def\dlog{\mathop{\rm dlog}}
\def\Et{\mathop{\text{Ét}}}
\def\trdeg{\text{\rm trdeg}}
\def\id{\text{\rm id}}
\def\an{\text{\rm an}}
\def\dR{\text{\rm dR}}
\def\et{\text{\rm ét}}
\def\et{\text{\rm ét}}
\def\defeq{\vcentcolon=}
\def\eqdef{=\vcentcolon}
\def\tensor{\otimes}
\def\to{\longrightarrow}
\def\mapsto{\longmapsto}
\newtheorem{theorem}{Theorem}[section]
\newtheorem{prop}[theorem]{Proposition}
\newtheorem{lemma}[theorem]{Lemma}
\newtheorem{coro}[theorem]{Corollary}
\theoremstyle{definition}
\newtheorem{defi}[theorem]{Definition}
\newtheorem{obs}[theorem]{Remark}
\numberwithin{equation}{section}
\title[Higher Ramanujan equations I]{Higher Ramanujan equations I: \\ moduli stacks of abelian varieties and higher Ramanujan vector fields}
\author{Tiago J. Fonseca}
\address{Laboratoire de Mathématiques d'Orsay, Univ. Paris-Sud, CNRS, Université
Paris-Saclay, 91405 Orsay, France.}
\email{tiago.jardim-da-fonseca@math.u-psud.fr}
\date{\today}
\begin{document}
\maketitle

\begin{abstract}
We describe a higher dimensional generalization of Ramanujan's differential equations satisfied by the Eisenstein series $E_2$, $E_4$, and $E_6$. This will be obtained geometrically as follows. For every integer $g\ge 1$, we construct a moduli stack $\mathcal{B}_g$ over $\ZZ$ classifying principally polarized abelian varieties of dimension $g$ equipped with a suitable additional structure:  a \emph{symplectic-Hodge basis} of its first algebraic de Rham cohomology. We prove that $\mathcal{B}_g$ is a smooth Deligne-Mumford stack over $\ZZ$ of relative dimension $2g^2 + g$ and that $\mathcal{B}_g\tensor \ZZ[1/2]$ is representable by a smooth quasi-projective scheme over $\ZZ[1/2]$. Our main result is a description of the tangent bundle $T_{\mathcal{B}_g/\ZZ}$ in terms of the cohomology of the universal abelian scheme over the moduli stack of principally polarized abelian varieties $\mathcal{A}_g$. We derive from this description a family of $g(g+1)/2$ commuting vector fields $(v_{ij})_{1\le i \le j \le g}$ on $\mathcal{B}_g$; these are the \emph{higher Ramanujan vector fields}. In the case $g=1$, we show that $v_{11}$ coincides with the vector field associated to the classical Ramanujan equations.

This geometric framework taking account of integrality issues is mainly motivated by questions in transcendental number theory. In the upcoming second part of this work, we shall relate the values of a particular analytic solution to the differential equations defined by $v_{ij}$ with Grothendieck's periods conjecture on abelian varieties.
\end{abstract}


\tableofcontents

\section{Introduction}

Consider the classical normalized Eisenstein series in $\ZZ[\![q]\!]$
\begin{align*}
E_{2}(q) = 1 - 24 \sum_{n=1}^{\infty}\frac{n q^n}{1-q^n}\text{, }\ \ E_{4}(q) = 1+240 \sum_{n=1}^{\infty}\frac{n^3 q^n}{1-q^n}\text{, } \ \ 
E_6(q) = 1-504 \sum_{n=1}^{\infty}\frac{n^5 q^n}{1-q^n}
\end{align*}
and let $\theta \defeq q\frac{d}{dq}$. In 1916 \cite{ramanujan16} Ramanujan proved that these formal series satisfy the system of algebraic differential equations
\begin{align} \label{rameq} \tag{R}
\theta E_2 = \frac{E_2^2 - E_4}{12}\text{, }\ \ \theta E_3 = \frac{E_2E_4 - E_6}{3}\text{, } \ \ \theta E_4 = \frac{E_2E_6 - E_4^2}{2}
\text{.}
\end{align}
The study of equivalent forms of such differential equations actually predates Ramanujan. To the best of our knowledge, Jacobi was the first to prove in 1848 \cite{jacobi48} that his \emph{Thetanullwerte} satisfy a third order algebraic differential equation. In 1881 \cite{halphen81} Halphen found a simpler description of Jacobi's equation by considering logarithmic derivatives. Further, in 1911 \cite{chazy11} Chazy considered a third order differential equation\footnote{In Chazy's original notation (cf. \cite{chazy11} (4)) the equation he considered is written as $y''' = 2yy'' -3(y')^2$. If derivatives in this equation are with respect to a variable $t$, equation (\ref{chazyeq}) is obtained from this one by the change of variables $q=e^{2t}$.} satisfied by the Eisenstein series $E_2$:
\begin{align*} \label{chazyeq} \tag{C}
\theta^3E_2 = E_2\theta^2E_2 - \frac{3}{2}(\theta E_2)^2\text{.}
\end{align*}
We refer to \cite{ohyama95} for a thorough study of Jacobi's, Halphen's, and Chazy's equations, and the relations between them. We point out that Ramanujan's and Chazy's equations concern level 1 (quasi-)modular forms, whereas the equations of Jacobi and Halphen involve level 2 (quasi-)modular forms.

 A higher dimensional generalization of Jacobi's equation concerning \emph{Thetanullwerte} of complex abelian varieties of dimension $2$ was first given by Ohyama \cite{ohyama96} in 1996, and for any dimension by Zudilin \cite{zudilin00} in 2000 (see also \cite{BZ03}).

This paper, and its sequel, grew out from our attempt to obtain a more conceptual understanding of the Ramanujan equations and their higher dimensional extensions. This could possibly shed some light on their arithmetical and geometric properties. An important motivation for this program is the central role of the original Ramanujan equations (\ref{rameq}) and of the integrality properties of the series $E_2$, $E_4$, and $E_6$, in Nesterenko's celebrated result on the transcendence of their values, when regarded as holomorphic functions on the complex unit disc $D = \{z \in \CC \mid |z|<1\}$:

\begin{theorem}[Nesterenko \cite{nesterenko96} 1996]
For every $q \in D\setminus\{0\}$,
\begin{align*}
\trdeg_{\QQ}\QQ(q,E_2(q),E_4(q),E_6(q)) \ge 3\text{.}
\end{align*}
\end{theorem}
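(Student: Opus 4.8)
I would deduce the theorem from Nesterenko's algebraic independence criterion, established in the same paper \cite{nesterenko96}. Roughly, the criterion says that if a point $\omega \in \CC^m$ carries a sequence of nonzero polynomials $P_N \in \ZZ[X_1,\dots,X_m]$ whose degree $\deg P_N$ and logarithmic height $h(P_N)$ grow like fixed powers of a parameter $N$ and for which $|P_N(\omega)|$ is small enough to beat the Liouville-type lower bound that would hold were the transcendence degree below a threshold $t$, then $\trdeg_\QQ\QQ(\omega) \ge t$; its proof is an induction on the transcendence degree using elimination theory (resultants, Chow forms) to control ideals of $\QQ[X_1,\dots,X_m]$ taking small values at $\omega$. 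Applied with $m=4$ and $\omega = (q,E_2(q),E_4(q),E_6(q))$, the task is to produce such a sequence reaching $t=3$, and it is here that the integrality $E_2,E_4,E_6 \in \ZZ[\![q]\!]$ and the Ramanujan equations (\ref{rameq}) enter.

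First I would build an auxiliary function. The $q$-expansions have integer coefficients of at most exponential size (via $|a_n(E_k)| \ll n^{k}$), so the Taylor coefficients at $q=0$ of $q^{a_0}E_2(q)^{a_1}E_4(q)^{a_2}E_6(q)^{a_3}$ are integers with explicit bounds, and the Thue--Siegel box principle yields, for large $N$, a nonzero $P_N \in \ZZ[X_0,\dots,X_3]$ with $\deg P_N$ and $h(P_N)$ polynomially bounded in $N$ such that $F_N(q) \defeq P_N(q,E_2(q),E_4(q),E_6(q))$ vanishes at $q=0$ to a large order — one imposes $\asymp(\deg P_N)^4$ linear conditions on the $\asymp(\deg P_N)^4$ coefficients of $P_N$. (Choosing the exact balance of degree, height, and vanishing order so that the criterion can ultimately be invoked is one of the delicate points.)

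Next I would transport this vanishing to an interior point using the differential equations. Equations (\ref{rameq}) say precisely that $\ZZ[1/6][q,E_2,E_4,E_6]$ is stable under $\theta = q\frac{d}{dq}$, so each $\theta^j F_N$ equals $Q_{N,j}(q,E_2(q),E_4(q),E_6(q))$ for an explicit polynomial $Q_{N,j}$ of controllable degree and height. Fix $q_0$ with $0<|q_0|<1$; since the $E_k$ are bounded on a circle $|q|=r<1$, the Schwarz lemma forces $|P_N(\omega)| = |F_N(q_0)|$, and the values $|Q_{N,j}(\omega)|$ for $j$ in a controlled range, to be very small. That this smallness does not overshoot the Liouville barrier — which would contradict $F_N$ being a nonzero analytic function along the curve $q\mapsto(q,E_2(q),E_4(q),E_6(q))$ — is guaranteed by Nesterenko's \emph{multiplicity estimate} for the Ramanujan functions: a nonzero $P\in\CC[X_0,\dots,X_3]$ of total degree $d$ has $\operatorname{ord}_{q=0} P(q,E_2,E_4,E_6) \le c\,d^{4}$. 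Collecting, for each $N$, a well-chosen $Q_{N,j}$ and rewriting it in $\ZZ[1/6][X_0,\dots,X_3]$ produces the sequence the criterion demands, which then yields $\trdeg_\QQ\QQ(\omega)\ge 3$.

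I expect the main obstacle to be the two deep ingredients invoked above, which form the core of Nesterenko's work: the multiplicity estimate for the Ramanujan functions, proved through the \emph{$D$-property} of the $\theta$-stable ring generated by $E_2,E_4,E_6$ together with an intricate ideal-theoretic induction on degrees; and the algebraic independence criterion itself. By comparison, Siegel's lemma, the Schwarz lemma, and the propagation of degree and height bounds under $\theta$ are routine bookkeeping. In view of the length and delicacy of these two ingredients, one normally invokes Nesterenko's theorem as a black box.
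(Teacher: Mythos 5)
The paper does not prove this statement at all: it is Nesterenko's theorem, quoted from \cite{nesterenko96} purely as motivation for the geometric constructions that follow, so there is no internal argument to compare yours against. Your outline is, in substance, the strategy of Nesterenko's original proof, which is exactly what the citation points to: an auxiliary polynomial $P_N$ produced by Siegel's lemma from the integrality and growth of the $q$-expansion coefficients, smallness of $P_N(\omega)=F_N(q_0)$ via the Schwarz lemma, stability of $\ZZ[1/6][q,E_2,E_4,E_6]$ under $\theta$ (i.e.\ the equations (\ref{rameq})) to keep derivatives inside the same ring with controlled degree and height, the multiplicity estimate $\operatorname{ord}_{q=0}P(q,E_2,E_4,E_6)\le c\,(\deg P)^4$, and finally Nesterenko's algebraic independence criterion applied with $m=4$, $t=3$. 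Two caveats keep this a roadmap rather than a proof. First, the two ingredients you defer to are not side lemmas but the actual mathematical content of \cite{nesterenko96}; invoking them as black boxes proves nothing beyond what the citation already asserts --- which is acceptable here, since the present paper itself uses the theorem only as a black box. Second, your analytic step records only the upper bound: the criterion needs two-sided control of the values (an upper bound of the right exponential size \emph{and} a quantitative lower bound, or at least nonvanishing of a well-chosen $\theta^j F_N$ at $q_0$), and it is precisely in extracting the lower-bound side --- choosing the index $j$ via Cauchy estimates, using that the first nonzero Taylor coefficient of $F_N$ at $q=0$ is rational of controlled height, with the zero estimate capping the order of vanishing --- that the real delicacy of Nesterenko's argument lies; the sentence ``the multiplicity estimate guarantees the smallness does not overshoot the Liouville barrier'' compresses this into an assertion rather than an argument. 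If you intend to present a proof rather than a pointer to \cite{nesterenko96}, that is the step to expand.
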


In contrast with the concrete methods of Ohyama and Zudilin based on theta functions, our geometric approach allows us to construct by purely algebraic methods some higher dimensional avatars of the system (\ref{rameq}), involving suitable moduli spaces of abelian varieties that enjoy remarkable smoothness properties over $\ZZ$. Another important difference between our approach and that of Ohyama and Zudilin is that we work in ``level 1'', although it should be clear that we can also introduce higher level structures in the picture.

 We next explain our main results.

Fix an integer $g\ge 1$. Let $k$ be a field and $(X,\lambda)$ be a principally polarized abelian variety over $k$ of dimension $g$ (here $\lambda$ denotes a suitable isomorphism from $X$ onto the dual abelian variety $X^t$). Then the first algebraic de Rham cohomology $H^1_{\dR}(X/k)$ is a $k$-vector space of dimension $2g$ endowed with a canonical subspace $F^1(X/k)$ of dimension $g$ (given by the Hodge filtration) and a non-degenerate alternating $k$-bilinear form 
\begin{align*}
\langle \ , \ \rangle_{\lambda} : H^1_{\dR}(X/k) \times H^1_{\dR}(X/k) \to k
\end{align*}
induced by the principal polarization $\lambda$. By a \emph{symplectic-Hodge basis} of $(X,\lambda)$, we mean a basis $b=(\omega_1,\ldots,\omega_g,\eta_1,\ldots,\eta_g)$ of the $k$-vector space $H^1_{\dR}(X/k)$, such that
\begin{enumerate}
   \item each $\omega_i$ is in $F^1(X/k)$, and
   \item $b$ is symplectic with respect to $\langle \ , \ \rangle_{\lambda}$, that is, $\langle \omega_i,\omega_j \rangle_{\lambda} = \langle \eta_i,\eta_j \rangle_{\lambda} =0$ and $\langle \omega_i, \eta_j \rangle_{\lambda}=\delta_{ij}$ for every $1\le i,j\le g$. 
\end{enumerate} 
We may consider the moduli stack $\mathcal{B}_g$ classifying principally polarized abelian varieties of dimension $g$ equipped with a symplectic-Hodge basis; we prove that $\mathcal{B}_g$ is a smooth Deligne-Mumford stack over $\Spec \ZZ$ of relative dimension $2g^2 + g$. This stack is not representable by a scheme (or even an algebraic space).  However, we prove that $\mathcal{B}_g\tensor \ZZ[1/2]$ is representable by a smooth quasi-projective scheme $B_g$ over $\ZZ[1/2]$. This result relies essentially on a theorem of Oda (\cite{oda69} Corollary 5.11) relating $H^1_{\dR}(X/k)$ to the Dieudonné module associated to the $p$-torsion subscheme $X[p]$ when $k$ is a perfect field of characteristic $p$. 

The main result in this paper is a description of the tangent bundle $T_{\mathcal{B}_g/\ZZ}$ in terms of the first relative de Rham cohomology of the universal abelian scheme over the moduli stack $\mathcal{A}_g$ of principally polarized abelian varieties of dimension $g$ (see Theorem \ref{theorem1} for a precise statement). From this description, we construct a family $(v_{ij})_{1\le i \le j \le g}$ of $g(g+1)/2$ commuting vector fields over $\mathcal{B}_g$; these are the \emph{higher Ramanujan vector fields}. Concretely, if $(\omega_1,\ldots,\omega_g,\eta_1,\ldots,\eta_g)$ denotes the universal symplectic-Hodge basis over $\mathcal{B}_g$, and $\nabla$ denotes the Gauss-Manin connection on the first relative de Rham cohomology of the universal abelian scheme over $\mathcal{B}_g$, then for every $1\le i \le j \le g$ we have
\begin{enumerate}
   \item $\nabla_{v_{ij}}\omega_i = \eta_j$, $\nabla_{v_{ij}}\omega_j=\eta_i$, and $\nabla_{v_{ij}}\omega_k = 0$ for every  $k\not\in \{i,j\}$, 
  \item $\nabla_{v_{ij}}\eta_k = 0$ for every $1\le k\le g$,
\end{enumerate}
and these equations completely determine $v_{ij}$.

When $g=1$, we shall recall how $B_1$ may be identified, by means of the classical theory of elliptic curves, with an open subscheme of $\AA^3_{\ZZ[1/2]} = \Spec \ZZ[1/2,b_2,b_4,b_6]$. Under this isomorphism, the vector field $v_{11}$ gets identified with
\begin{align*}
2b_4\frac{\partial}{\partial b_2} + 3b_6\frac{\partial}{\partial b_4} + (b_2b_6-b_4^2)\frac{\partial}{\partial b_6}
\end{align*} 
which is, up to scaling, the vector field associated to Chazy's equation (\ref{chazyeq})\footnote{An integral curve of this vector field for the derivation $\theta$ is given by $q \mapsto (E_2(q),\frac{1}{2}\theta E_2(q), \frac{1}{6}\theta^2E_2(q))$.}. We also show that $B_1\tensor \ZZ[1/6]$ may be identified with the open subscheme $\Spec \ZZ[1/6,e_2,e_4,e_6,1/(e_4^3-e_3^2)]$ of $\AA^3_{\ZZ[1/6]}$, and that, under this isomorphism, the vector field $v_{11}$ gets identified with the ``original'' vector field associated to the Ramanujan equations (\ref{rameq}):
\begin{align*}
 \frac{e_2^2-e_4}{12}\frac{\partial}{\partial e_2} + \frac{e_2e_4-e_6}{3}\frac{\partial}{\partial e_4} + \frac{e_2e_6-e_4^2}{2}\frac{\partial}{\partial e_6}\text{.}
\end{align*}
A geometric description of the above vector field in terms of the universal elliptic curve and the Gauss-Manin connection on its de Rham cohomology has actually been given by Movasati in \cite{movasati08} (see also \cite{movasati12}), and this has been one of the starting points of our construction. Let us remark that this point of view was already implicitly contained in the concept of ``Serre derivative'' of modular forms (\cite{serre72} 1.4) and in its geometric interpretation given by Deligne (\cite{katz73} A1.4).

In the sequel of this paper, \emph{Higher Ramanujan equations II: periods of abelian varieties and transcendence questions}, we shall introduce analytic methods in our construction and we shall tackle some transcendence questions. We shall prove, for instance, that every leaf of the holomorphic foliation on the complex manifold $B_g(\CC)$ defined by the higher Ramanujan vector fields is Zariski-dense in $B_g$. We shall also construct a particular solution $\varphi_g$ to the differential equations defined by the higher Ramanujan vector fields that will constitute a higher dimensional generalization of the solution $q\mapsto (E_2(q),E_4(q),E_6(q))$ when $g=1$. Finally, we shall give a precise relation between the transcendence degree over $\QQ$ of values of $\varphi_g$ and Grothendieck's periods conjecture on abelian varieties.

We expect that the results in this paper, and in its sequel, might interest specialists in transcendental number theory. We have tried to keep prerequisites in abelian schemes and algebraic stacks to a minimum by recalling many notions and constructions that are well known to specialists in algebraic geometry, and by citing precise results in the (rather scarce) literature on these subjects.

\subsection{Acknowledgments}

This work is part of my PhD thesis under the supervision of Jean-Benoît Bost. I would like to thank him for suggesting me this research theme, and for his careful reading of the manuscript of this paper.

\subsection{Terminology and notations} \label{terminology}

\subsubsection{} \label{vectorbundles} By a \emph{vector bundle} over a scheme $U$ we mean a locally free sheaf $\mathcal{E}$ over $U$ of finite rank. A \emph{line bundle} is a vector bundle of rank $1$. A \emph{subbundle} of $\mathcal{E}$ is a subsheaf $\mathcal{F}$ of $\mathcal{E}$ such that $\mathcal{F}$ and $\mathcal{E}/\mathcal{F}$ are also vector bundles, that is, $\mathcal{F}$ is locally a direct factor of $\mathcal{E}$. If $\mathcal{E}$ has constant rank $r$, by a \emph{basis} of $\mathcal{E}$ over $U$ we mean an ordered family of $r$ global sections of $\mathcal{E}$ that generate this sheaf as an $\mathcal{O}_U$-module. The \emph{dual} of a vector bundle $\mathcal{E}$ is the vector bundle $\mathcal{E}^{\vee} \defeq \mathcal{H}om_{\mathcal{O}_U}(\mathcal{E},\mathcal{O}_U)$.

\subsubsection{} Let $U$ be a scheme. By an \emph{abelian scheme} over $U$, we mean a proper and smooth group scheme $p:X \to U$ over $U$ with geometrically connected fibers. The group law of $X$ over $U$ is commutative (cf. \cite{GIT94} Corollary 6.5) and will be denoted additively. A \emph{morphism of abelian schemes} over $U$ is a morphism of $U$-group schemes.

 When $p$ is projective, the relative Picard functor $\Pic_{X/U}$ is representable by a group scheme over $U$ (\cite{BLR90} Chapter 8). Then, the open group subscheme $X^t$ of $\Pic_{X/U}$, whose geometric points correspond to line bundles some power of which are algebraically equivalent to zero, is a projective abelian scheme over $U$, called the \emph{dual abelian scheme}; we denote its structural morphism by $p^t:X^t \to U$. There is a canonical biduality isomorphism $X \stackrel{\sim}{\to} X^{tt}$ (cf. \cite{BLR90} 8.4 Theorem 5). The formation of both the dual abelian scheme and the biduality isomorphism is compatible with every base change in $U$. The universal line bundle over $X\times_U X^t$, the so-called \emph{Poincaré line bundle}, will be denoted by $\mathcal{P}_{X/U}$.

A \emph{principal polarization} on a projective abelian scheme $X$ over $U$ is an isomorphism of $U$-group schemes $\lambda : X \to X^t$ satisfying the equivalent conditions (cf. \cite{GIT94} 6.2 and \cite{DP94} 1.4)
\begin{enumerate}
    \item $\lambda$ is symmetric (i.e. $\lambda=\lambda^t$ under the biduality isomorphism $X\cong X^{tt}$) and  $(\id_X,\lambda)^*\mathcal{P}_{X/U}$ is relatively ample over $U$.
   \item Étale locally over $U$, $\lambda$ is \emph{induced by a line bundle on $X$} (cf. \cite{GIT94} Definition 6.2) relatively ample over $U$. 
\end{enumerate}
A \emph{principally polarized abelian scheme} over $U$ is a couple $(X,\lambda)$, where $X$ is a projective abelian scheme over $U$ and $\lambda$ is a principal polarization on $X$.

\subsubsection{} If $X\to S$ is a smooth morphism of schemes, the dual $\mathcal{O}_X$-module of the sheaf of relative differentials $\Omega^1_{X/S}$ (i.e. the sheaf of $\mathcal{O}_S$-derivations of $\mathcal{O}_X$) is denoted by $T_{X/S}$.  It is a vector bundle over $X$ whose rank is given by the relative dimension of $X \to S$. If $S=\Spec R$ is affine, we denote $T_{X/S}=T_{X/R}$.

 The \emph{Lie bracket} $[ \ , \ ] : T_{X/S} \times T_{X/S} \to T_{X/S}$ is defined on derivations by $[\theta_1,\theta_2] = \theta_1\circ \theta_2 - \theta_2\circ \theta_1$. 

If $S$ is a scheme, and $f: X \to Y$ is a morphism of smooth $S$-schemes, then there is a canonical morphism of $\mathcal{O}_X$-modules $f^*\Omega^1_{Y/S} \to \Omega^1_{X/S}$. Further, as $Y\to S$ is smooth, the canonical morphism of $\mathcal{O}_X$-modules  $f^*T_{Y/S} \to (f^*\Omega^1_{Y/S})^{\vee}$ is an isomorphism. We denote by
\begin{align*}
Df: T_{X/S} \to f^*T_{Y/S}
\end{align*}
the dual $\mathcal{O}_X$-morphism of $f^*\Omega^1_{Y/S} \to \Omega^1_{X/S}$ after the identification $(f^*\Omega^1_{Y/S})^{\vee}\cong f^*T_{Y/S}$. If $f$ is smooth, we have an exact sequence of vector bundles over $X$
\begin{align*}
0 \to T_{X/Y} \to T_{X/S} \stackrel{Df}{\to} f^*T_{Y/S} \to 0\text{.}
\end{align*}

\subsubsection{} If $U$ is any scheme, the category of $U$-schemes (resp. $U$-group schemes) is denoted by $\Sch_{/U}$ (resp. $\textsf{GpSch}_{/U}$). The category of sets is denoted by $\Sets$. If $\textsf{C}$ is any category, its opposite category is denoted by $\textsf{C}^{\opp}$.

\subsubsection{} \label{stacks}

We shall use the language of \emph{categories fibered in groupoids} and the elements of the theory of \emph{Deligne-Mumford stacks}. We follow the same conventions and terminology of \cite{olsson16}. In particular, if $S$ is a scheme, whenever we talk about a \emph{stack} over the category of $S$-schemes $\Sch_{/S}$ (cf. \cite{olsson16} Definition 4.6.1), or simply a stack over $S$ (or an $S$-stack), we shall always assume that $\Sch_{/S}$ is endowed with the \emph{étale topology}.

In view of \cite{olsson16} Corollary 8.3.5, by an \emph{algebraic space} over a scheme $S$ we mean a Deligne-Mumford stack $\mathcal{X}$ over $S$ such that for any $S$-scheme $U$ the fiber category $\mathcal{X}(U)$ is discrete (i.e. any automorphism is the identity).

The \emph{étale site} of a Deligne-Mumford stack $\mathcal{X}$ is denoted by $\text{Ét}(\mathcal{X})$ (cf. \cite{olsson16} Paragraph 9.1). We recall that the objects of the underlying category of $\Et(\mathcal{X})$ are \emph{étale schemes over $\mathcal{X}$}, that is, pairs $(U,u)$ where $U$ is an $S$-scheme and $u: U \to \mathcal{X}$ is an étale $S$-morphism; morphisms are given by couples $(f,f^b): (U',u')\to (U,u)$, where $f$ is an $S$-morphism and $f^b :u' \to u\circ f$ is an isomorphism of functors $U' \to \mathcal{X}$. Coverings in $\Et(\mathcal{X})$ are given by families of morphisms $\{(f_i,f_i^{b}):(U_i,u_i) \to (U,u)\}_{i\in I}$ such that $\{f_i:U_i \to U\}_{i\in I}$ is an étale covering of $U$.

The structural sheaf on $\Et(\mathcal{X})$, which to any $(U,u)$ associates the ring $\Gamma(U,\mathcal{O}_U)$, is denoted by $\mathcal{O}_{\mathcal{X}_{\et}}$. We recall that an $\mathcal{O}_{\mathcal{X}_{\et}}$-module $\mathcal{F}$ is said to be \emph{quasi-coherent} if $u^*\mathcal{F}$ is a quasi-coherent $\mathcal{O}_U$-module for any object $(U,u)$ of $\Et(\mathcal{X})$.

By a \emph{vector bundle} over a Deligne-Mumford stack $\mathcal{X}$, we mean a locally free $\mathcal{O}_{\mathcal{X}_{\et}}$-module of finite rank. We define subbundles, bases, and duals as in \ref{vectorbundles}.

\subsubsection{} \label{tangentstacks}  Sheaves of differentials and tangent sheaves can also be defined for Deligne-Mumford stacks. If $\mathcal{X}$ is a Deligne-Mumford stack over $S$, we define a presheaf of $\mathcal{O}_{\mathcal{X}_{\et}}$-modules $\Omega^1_{\mathcal{X}/S}$ on $\Et(\mathcal{X})$ by 
\begin{align*}
\Gamma((U,u),\Omega^1_{\mathcal{X}/S}) \defeq \Gamma(U,\Omega^1_{U/S})
\end{align*}
for any étale scheme $(U,u)$ over $\mathcal{X}$; restriction maps are defined in the obvious way. Since, for any étale morphism of $S$-schemes $f:U' \to U$, the induced morphism $f^*\Omega^1_{U/S} \to \Omega^1_{U'/S}$ is an isomorphism of $\mathcal{O}_{U'}$-modules, and for any $S$-scheme $U$ the sheaf $\Omega^1_{U/S}$ is a quasi-coherent $\mathcal{O}_U$-module, we see that $\Omega^1_{\mathcal{X}/S}$ is in fact a quasi-coherent sheaf over $\mathcal{X}$ (cf. \cite{olsson16} Lemma 4.3.3). Note that $u^*\Omega^1_{\mathcal{X}/S} = \Omega^1_{U/S}$ for any étale scheme $(U,u)$ over $\mathcal{X}$.

Let $\varphi: \mathcal{X} \to \mathcal{Y}$ be a morphism of Deligne-Mumford stacks over $S$. If $\varphi$ is representable by schemes, then there exists a unique morphism of $\mathcal{O}_{\mathcal{Y}}$-modules $\Omega^1_{\mathcal{Y}/S} \to \varphi_*\Omega^1_{\mathcal{X}/S}$ inducing, for any étale scheme $(V,v)$ over $\mathcal{Y}$, the canonical morphism $\Omega^1_{V/S} \to \varphi'_*\Omega^1_{U/S}$, where $(U,u)$ (resp. $\varphi' :U \to V$) denotes the étale scheme over $\mathcal{X}$ (resp. the morphism of $S$-schemes) obtained from $(V,v)$ (resp. $\varphi$) by base change. If, moreover, $\varphi$ is quasi-compact and quasi-separated, by adjointness (cf. \cite{olsson16} Proposition 9.3.6), we obtain a morphism of $\mathcal{O}_{\mathcal{X}_{\et}}$-modules
\begin{align}\label{pullbackmorphism}
\varphi^*\Omega^1_{\mathcal{Y}/S} \to \Omega^1_{\mathcal{X}/S}\text{.}
\end{align}
We then define a quasi-coherent $\mathcal{O}_{\mathcal{X}_{\et}}$-module
\begin{align*}
\Omega^1_{\mathcal{X}/\mathcal{Y}} \defeq \text{coker} (\varphi^*\Omega^1_{\mathcal{Y}/S} \to \Omega^1_{\mathcal{X}/S})\text{.}
\end{align*}

If $\mathcal{X}$ is a smooth Deligne-Mumford stack over $S$ then $\Omega^1_{\mathcal{X}/S}$ is a vector bundle over $\mathcal{X}$. We define $T_{\mathcal{X}/S}$ as the dual $\mathcal{O}_{\mathcal{X}_{\et}}$-module of $\Omega^1_{\mathcal{X}_/S}$. If $\varphi: \mathcal{X} \to \mathcal{Y}$ is a morphism of smooth Deligne-Mumford stacks over $S$ representable by smooth schemes, then  $\Omega^1_{\mathcal{X}/\mathcal{Y}}$ is a vector bundle over $\mathcal{X}$, and its dual is denoted by $T_{\mathcal{X}/\mathcal{Y}}$. Moreover, in this case, the morphism in (\ref{pullbackmorphism}) is injective and induces a surjective morphism of $\mathcal{O}_{\mathcal{X}_{\et}}$-modules $D\varphi : T_{\mathcal{X}/S} \to \varphi^*T_{\mathcal{Y}/S}$. We thus obtain an exact sequence of quasi-coherent $\mathcal{O}_{\mathcal{X}_{\et}}$-modules
\begin{align*}
0 \to T_{\mathcal{X}/\mathcal{Y}} \to T_{\mathcal{X}/S} \stackrel{D\varphi}{\to}\varphi^*T_{\mathcal{Y}/S} \to 0\text{.} 
\end{align*}

\section{Symplectic-Hodge bases} \label{shb}

We start this section by recalling the definition of the de Rham cohomology of an abelian scheme and its main properties. We next explain how to associate to a principal polarization on an abelian scheme a symplectic structure on its first de Rham cohomology. This leads us to the definition of symplectic-Hodge bases. 

\subsection{De Rham cohomology of abelian schemes}

Let $p:X \to U$ be an abelian scheme of relative dimension $g$.

 We recall that, for any integer $i\ge0$, the \emph{$i$-th de Rham cohomology} sheaf of $\mathcal{O}_U$-modules associated to $p$ is defined as the $i$-th left hyperderived functor of $p_*$ applied to the complex of relative differential forms $\Omega^{\bullet}_{X/U}$: 
\begin{align*}
H^i_{\dR}(X/U) \defeq \mathbf{R}^ip_*\Omega^{\bullet}_{X/U}\text{.}
\end{align*}
If $F:X \to Y$ is a morphism of abelian schemes over $U$, we denote by $F^* : H^i_{\dR}(Y/U) \to H^i_{\dR}(X/U)$ the induced $\mathcal{O}_U$-morphism on cohomology.

One can prove that there is a canonical isomorphism given by cup product
\begin{align*}
{\bigwedge}^i H^1_{\dR}(X/U) \stackrel{\sim}{\longrightarrow} H^i_{\dR}(X/U)\text{,} 
\end{align*}
and that $H^1_{\dR}(X/U)$ is a vector bundle over $U$ of rank $2g$. Moreover, the canonical $\mathcal{O}_U$-morphism $p_*\Omega^1_{X/U} \to H^1_{\dR}(X/U)$ induces an isomorphism of $p_*\Omega^1_{X/U}$ with a rank $g$ subbundle of $H^1_{\dR}(X/U)$, its \emph{Hodge subbundle} $F^1(X/U)$. It fits into a canonical exact sequence of $\mathcal{O}_U$-modules:
\begin{align} \label{hodgefiltr}
0 \longrightarrow F^1(X/U) \longrightarrow H^1_{\dR}(X/U) \longrightarrow R^1p_*\mathcal{O}_X \longrightarrow 0\text{.}
\end{align}
The formation of $H^1_{\dR}(X/U)$, $F^1(X/U)$, $R^1p_*\mathcal{O}_X$, and the above exact sequence is compatible with every base change in $U$.

 For a proof of all these facts, the reader may consult  \cite{BBM82} 2.5.

\subsection{Symplectic form associated to a principal polarization} \label{shb2}

 Let $p:X \to U$ be a projective abelian scheme of relative dimension $g$ and $\lambda :X \to X^t$ be a principal polarization.  In this paragraph, we recall how to associate to $\lambda$ a canonical \emph{symplectic} $\mathcal{O}_U$-bilinear form 
\begin{align*}
\langle \ , \ \rangle_{\lambda} : H^1_{\dR}(X/U) \times H^1_{\dR}(X/U) \longrightarrow \mathcal{O}_U\text{.}
\end{align*}
We refer to Appendix \ref{sympl} for basic definitions and terminology concerning symplectic forms on vector bundles over schemes.

Recall that to any line bundle $\mathcal{L}$ on $X$ we can associate its \emph{first Chern class in de Rham cohomology} $c_{1,\dR}(\mathcal{L})$, namely the global section of $H^2_{\dR}(X/U)$ given by the image of the class of the line bundle $\mathcal{L}$ under the morphism of $\mathcal{O}_U$-modules
\begin{align*}
R^1p_*\mathcal{O}_X^{\times} \to \mathbf{R}^1p_*\Omega^{\bullet}_{X/U}[1]\cong H^2_{\dR}(X/U)
\end{align*}
induced by $\dlog : \mathcal{O}_{X}^{\times} \to \Omega^{\bullet}_{X/U}[1]$.\footnote{We adopt the same sign conventions of \cite{BBM82} 0.3 for the differentials of the shifted complex $\Omega^{\bullet}_{X/U}[1]$ and for the isomorphism $\mathbf{R}^1p_*\Omega^{\bullet}_{X/U}[1]\cong H^2_{\dR}(X/U)$.}

We apply the above construction to the Poincaré line bundle $\mathcal{P}_{X/U}$ on the projective abelian scheme $X\times_U X^t$ over $U$. Let 
\begin{align*}
\phi_{X/U} : H^1_{\dR}(X/U)^{\vee} \longrightarrow H^1_{\dR}(X^t/U)
\end{align*}
be the morphism of  $\mathcal{O}_U$-modules given by the image of $c_{1,\dR}(\mathcal{P}_{X/U})$ in the Künneth component  $H^1_{\dR}(X/U) \tensor_{\mathcal{O}_U}H^1_{\dR}(X^t/U)$ of $H^2_{\dR}(X/U)$, and consider the  isomorphism of $\mathcal{O}_U$-modules 
\begin{align*}
\lambda^*: H^1_{\dR}(X^t/U) \to H^1_{\dR}(X/U)
\end{align*}
induced by the principal polarization $\lambda : X \to X^t$. For any sections $\gamma$ and $\delta$ of $H^1_{\dR}(X/U)^{\vee}$, we put
\begin{align*}
Q_{\lambda}(\gamma,\delta) \defeq \delta\circ\lambda^*\circ \phi_{X/U}(\gamma)\text{.}
\end{align*}

It is clear that $Q_{\lambda}$ defines an $\mathcal{O}_U$-bilinear form over $H^1_{\dR}(X/U)^{\vee}$. By \cite{BBM82} 5.1.3.1, $\phi_{X/U}$ is in fact an isomorphism; in particular, $Q_{\lambda}$ is non-degenerate. By duality, we can thus define a non-degenerate bilinear form $\langle \ , \ \rangle_{\lambda}$ over $H^1_{\dR}(X/U)$ via
\begin{align*}
\langle Q_{\lambda}( \gamma , \ ), Q_{\lambda}( \delta , \ )\rangle_{\lambda} \defeq Q_{\lambda}(\gamma,\delta)\text{,}
\end{align*}
where we identified $H^1_{\dR}(X/U)^{\vee \vee}$ with $H^1_{\dR}(X/U)$. 

\begin{lemma} \label{alternating}
The non-degenerate bilinear form $\langle \ , \ \rangle_{\lambda}$ is alternating, thus symplectic.
\end{lemma}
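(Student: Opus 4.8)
The plan is to reduce the statement to a local computation on the de Rham cohomology of an abelian scheme, using that both the formation of $\langle\ ,\ \rangle_\lambda$ and the property of being alternating are compatible with base change, and that whether a bilinear form on a vector bundle is alternating can be checked étale-locally. Since $\langle\ ,\ \rangle_\lambda$ is already known to be non-degenerate, it suffices to show that $\langle x, x\rangle_\lambda = 0$ for every local section $x$ of $H^1_{\dR}(X/U)$; equivalently, via the identification $H^1_{\dR}(X/U)^{\vee\vee}\cong H^1_{\dR}(X/U)$ and the defining formula $\langle Q_\lambda(\gamma,\ ), Q_\lambda(\delta,\ )\rangle_\lambda \defeq Q_\lambda(\gamma,\delta)$, it suffices to show that the bilinear form $Q_\lambda$ on $H^1_{\dR}(X/U)^{\vee}$ is alternating, i.e. that $Q_\lambda(\gamma,\gamma)=\gamma\circ\lambda^*\circ\phi_{X/U}(\gamma)=0$ for every local section $\gamma$.

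First I would unwind $Q_\lambda$ into its two constituents: the polarization isomorphism $\lambda^*:H^1_{\dR}(X^t/U)\to H^1_{\dR}(X/U)$, and the map $\phi_{X/U}:H^1_{\dR}(X/U)^\vee\to H^1_{\dR}(X^t/U)$ coming from the Künneth component of $c_{1,\dR}(\mathcal{P}_{X/U})$ in $H^1_{\dR}(X/U)\tensor_{\mathcal{O}_U}H^1_{\dR}(X^t/U)$. The key point is that the composite $\lambda^*\circ\phi_{X/U}:H^1_{\dR}(X/U)^\vee\to H^1_{\dR}(X/U)$ is, up to sign, the map induced by the Künneth component of $c_{1,\dR}\big((\id_X,\lambda)^*\mathcal{P}_{X/U}\big)=c_{1,\dR}(\mathcal{L})$ for the symmetric line bundle $\mathcal{L}$ defining $\lambda$ — using functoriality of de Rham first Chern classes under pullback by $(\id_X,\lambda):X\to X\times_U X^t$ and the compatibility of $\lambda^*$ with the Künneth decomposition. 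Thus $Q_\lambda$ is identified with the bilinear form $H^1_{\dR}(X/U)^\vee\times H^1_{\dR}(X/U)^\vee\to\mathcal{O}_U$ obtained by pairing against the element $c_{1,\dR}(\mathcal{L})\in H^1_{\dR}(X/U)\tensor H^1_{\dR}(X/U)\subset H^2_{\dR}(X/U)=\bigwedge^2 H^1_{\dR}(X/U)$. Since $c_{1,\dR}(\mathcal{L})$ lands in $\bigwedge^2 H^1_{\dR}(X/U)$, its image under the symmetrization (the Künneth-diagonal) is skew, which forces the associated bilinear form to be alternating; equivalently, contracting a decomposable $2$-form $\alpha\wedge\beta$ twice against the same vector gives $\langle\gamma,\alpha\rangle\langle\gamma,\beta\rangle-\langle\gamma,\beta\rangle\langle\gamma,\alpha\rangle=0$. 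This is where I would invoke \cite{BBM82} 5.1.3.1 (that $\phi_{X/U}$ is an isomorphism and the description of $c_{1,\dR}(\mathcal{P}_{X/U})$) together with the standard fact that $c_{1,\dR}$ of a line bundle, being pulled back along $\dlog$ through $R^1p_*\mathcal{O}_X^\times$, is symmetric/antisymmetric in the appropriate sense under the Künneth involution coming from $\mathcal{P}$ being symmetric.

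The main obstacle I expect is bookkeeping of signs and the precise identification of $\lambda^*\circ\phi_{X/U}$ with contraction against $c_{1,\dR}(\mathcal{L})$: one must carefully track the sign conventions adopted from \cite{BBM82} 0.3 for the shifted complex $\Omega^\bullet_{X/U}[1]$ and for the isomorphism $\mathbf{R}^1p_*\Omega^\bullet_{X/U}[1]\cong H^2_{\dR}(X/U)$, as well as the symmetry property $\sigma^*\mathcal{P}_{X/U}\cong\mathcal{P}_{X^t/U}$ under the swap $\sigma:X\times_U X^t\to X^t\times_U X$ and the biduality isomorphism, which is what ultimately makes the Künneth component of $c_{1,\dR}(\mathcal{P}_{X/U})$ compatible with $\lambda=\lambda^t$. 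An alternative, perhaps cleaner, route that avoids some of this is to pass to an algebraically closed field by base change (allowed, since being alternating is checked fiberwise on a vector bundle and the form's formation commutes with base change) and there compare $\langle\ ,\ \rangle_\lambda$ with the cup-product pairing $H^1_{\dR}(X/k)\times H^1_{\dR}(X/k)\to H^2_{\dR}(X/k)\to k$ composed with a trace, which is manifestly alternating on $H^1$ since cup product is graded-commutative on odd-degree classes; the polarization enters only to rescale this pairing, preserving the alternating property. I would carry out the base-change reduction first, then run the cup-product comparison, keeping the \cite{BBM82} sign conventions only where unavoidable.
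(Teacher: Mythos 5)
Your overall strategy is the paper's: reduce to showing that $Q_\lambda$ is alternating, pass étale-locally to the case where $\lambda$ is induced by a relatively ample line bundle $\mathcal{L}$, and identify $Q_\lambda$ with the alternating form $(\gamma,\delta)\mapsto \gamma\wedge\delta\big(c_{1,\dR}(\mathcal{L})\big)$ on $H^1_{\dR}(X/U)^\vee$ (the paper delegates this identification to \cite{DP94}, Section 1). However, the mechanism you propose for the identification has a genuine gap. The form $Q_\lambda$ is given by the tensor $T \defeq (\id\tensor\lambda^*)\big(c_{1,\dR}(\mathcal{P}_{X/U})_{(1,1)}\big)$ in $H^1_{\dR}(X/U)\tensor H^1_{\dR}(X/U)$, and the lemma asserts precisely that $T$ is antisymmetric (indeed alternating). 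Pulling back along the graph morphism $(\id_X,\lambda):X\to X\times_U X^t$, as you suggest, computes $\Delta^*\big((\id_X\times\lambda)^*c_{1,\dR}(\mathcal{P}_{X/U})\big)$, and since the $(2,0)$ and $(0,2)$ Künneth components of $c_{1,\dR}(\mathcal{P}_{X/U})$ vanish, this is exactly the image of $T$ under the cup-product (antisymmetrization) map $H^1\tensor H^1\to \bigwedge^2 H^1\cong H^2_{\dR}(X/U)$. That map kills the symmetric part of $T$, so knowing $c_{1,\dR}\big((\id_X,\lambda)^*\mathcal{P}_{X/U}\big)$ only identifies the \emph{alternating part} of $Q_\lambda$ with (a multiple of) the $c_{1,\dR}(\mathcal{L})$-form; it cannot show the symmetric part vanishes, which is the whole content of the lemma. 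The actual verification (as in \cite{DP94}) must be done on $X\times_U X$: one uses $(\id_X\times\lambda)^*\mathcal{P}_{X/U}\cong m^*\mathcal{L}\tensor p_1^*\mathcal{L}^{-1}\tensor p_2^*\mathcal{L}^{-1}$ and the primitivity of degree-one classes ($m^*\alpha=p_1^*\alpha+p_2^*\alpha$) to compute the full $(1,1)$-Künneth component, not merely its image in $\bigwedge^2$. (Also, as a side inaccuracy, $(\id_X,\lambda)^*\mathcal{P}_{X/U}\cong\mathcal{L}\tensor[-1]^*\mathcal{L}$, not $\mathcal{L}$; this factor is harmless for ``alternating'' but the identity you assert is false as stated.)

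Your proposed alternative route does not close this gap either. Appealing to $\sigma^*\mathcal{P}_{X/U}\cong\mathcal{P}_{X^t/U}$ and $\lambda=\lambda^t$ yields only antisymmetry, $Q_\lambda(\gamma,\delta)=-Q_\lambda(\delta,\gamma)$, which is strictly weaker than alternating here because the lemma is over an arbitrary base, in particular in characteristic $2$. Moreover, ``being alternating is checked fiberwise'' fails over a non-reduced base (one would need to reduce to the universal, hence reduced, situation and say so), and for $g>1$ the cup product $H^1_{\dR}\times H^1_{\dR}\to H^2_{\dR}$ is not scalar-valued, so the polarization does not merely ``rescale'' it: producing a scalar-valued pairing and comparing it with $\langle\ ,\ \rangle_\lambda$ is exactly the nontrivial identification you were trying to avoid.
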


\begin{proof}
It suffices to prove that $Q_{\lambda}$ is alternating. Since $\lambda$ is a polarization, it is étale locally over $U$ induced by a line bundle $\mathcal{L}$ over $X$ relatively ample over $U$. We consider the first Chern class $c_{1,\dR}(\mathcal{L})$ in $H_{\dR}^2(X/U) \cong \bigwedge^2 H_{\dR}^1(X/U)$. Then, one can verify that $Q_{\lambda}$ defined above coincides with the alternating form 
\begin{align*}
(\gamma, \delta) \mapsto \gamma \wedge \delta ( c_{1,\dR}(\mathcal{L}))\text{.}
\end{align*}
We refer to \cite{DP94}, Section 1, for further details. 
\end{proof}

Thus we obtain a symplectic vector bundle $(H^1_{\dR}(X/U),\langle \ , \ \rangle_{\lambda})$ over $U$ in the sense of Definition \ref{defisymplbundle}.

\begin{lemma} \label{f1lagrangian}
$F^1(X/U)$ is a Lagrangian subbundle of $H^1_{\dR}(X/U)$ with respect to the symplectic form $\langle \ , \ \rangle_{\lambda}$.
\end{lemma}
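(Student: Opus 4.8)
The statement is that $F^1(X/U)$ is Lagrangian, i.e. it is isotropic for $\langle\ ,\ \rangle_\lambda$ and of rank equal to half the rank of $H^1_{\dR}(X/U)$. The rank condition is immediate: $F^1(X/U)$ has rank $g$ and $H^1_{\dR}(X/U)$ has rank $2g$, so a subbundle is Lagrangian precisely when it is isotropic (since a maximal isotropic subbundle of a rank-$2g$ symplectic bundle has rank $g$, and conversely a rank-$g$ isotropic subbundle is automatically maximal, hence Lagrangian). So the whole content is the vanishing $\langle \omega, \omega'\rangle_\lambda = 0$ for all local sections $\omega, \omega'$ of $F^1(X/U)$.

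The approach I would take is to reduce, by base change (the formation of everything in sight commutes with base change in $U$) and the étale-local nature of the assertion, to the situation where $\lambda$ is induced by a relatively ample line bundle $\mathcal{L}$ on $X$. Then, by the computation recorded in the proof of Lemma~\ref{alternating}, the form $Q_\lambda$ on $H^1_{\dR}(X/U)^\vee$, and hence dually the form $\langle\ ,\ \rangle_\lambda$ on $H^1_{\dR}(X/U)$, is given by cup product against $c_{1,\dR}(\mathcal{L}) \in H^2_{\dR}(X/U)$: concretely $\langle \alpha,\beta\rangle_\lambda$ is (up to sign) the image of $\alpha\wedge\beta\wedge\text{(stuff)}$ under the trace/evaluation map, or more precisely one works with the identification $H^2_{\dR}(X/U)\cong\bigwedge^2 H^1_{\dR}(X/U)$ and pairs with $c_{1,\dR}(\mathcal L)$ viewed in $\bigwedge^2(H^1_{\dR})^\vee \cong \bigwedge^{2g-2}H^1_{\dR}$ after the appropriate Poincaré-duality identification. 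The key point is then: the image of $c_{1,\dR}(\mathcal{L})$ under $\bigwedge^2 H^1_{\dR}(X/U) \to \bigwedge^2 (H^1_{\dR}/F^1)$, i.e. the $(0,2)$-Hodge component, vanishes; equivalently, $c_{1,\dR}(\mathcal{L})$ lies in the subbundle $F^1\wedge H^1_{\dR}(X/U)$ (the piece of the Hodge filtration on $H^2$ usually written $F^1 H^2_{\dR}$, which is $F^2 H^2_{\dR}\oplus (F^1\otimes\overline{F^1}\text{-part})$). This is precisely the de Rham incarnation of the classical fact that the first Chern class of a line bundle is a $(1,1)$-class. Pairing an element of $F^1\wedge H^1_{\dR}$ against $\omega\wedge\omega'$ with $\omega,\omega'\in F^1$ forces the result to involve $\bigwedge^3 F^1$ inside $\bigwedge^3 H^1_{\dR}$ evaluated against a top form — which is zero for rank reasons once $g$... wait, this needs care; the clean way is to argue directly that $\langle\omega,\omega'\rangle_\lambda$ is computed by a cup product $\omega\cup\omega'\cup(\ldots)$ landing in a space which, because $\omega,\omega'\in F^1 = p_*\Omega^1_{X/U}$, can be represented by an honest global relative $2$-form, and then invoke that the relevant trace vanishes on $F^{\ge 1}$ appropriately — so I would instead phrase it via the compatibility of $c_{1,\dR}$ with the Hodge filtration, citing \cite{BBM82} or \cite{DP94}.

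So, concretely, the steps in order: (1) reduce to $\lambda$ induced by a relatively ample $\mathcal{L}$, étale-locally on $U$, and recall from Lemma~\ref{alternating} that $Q_\lambda$ and dually $\langle\ ,\ \rangle_\lambda$ are given by cup product with $c_{1,\dR}(\mathcal{L})$; (2) observe that $c_{1,\dR}(\mathcal{L})$, being the image of $\dlog$ of a line bundle, lies in the first step of the Hodge filtration on $H^2_{\dR}(X/U)$ — this is the statement that $c_{1,\dR}(\mathcal L)$ comes from $R^1p_*\Omega^1_{X/U}\subset H^2_{\dR}$, which is formal from the definition since $\dlog$ factors through $\Omega^1_{X/U}[-1]\to\Omega^\bullet_{X/U}$, and cite \cite{BBM82} 2.5 or \cite{DP94}; (3) translate: under $H^2_{\dR}\cong\bigwedge^2 H^1_{\dR}$ the subspace $F^1 H^2_{\dR}$ is $F^1\wedge H^1_{\dR}(X/U)$, hence $c_{1,\dR}(\mathcal L)\in F^1\wedge H^1_{\dR}$; (4) conclude that for $\omega,\omega'$ sections of $F^1$, $\langle\omega,\omega'\rangle_\lambda$ is obtained by pairing $\omega\wedge\omega'\in\bigwedge^2 F^1$ against a class in $F^1\wedge H^1_{\dR}$ via the perfect pairing $\bigwedge^2 H^1_{\dR}\times\bigwedge^{2g-2}H^1_{\dR}\to\bigwedge^{2g}H^1_{\dR}\cong\mathcal O_U$ — but the relevant identification sends $F^1\wedge H^1_{\dR}$ into the annihilator of $\bigwedge^2 F^1$ because $F^1$ is a rank-$g$ subbundle of a rank-$2g$ bundle and $\dim(\bigwedge^2 F^1) + \dim(F^1\wedge H^1) > \binom{2g}{2}$-complement bookkeeping actually gives vanishing of the pairing; (5) since the vanishing holds étale-locally on $U$ and the form is globally defined, $F^1(X/U)$ is isotropic, hence — being of rank $g$ in a rank-$2g$ symplectic bundle — Lagrangian.

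\textbf{Main obstacle.} The delicate point is step (4): making precise and correct the linear-algebra identification that shows pairing $\bigwedge^2 F^1$ against $F^1\wedge H^1_{\dR}$ vanishes, together with getting the signs and the precise form of the cup-product description of $\langle\ ,\ \rangle_\lambda$ right (the $\dlog$ sign conventions of \cite{BBM82} 0.3 intervene here). In practice I expect the cleanest route is not the bare linear algebra but to cite \cite{DP94} Section~1, where exactly this compatibility — that $F^1(X/U)$ is totally isotropic for the polarization pairing, equivalently that $c_{1,\dR}$ respects the Hodge filtration — is worked out; the remaining task is then only the trivial observation that a rank-$g$ isotropic subbundle of a rank-$2g$ symplectic vector bundle is Lagrangian, which follows from the definition in Appendix~\ref{sympl} (a maximal isotropic subbundle of a symplectic bundle of rank $2g$ has rank $g$, and any isotropic subbundle of rank $g$ is necessarily maximal).
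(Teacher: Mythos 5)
Your overall strategy is the same as the paper's in outline: reduce the Lagrangian claim to isotropy via the rank count (this is exactly Corollary \ref{corosympl}), and then deduce isotropy of $F^1(X/U)$ from the compatibility of the polarization pairing with the Hodge filtration. The paper implements the second step differently and more directly: it does not pass to a line bundle at all, but invokes the commutative diagram of \cite{BBM82} Lemme 5.1.4, which says that $\phi_{X/U}$ carries the sub-object $(R^1p_*\mathcal{O}_X)^{\vee}$ (the annihilator of $F^1(X/U)$ in $H^1_{\dR}(X/U)^{\vee}$, by the exact sequence (\ref{hodgefiltr})) into $F^1(X^t/U)$; since $\lambda^*$ preserves Hodge subbundles, $\lambda^*\circ\phi_{X/U}$ maps the annihilator of $F^1$ into $F^1$, and isotropy of $F^1$ for the dual form $\langle\ ,\ \rangle_{\lambda}$ is then immediate. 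Your étale-local reduction to $\lambda$ induced by an ample $\mathcal{L}$ and the observation that $c_{1,\dR}(\mathcal{L})$ lies in $F^1H^2_{\dR}\cong F^1\wedge H^1_{\dR}$ are fine, and your fallback of citing \cite{DP94} Section 1 would be an acceptable substitute for the paper's citation of \cite{BBM82}.

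The genuine gap is your step (4). What Lemma \ref{alternating} (and your step (1)) gives is a cup-product description of $Q_{\lambda}$ on the \emph{dual} bundle $H^1_{\dR}(X/U)^{\vee}$, namely $Q_{\lambda}(\gamma,\delta)=(\gamma\wedge\delta)(c)$ with $c=c_{1,\dR}(\mathcal{L})$; the form $\langle\ ,\ \rangle_{\lambda}$ on $H^1_{\dR}(X/U)$ is only defined by transport along $\gamma\mapsto Q_{\lambda}(\gamma,\ )$, and it is \emph{not} simply ``cup product against $c$'' again. The pairing you write, of $\bigwedge^2F^1$ against ``a class in $F^1\wedge H^1_{\dR}$'' through $\bigwedge^2H^1_{\dR}\times\bigwedge^{2g-2}H^1_{\dR}\to\bigwedge^{2g}H^1_{\dR}$, does not typecheck without specifying a Poincaré-duality identification (which itself carries real content, e.g. that $c^{g}/g!$ trivializes $\bigwedge^{2g}H^1_{\dR}$ because the polarization is principal), and ``dimension bookkeeping'' is not an argument for the claimed annihilation. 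The correct completion of your route is short: since $c$ is a section of $F^1\wedge H^1_{\dR}$, the map $\phi\colon\gamma\mapsto\iota_{\gamma}c$ sends the annihilator of $F^1$ into $F^1$ (contract $\omega\wedge\alpha$ with $\gamma$ killing $F^1$: only the $\gamma(\alpha)\,\omega$ term survives); by injectivity of $\phi$ and equality of ranks this gives $\phi^{-1}(F^1)=\mathrm{Ann}(F^1)$, and then for sections $\omega,\omega'$ of $F^1$ one has $\langle\omega,\omega'\rangle_{\lambda}=Q_{\lambda}(\phi^{-1}\omega,\phi^{-1}\omega')=(\phi^{-1}\omega')(\omega)=0$. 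With that substitution (or with the citation you propose), your proof closes; as written, the decisive step is missing.
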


\begin{proof}
Since the rank of $H^1_{\dR}(X/U)$ is $2g$, and $F^1(X/U)$ is a rank $g$ subbundle of $H^1_{\dR}(X/U)$, it suffices to prove that $F^1(X/U)$ is isotropic with respect to $\langle \ , \ \rangle_{\lambda}$ (cf. Corollary \ref{corosympl}). This follows immediately from the compatibility of $\phi_{X/U}$ with the exact sequence (\ref{hodgefiltr}), that is, from the existence of canonical morphisms $\phi_{X/U}^0$ and $\phi_{X/U}^1$ making the diagram
$$
  \raisebox{-0.5\height}{\includegraphics{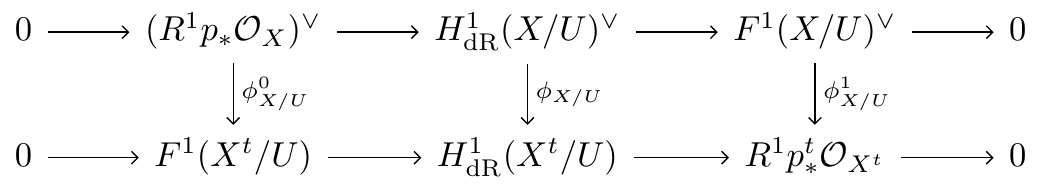}}
$$
commute (\cite{BBM82} Lemme 5.1.4; the morphisms $\phi_{X/U}^0$ and $\phi_{X/U}^1$ are uniquely determined by this commutative diagram, and are isomorphisms).
\end{proof}

\begin{obs} \label{remarkbasechange}
It is clear from the above construction that the formation of the symplectic form $\langle \ , \ \rangle_{\lambda}$ is compatible with base change. Namely, if $f:U' \to U$ is a morphism of schemes, and $(X',\lambda')$ denotes the principally polarized abelian scheme over $U'$ obtained by base change via $f$, then $f^*\langle \ , \ \rangle_{\lambda}$ coincides with $\langle \ , \ \rangle_{\lambda'}$ under the base change isomorphism $f^*H^1_{\dR}(X/U) \stackrel{\sim}{\to} H^1_{\dR}(X'/U')$. 
\end{obs}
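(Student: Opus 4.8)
The plan is to unwind the construction of $\langle \ , \ \rangle_{\lambda}$ step by step and to check at each stage that the ingredient in play is compatible with base change; the assertion will then follow formally. Fix a morphism $f: U' \to U$ and write $(X',\lambda')$ for the principally polarized abelian scheme over $U'$ deduced from $(X,\lambda)$ by base change. I will repeatedly invoke the canonical base change isomorphism $f^* H^i_{\dR}(X/U) \xrightarrow{\sim} H^i_{\dR}(X'/U')$ --- which exists, and is functorial in the abelian scheme, by the properties of de Rham cohomology recalled at the beginning of this section --- together with its analogues for $X^t$ and for products such as $X \times_U X^t$. I will also use freely that forming duals, exterior powers, and Künneth components of vector bundles commutes with pullback along $f$.

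First I would record that the dual abelian scheme, the biduality isomorphism, and the Poincaré line bundle are compatible with base change (as recalled in the discussion of abelian schemes above), so that $\mathcal{P}_{X'/U'}$ is the pullback of $\mathcal{P}_{X/U}$ along $X' \times_{U'} (X')^t = (X\times_U X^t)\times_U U'$. By naturality of $\dlog$ and of the hyper-derived direct image, the first Chern class in de Rham cohomology is functorial under such pullbacks, whence $c_{1,\dR}(\mathcal{P}_{X'/U'})$ is carried to $c_{1,\dR}(\mathcal{P}_{X/U})$ by the base change isomorphism on $H^2_{\dR}$. Since the Künneth decomposition of $H^2_{\dR}$ of a product is base change compatible (being induced by cup product), extracting the relevant Künneth component shows that $\phi_{X'/U'}$ is obtained from $f^*\phi_{X/U}$ after composition with the base change isomorphisms for $H^1_{\dR}(X/U)^{\vee}$ and $H^1_{\dR}(X^t/U)$. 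On the other hand $(\lambda')^*$ corresponds to $f^*(\lambda^*)$ under the base change isomorphisms, since $\lambda' = \lambda\times_U U'$ and $H^1_{\dR}$ is functorial in the abelian scheme. Feeding these two facts into the formula $Q_{\lambda}(\gamma,\delta) = \delta\circ\lambda^*\circ\phi_{X/U}(\gamma)$, I would conclude that $f^*Q_{\lambda}$ agrees with $Q_{\lambda'}$ under the base change isomorphism $f^*\bigl(H^1_{\dR}(X/U)^{\vee}\bigr) \xrightarrow{\sim} H^1_{\dR}(X'/U')^{\vee}$.

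It then remains to pass from $Q_{\lambda}$ to $\langle \ , \ \rangle_{\lambda}$. By \cite{BBM82} 5.1.3.1 the form $Q_{\lambda}$ is non-degenerate, and $\langle \ , \ \rangle_{\lambda}$ is by definition the transport of $Q_{\lambda}$ along the isomorphism $H^1_{\dR}(X/U)^{\vee} \xrightarrow{\sim} H^1_{\dR}(X/U)$, $\gamma \mapsto Q_{\lambda}(\gamma,-)$, after the double-dual identification. This transport-of-structure operation is natural, and the double-dual identification is base change compatible because $H^1_{\dR}(X/U)$ is a vector bundle; so the compatibility of $Q_{\lambda}$ with $f$ just obtained yields $f^*\langle \ , \ \rangle_{\lambda} = \langle \ , \ \rangle_{\lambda'}$ under the base change isomorphism $f^*H^1_{\dR}(X/U) \xrightarrow{\sim} H^1_{\dR}(X'/U')$, as desired. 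I do not expect any genuine obstacle here: the whole argument is bookkeeping with base change isomorphisms. The only two points that deserve attention are the functoriality of $c_{1,\dR}$ under pullback of the ambient abelian scheme --- not stated explicitly above, but immediate from its definition through $\dlog$ --- and the verification that the auxiliary identifications used in the construction (the isomorphism $H^2_{\dR} \cong \bigwedge^2 H^1_{\dR}$, the Künneth component of $H^2_{\dR}$ of a product, and the double dual) are themselves base change compatible, each of these reducing to the base change compatibility of cup product or to the local freeness of $H^1_{\dR}(X/U)$.
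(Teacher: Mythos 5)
Your argument is correct and is exactly the bookkeeping that the paper leaves implicit: the remark is stated in the paper with no proof beyond ``clear from the above construction,'' and your step-by-step verification (base change compatibility of the Poincaré bundle, of $c_{1,\dR}$ via $\dlog$, of the Künneth component, of $\lambda^*$, hence of $Q_{\lambda}$, and finally the transport of structure to $\langle\ ,\ \rangle_{\lambda}$) is precisely the intended justification. No gaps; this matches the paper's approach.
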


\subsection{Symplectic-Hodge bases of $H^1_{\dR}(X/U)$} 

Let $U$ be a scheme and $(X,\lambda)$ be a principally polarized abelian scheme over $U$ of relative dimension $g$.

\begin{defi}
 A \emph{symplectic-Hodge basis} of $(X,\lambda)_{/U}$ is a sequence $b=(\omega_1,\ldots,\omega_g,\eta_1,\ldots,\eta_g)$ of $2g$ global sections of $H^1_{\dR}(X/U)$ such that:
\begin{enumerate}
   \item $\omega_1,\ldots,\omega_g$ are sections of $F^1(X/U)$, and
   \item $b$ is a symplectic basis of $(H^1_{\dR}(X/U),\langle \ , \ \rangle_{\lambda})$ (Definition \ref{defsymplbasis}).
\end{enumerate} 
\end{defi}

Let us note that symplectic-Hodge bases may not exist globally, but such bases always exist locally for the Zariski topology over $U$ by Proposition \ref{exisunic}.

\section{The moduli stack $\mathcal{B}_g$} \label{modulistack}

In this section, we define for every integer $g\ge 1$ a category $\mathcal{B}_g$ fibered in groupoids over the category of schemes $\Sch_{/\ZZ}$ classifying principally polarized abelian schemes of relative dimension $g$ endowed with a symplectic-Hodge basis. 

We prove that $\mathcal{B}_g \to \Spec \ZZ$ is a smooth Deligne-Mumford stack over $\Spec \ZZ$ of relative dimension $2g^2+g$. The main point in proving this result will be to remark that for any principally polarized abelian scheme $(X,\lambda)$ of relative dimension $g$ over an affine scheme $U=\Spec R$, there is a natural free and transitive right action of the Siegel parabolic subgroup $P_g(R)$ of $\Sp_{2g}(R)$, consisting of upper triangular matrices, on the set of symplectic-Hodge bases of $(X,\lambda)_{/U}$.   

\subsection{The moduli stack $\mathcal{A}_g$} 

Let $g\ge 1$ be an integer. To fix ideas and notations we recall the definition of the moduli stack of principally polarized abelian schemes of relative dimension $g$. 

For any scheme $S$, we define a category fibered in groupoids $\mathcal{A}_{g,S} \to \Sch_{/S}$ as follows.
\begin{enumerate}[(i)]
    \item An object of $\mathcal{A}_{g,S}$ is given by an $S$-scheme $U$ and a principally polarized abelian scheme $(X,\lambda)$ of relative dimension $g$ over $U$; when $U$ is not clear in the context, we shall incorporate it in the notation by writing $(X,\lambda)_{/U}$. A morphism $(X,\lambda)_{/U} \to (Y,\mu)_{/V}$ in $\mathcal{A}_{g,S}$, denoted $F_{/f}$, is given by a cartesian diagram of $S$-schemes
$$
  \raisebox{-0.5\height}{\includegraphics{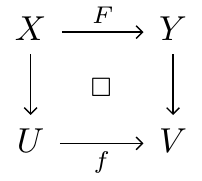}}
$$
preserving the identity sections of the abelian schemes and identifying $\lambda$ with the pullback of $\mu$ by $f:U \to V$. We shall occasionally denote $F_{/f}$ simply by $F$ when there will be no danger of confusion. We may also denote $(X,\lambda) = (Y,\mu)\times_UV$. 
 \item The structural functor $\mathcal{A}_{g,S} \to \Sch_{/S}$ is given by sending an object $(X,\lambda)_{/U}$ of $\mathcal{A}_{g,S}$ to the $S$-scheme $U$, and a morphism $F_{/f}$ to $f$.
\end{enumerate}
 
If $S=\Spec R$ is affine (resp. $S=\Spec \ZZ$), then we denote $\mathcal{A}_{g,S}\eqdef\mathcal{A}_{g,R}$ (resp. $\mathcal{A}_{g,S}\eqdef\mathcal{A}_g$).

Recall that the category of $S$-schemes can be seen as a subcategory of the 2-category of categories fibered in groupoids over $\Sch_{/S}$ by sending each $S$-scheme $U$ to the category $\Sch_{/U}$ endowed with its natural functor $\Sch_{/U} \to \Sch_{/S}$. In the sequel, we shall adopt the standard convention of denoting $\Sch_{/U}$ simply by $U$ when working in the context of categories fibered in groupoids. Then $\mathcal{A}_{g,S}$ is canonically equivalent to $\mathcal{A}_g\times_{\ZZ}S$ as categories fibered in groupoids over $S$.

We summarize the main properties of $\mathcal{A}_{g,S}$ we are going to use in the form of the next theorem.

\begin{theorem} \label{DMstack}
For any scheme $S$ and any integer $g\ge 1$, $\mathcal{A}_{g,S}$ is a smooth Deligne-Mumford stack over $S$ of relative dimension $g(g+1)/2$. 
\end{theorem}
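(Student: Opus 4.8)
The plan is to reduce to the universal base $S=\Spec\ZZ$ and then verify the three requirements of a smooth Deligne--Mumford stack: that $\mathcal{A}_g$ is a stack for the \'etale topology, that its diagonal is representable, separated and unramified, and that it admits a smooth atlas by a scheme; the relative dimension is read off from that atlas. Since $\mathcal{A}_{g,S}$ is equivalent to $\mathcal{A}_g\times_\ZZ S$, and all of these properties are stable under base change of the base scheme, it suffices to treat $\mathcal{A}_g$ over $\Spec\ZZ$.

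For the stack axioms the key point is descent. Morphisms of abelian schemes, and the condition of respecting polarizations, are fppf-local, so the $\Hom$-presheaves of $\mathcal{A}_g$ are \'etale (even fppf) sheaves. Effectivity of descent data amounts to descending, along an \'etale cover, a principally polarized abelian scheme: the polarization furnishes \'etale-locally a relatively ample line bundle, so one descends the underlying polarized projective scheme by ordinary descent of projective schemes, after which the group law, the identity section and the polarization descend automatically as morphisms of (already descended) schemes. Hence $\mathcal{A}_g$ is a stack over $(\Sch_{/\ZZ})_{\et}$.

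Next, the diagonal. For principally polarized abelian schemes $(X,\lambda)$ and $(Y,\mu)$ over a scheme $U$, the functor $\underline{\mathrm{Isom}}_U\big((X,\lambda),(Y,\mu)\big)$ is representable by a separated, quasi-compact $U$-scheme --- one realizes it inside a suitable Hilbert scheme, using that an isomorphism carries the ample bundle attached to $\lambda$ to the one attached to $\mu$, which bounds the relevant Hilbert polynomial --- and it is unramified over $U$ by the rigidity lemma: a first-order deformation of an isomorphism of abelian schemes restricting to a fixed one and compatible with the constant polarizations is itself constant. Unramifiedness of the diagonal is precisely what upgrades the algebraic stack $\mathcal{A}_g$ to a Deligne--Mumford stack; in particular the automorphism group schemes $\Aut(X,\lambda)$ are finite and unramified, which over a field comes down to Serre's lemma (for $n\ge 3$, an automorphism of $(X,\lambda)$ fixing $X[n]$ is the identity).

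Finally, the atlas and the dimension. For an integer $n\ge 3$, consider the functor $\mathcal{A}_{g,n}$ on $\ZZ[1/n]$-schemes classifying $(X,\lambda)$ of relative dimension $g$ together with a symplectic level-$n$ structure $X[n]\stackrel{\sim}{\to}(\ZZ/n\ZZ)^{2g}$; by Serre's lemma it has trivial automorphisms, hence is a sheaf, and by Mumford's geometric invariant theory (\cite{GIT94}) it is representable by a quasi-projective scheme over $\ZZ[1/n]$. Deformation theory shows $\mathcal{A}_{g,n}$ is smooth over $\ZZ[1/n]$ of relative dimension $g(g+1)/2$: by the Grothendieck--Messing / Serre--Tate theory the deformation functor of a principally polarized abelian scheme is formally smooth with tangent space the symmetric (polarization-compatible) part of $H^1(X,T_{X/U})\cong\mathrm{Lie}(X)\tensor\mathrm{Lie}(X^t)$, of rank $g(g+1)/2$, and adjoining the \'etale level-$n$ structure does not change the deformation theory. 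The forgetful morphism $\mathcal{A}_{g,n}\to\mathcal{A}_g\tensor\ZZ[1/n]$ is representable, finite, \'etale and surjective, being a torsor under the constant finite group $\Sp_{2g}(\ZZ/n\ZZ)$ (\'etale since $n$ is invertible over $\ZZ[1/n]$), so it is an \'etale atlas; together with the analysis of the diagonal this proves $\mathcal{A}_g\tensor\ZZ[1/n]$ is a smooth Deligne--Mumford stack over $\ZZ[1/n]$ of relative dimension $g(g+1)/2$. Since $\{\Spec\ZZ[1/2],\Spec\ZZ[1/3]\}$ is an open cover of $\Spec\ZZ$, taking $n=4$ over $\ZZ[1/2]$ and $n=3$ over $\ZZ[1/3]$ and gluing gives the statement over $\Spec\ZZ$, and then over arbitrary $S$ by base change. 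I expect the genuine obstacle to be the two deep classical inputs --- the GIT construction of $\mathcal{A}_{g,n}$ as a quasi-projective scheme, and the deformation-theoretic computation pinning down smoothness and the relative dimension $g(g+1)/2$; the remaining steps (descent, the diagonal, the gluing over $\Spec\ZZ$) are comparatively formal given standard facts about abelian schemes, and in a treatment aimed at light prerequisites one would simply cite these results.
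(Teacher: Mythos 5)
Your proposal is correct and follows essentially the same route as the paper, whose proof consists precisely in citing the two inputs you isolate: Deligne--Mumford-ness via the GIT construction of the level-$n$ moduli scheme serving as an étale atlas (\cite{GIT94} Theorem 7.9, cf. \cite{olsson12} Theorem 2.1.11), and smoothness with relative dimension $g(g+1)/2$ via Grothendieck's deformation-theoretic theorem (cf. \cite{oort71} Proposition 2.4.1). The extra steps you spell out (descent, representability and unramifiedness of the diagonal, gluing over $\Spec \ZZ$ from $n=3$ and $n=4$) are exactly the formal parts carried out in those references.
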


A proof that $\mathcal{A}_{g,S}$ is a Deligne-Mumford stack over $S$ is essentially contained in \cite{GIT94} Theorem 7.9 (cf. \cite{olsson12} Theorem 2.1.11). Smoothness and relative dimension are obtained by a theorem of Grothendieck (cf. \cite{oort71} Proposition 2.4.1). 

\subsection{Definition of $\mathcal{B}_g$} \label{defbg}

Let $F_{/f}: (X,\lambda)_{/U} \to (Y,\mu)_{/V}$ be a morphism in $\mathcal{A}_{g}$. By the compatibility with base change of the symplectic forms induced by principal polarizations (Remark \ref{remarkbasechange}), the pullback $F^*b$ of every symplectic-Hodge basis $b$ of $(Y,\mu)_{/V}$ is a symplectic-Hodge basis of $(X,\lambda)_{/U}$. We can thus define a functor
\begin{align*}
 \underline{B}_g : \mathcal{A}_{g}^{\opp} \longrightarrow \Sets
\end{align*}
that sends every object $(X,\lambda)_{/U}$ of $\mathcal{A}_{g}$ to the set of symplectic-Hodge bases of $(X,\lambda)_{/U}$, and whose action on morphisms is given by pullbacks as above. 

From the functor $\underline{B}_g$, we form a category fibered in groupoids 
\begin{align*}
\mathcal{B}_g \to \Spec \ZZ
\end{align*}
 as follows.
\begin{enumerate}[(i)]
      \item  An object of $\mathcal{B}_g$ is a ``triple'' $(X,\lambda,b)_{/U}$ where $(X,\lambda)_{/U}$ is an object of $\mathcal{A}_{g}$ and $b \in \underline{B}_g(X,\lambda)$. An arrow $(X,\lambda,b)_{/U} \to (Y,\mu,c)_{/V}$ is given by a morphism $F_{/f}: (X,\lambda)_{/U} \to (Y,\mu)_{/V}$ such that $b=F^*c$. We denote by
\begin{align*}
\pi_g: \mathcal{B}_g \to \mathcal{A}_{g}
\end{align*}
     the forgetful functor $(X,\lambda,b)_{/U}\mapsto (X,\lambda)_{/U}$.
     \item The structural functor $\mathcal{B}_g \to \Spec \ZZ$ is defined as the composition of $\pi_g$ with the structural functor $\mathcal{A}_{g} \to \Spec \ZZ$.
\end{enumerate}

The rest of this section is devoted to the proof of the next theorem.

\begin{theorem} \label{smoothdmstack}
The category fibered in groupoids $\mathcal{B}_g\to \Spec \ZZ$ is a smooth Deligne-Mumford stack over $\Spec \ZZ$ of relative dimension $2g^2+g$.
\end{theorem}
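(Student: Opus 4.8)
The plan is to show that $\pi_g : \mathcal{B}_g \to \mathcal{A}_g$ is a smooth, schematic, affine morphism with geometrically connected fibers of relative dimension $2g^2 + g - g(g+1)/2 = (3g^2+g)/2$, and then to conclude by composing with the known structure of $\mathcal{A}_g$ from Theorem \ref{DMstack}. More precisely, over any affine scheme $U = \Spec R$, the functor $\underline{B}_g$ restricted to objects lying over $U$ is representable; the key claim is that the set of symplectic-Hodge bases of a given $(X,\lambda)_{/U}$, when non-empty, is a torsor under the group $P_g(R)$, where $P_g$ is the Siegel parabolic of $\Sp_{2g}$ consisting of block-upper-triangular symplectic matrices, acting on the right by change of basis. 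This is the ``main point'' flagged in the section introduction.

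First I would make the torsor structure precise. Given a symplectic-Hodge basis $b = (\omega_1,\dots,\omega_g,\eta_1,\dots,\eta_g)$ and any other sequence $b' = b \cdot P$ obtained by an invertible $\ZZ$-linear (i.e. $R$-linear) change of basis $P \in \GL_{2g}(R)$, one checks directly that $b'$ is again a symplectic basis of $(H^1_{\dR}(X/U),\langle\ ,\ \rangle_\lambda)$ if and only if $P \in \Sp_{2g}(R)$, and that the Hodge condition $\omega'_i \in F^1(X/U)$ is preserved if and only if the lower-left $g\times g$ block of $P$ vanishes, i.e. $P$ lies in the Siegel parabolic $P_g(R)$ — here one uses that $F^1(X/U)$ is \emph{Lagrangian} (Lemma \ref{f1lagrangian}), so that $(\omega_1,\dots,\omega_g)$ is already a basis of $F^1(X/U)$ and $b$ forces a complementary decomposition. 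The action is free (a change of basis fixing a basis is the identity) and transitive on the non-empty fibers by the same linear-algebra computation. Globalizing, $\underline{B}_g$ is a pseudo-torsor under the group scheme $\underline{P_g}$ over $\mathcal{A}_g$; one then invokes the local existence of symplectic-Hodge bases (Proposition \ref{exisunic}, cited in Section \ref{shb}) to see it is a genuine $\underline{P_g}$-torsor for the Zariski (a fortiori étale) topology. Since $P_g$ is a smooth affine group scheme over $\ZZ$ — it is a parabolic subgroup of the smooth affine group scheme $\Sp_{2g}$, with underlying scheme isomorphic as a variety to $\GL_g \times \Sym_g \cong \GL_g \times \AA^{g(g+1)/2}$ — the torsor $\pi_g : \mathcal{B}_g \to \mathcal{A}_g$ is representable by schemes, affine, smooth, surjective, with geometrically connected fibers, of relative dimension $\dim \GL_g + \dim\Sym_g = g^2 + g(g+1)/2 = (3g^2+g)/2$.

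Granting this, the conclusion is formal. Being a torsor under a smooth affine group scheme, $\pi_g$ is in particular representable by schemes, so $\mathcal{B}_g$ is a Deligne–Mumford stack (the diagonal inherits quasi-compactness, separatedness, and unramifiedness from $\mathcal{A}_g$ since $\pi_g$ is schematic and separated, and an étale atlas is obtained by pulling back an étale atlas $V \to \mathcal{A}_g$ along $\pi_g$ and composing with an étale atlas of the resulting scheme). Smoothness over $\Spec\ZZ$ follows because $\pi_g$ is smooth and $\mathcal{A}_g \to \Spec\ZZ$ is smooth (Theorem \ref{DMstack}), and composition of smooth morphisms is smooth. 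Finally the relative dimension adds: $2g^2 + g = g(g+1)/2 + (3g^2+g)/2$.

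The main obstacle is the torsor claim over general (non-affine, or with base of mixed characteristic) schemes: one must verify that the $P_g(R)$-action is genuinely free and transitive at the level of the bilinear form $\langle\ ,\ \rangle_\lambda$ and the Hodge subbundle simultaneously, which reduces to the purely linear-algebraic statement that, over any commutative ring $R$, the group of automorphisms of a symplectic $R$-module of rank $2g$ preserving a fixed Lagrangian direct summand is exactly $P_g(R)$, and acts simply transitively on the symplectic bases adapted to that summand. This is elementary but must be done carefully with direct summands rather than subspaces; the Lagrangian property from Lemma \ref{f1lagrangian} and the base-change compatibility from Remark \ref{remarkbasechange} are exactly what make it go through, and local existence of adapted bases (Proposition \ref{exisunic}) upgrades the pseudo-torsor to a torsor.
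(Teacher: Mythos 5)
Your proposal is correct and follows essentially the same route as the paper: the set of symplectic--Hodge bases of $(X,\lambda)_{/U}$ is a Zariski torsor under the Siegel parabolic $P_{g,U}$ (free and transitive action, local existence via Proposition \ref{exisunic}), hence $\pi_g:\mathcal{B}_g\to\mathcal{A}_g$ is representable by smooth affine schemes of relative dimension $g(3g+1)/2$, and one concludes by composing with Theorem \ref{DMstack} and adding dimensions. The only step the paper spells out that you leave implicit is the verification that $\mathcal{B}_g$ is a stack for the étale topology, which it deduces from the quasi-coherence of $H^1_{\dR}(X/U)$ (descent for quasi-coherent sheaves), before invoking representability of $\pi_g$ to transfer the Deligne--Mumford property from $\mathcal{A}_g$.
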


\subsection{Siegel parabolic subgroup and proof of Theorem \ref{smoothdmstack}} \label{uppertriang}

Fix a scheme $U$ and an object $(X,\lambda)$ of $\mathcal{A}_{g}$ lying over $U$. Then we can define a functor
\begin{align*}
\underline{B}_{(X,\lambda)} : \Sch_{/U}^{\opp} \longrightarrow \Sets
\end{align*}  
that sends a $U$-scheme $U'$ to the set $\underline{B}_g((X,\lambda)\times_UU')$. It is clear that this functor defines a sheaf for the Zariski topology over $\Sch_{/U}$.

Let us now consider the \emph{symplectic group} $\Sp_{2g}$, namely the smooth affine group scheme over $\Spec \ZZ$ of relative dimension $2g^2+g$ such that for every affine scheme $V=\Spec R$
\begin{align*}
{\Sp}_{2g}(V) = \left.\left\{\left(\begin{array}{cc}
                           A & B \\
                           C & D
                          \end{array} \right) \in M_{2g\times 2g}(R) \ \right| \begin{aligned} &\ \ \ \ \ \ \ \ \ \ \ \   A,B,C,D \in M_{g\times g}(R) \text{ satisfy  } \\ &AB\transp = BA\transp\text{, }  CD\transp= DC\transp\text{, and } AD\transp -BC\transp = \mathbf{1}_g \end{aligned}\right\}\text{.}
 \end{align*}




The \emph{Siegel parabolic subgroup} $P_g$ of $\Sp_{2g}$ is defined as the subgroup scheme of $\Sp_{2g}$ such that, for every affine scheme $V=\Spec R$,
\begin{align*} P_g(V) = 
 \left.\left\{\left(\begin{array}{cc}
                           A & B \\
                           0 & (A\transp)^{-1}
                          \end{array} \right) \in M_{2g\times 2g}(R) \ \right|\  A \in {\GL}_g(R)\text{ and } B\in M_{g\times g}(R)\text{ satisfy }AB\transp=BA\transp \right\}\text{.}
\end{align*}
Note that $P_g$ is a smooth affine group scheme over $\Spec \ZZ$ of relative dimension $g(3g+1)/2$.

Let $(X,\lambda,b)$ be an object of $\mathcal{B}_g$ lying over $V=\Spec R$ and consider $b = (\  \omega \ \  \eta\  )$ as a row vector of order $2g$ with coefficients in the $R$-module $H^1_{\dR}(X/V)$. For any
\begin{align*}
p=\left(\begin{array}{cc}
                           A & B \\
                           0 & (A\transp)^{-1}
                          \end{array} \right) \in P_g(V)
\end{align*}
it easy to check that
\begin{align*}
b\cdot p \defeq (\begin{array}{cc}\omega A &  \omega B + \eta (A\transp)^{-1}\end{array})
\end{align*}
is a symplectic-Hodge basis of $(X,\lambda)_{/V}$. This defines a right action of $P_g(V)$ on $\underline{B}_g(X,\lambda)$:
\begin{align*}
\underline{B}_g(X,\lambda) \times P_g(V) \longrightarrow \underline{B}_g(X,\lambda)\text{.}
\end{align*}
 Moreover, it is clear that if $V'\subset V$ is an affine open subscheme of $V$, then the natural diagram
$$
\raisebox{-0.5\height}{\includegraphics{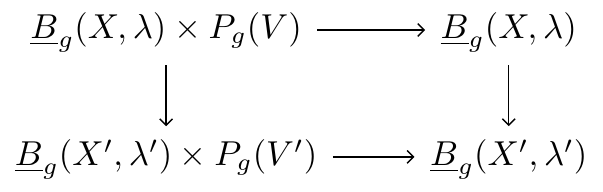}}
$$
commutes, where $(X',\lambda')=(X,\lambda)\times_VV'$.

Thus, for any scheme $U$, and any object $(X,\lambda)$ of $\mathcal{A}_{g}$ lying over $U$, we obtain a right action of the $U$-group scheme $P_{g,U} = P_g\times_{\ZZ}U$ on $\underline{B}_{(X,\lambda)}$.

\begin{lemma} \label{torsor}
The Zariski sheaf $\underline{B}_{(X,\lambda)}$ over $\mathsf{Sch}_{/U}$ is a right Zariski $P_{g,U}$-torsor for the above action.
\end{lemma}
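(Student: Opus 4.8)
The plan is to verify the two defining properties of a (Zariski) torsor: that the action is free and transitive on each nonempty fiber, and that the sheaf $\underline{B}_{(X,\lambda)}$ admits sections locally for the Zariski topology on $U$. Both statements are local on $U$, so we may assume $U = \Spec R$ is affine and it suffices to argue with $R$-points. First I would unwind the action: writing a symplectic-Hodge basis as a row vector $b = (\ \omega\ \ \eta\ )$ with entries in the free $R$-module $H^1_{\dR}(X/R)$ (free since $H^1_{\dR}(X/R)$ is a vector bundle and, after further localization, trivial), the relation $b\cdot p = (\ \omega A\ \ \omega B + \eta (A\transp)^{-1}\ )$ shows that $b \cdot p = b$ forces, upon comparing the first $g$ entries, $\omega A = \omega$, hence $A = \mathbf{1}_g$ because $\omega_1,\dots,\omega_g$ is a basis of the subbundle $F^1(X/R)$; then $\omega B + \eta = \eta$ forces $B = 0$ likewise. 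This gives freeness.

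Next, transitivity: given two symplectic-Hodge bases $b = (\ \omega\ \ \eta\ )$ and $b' = (\ \omega'\ \ \eta'\ )$ of $(X,\lambda)_{/R}$, I want to produce $p \in P_g(R)$ with $b' = b\cdot p$. Since $\omega_1,\dots,\omega_g$ and $\omega'_1,\dots,\omega'_g$ are both bases of $F^1(X/R)$ (Lemma \ref{f1lagrangian} identifies this as a Lagrangian, in particular a rank-$g$ subbundle), there is a unique $A \in \GL_g(R)$ with $\omega' = \omega A$; and since $b, b'$ are both symplectic bases of $(H^1_{\dR}(X/R),\langle\ ,\ \rangle_\lambda)$, there is a unique $M \in \GSp$-type change-of-basis matrix, in fact $M \in \Sp_{2g}(R)$, with $b' = b\cdot M$. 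Comparing, $M$ has upper-left block $A$ and, because $b'$ respects the Hodge filtration ($\omega'_i \in F^1$), the lower-left block of $M$ vanishes; the symplectic condition on $M$ then forces the lower-right block to be $(A\transp)^{-1}$ and the compatibility $AB\transp = BA\transp$ on the remaining block $B$. Hence $M \in P_g(R)$, and compatibility with localization is the commutative square already recorded before the lemma, so the action is transitive on the sheaf level.

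It remains to show $\underline{B}_{(X,\lambda)}$ has Zariski-local sections over $U$, i.e. that symplectic-Hodge bases exist Zariski-locally; but this is exactly Proposition \ref{exisunic} (the existence part), cited in the remark following the definition of symplectic-Hodge bases. Combining freeness, transitivity, local non-emptiness, and functoriality in $U$ (which is built into the definition of the action and the base-change compatibility of $\langle\ ,\ \rangle_\lambda$, Remark \ref{remarkbasechange}), we conclude that $\underline{B}_{(X,\lambda)}$ is a right Zariski $P_{g,U}$-torsor. The main obstacle is the transitivity step, specifically checking cleanly that the unique symplectic change-of-basis matrix relating two symplectic-Hodge bases has the block-triangular shape defining $P_g$; this is a direct but slightly fiddly matrix computation using that $F^1(X/R)$ is Lagrangian, and I would present it as the one genuine calculation in the proof.
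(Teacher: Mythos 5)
Your proposal is correct and follows essentially the same route as the paper: the paper's own proof simply invokes "a routine computation" for freeness and transitivity of the $P_g(V)$-action on nonempty fibers and cites the Zariski-local existence of symplectic-Hodge bases (Proposition \ref{exisunic}), which is precisely what you spell out. Your block-matrix verification (using that the $\omega_i$ span $F^1$, so the lower-left block of the change-of-basis matrix vanishes and the symplectic condition forces the $P_g$ shape) is exactly the computation the paper leaves implicit.
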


\begin{proof}
If $V$ is any affine scheme over $U$ such that $\underline{B}_{(X,\lambda)}(V)$ is non-empty, a routine computation shows that the action of $P_g(V)$ on $\underline{B}_{(X,\lambda)}(V)$ is free and transitive. Moreover, it was already remarked above that symplectic-Hodge bases exist locally for the Zariski topology.
\end{proof}

Since $P_{g,U}$ is affine, smooth, and of relative dimension $g(3g+1)/2$ over $U$, Lemma \ref{torsor} immediately implies the following corollary.

\begin{coro} \label{relrepr0}
For every scheme $U$, and every object $(X,\lambda)$ of $\mathcal{A}_{g}$ lying over $U$, the functor $\underline{B}_{(X,\lambda)}$ is representable by a smooth affine $U$-scheme $B(X,\lambda)$ of relative dimension $g(3g+1)/2$.
\end{coro}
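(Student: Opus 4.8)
The plan is to deduce the statement directly from Lemma \ref{torsor} by the standard gluing argument for torsors under a smooth affine group scheme. Since $\underline{B}_{(X,\lambda)}$ is a right Zariski $P_{g,U}$-torsor, it is in particular locally trivial for the Zariski topology on $U$: there is an open cover $U=\bigcup_{i\in I}U_i$ together with sections $s_i\in\underline{B}_{(X,\lambda)}(U_i)$. By freeness and transitivity of the action, each $s_i$ determines an isomorphism of Zariski sheaves over $\Sch_{/U_i}$
\begin{align*}
P_{g,U_i}\stackrel{\sim}{\longrightarrow}\underline{B}_{(X,\lambda)}|_{U_i}\text{,}\qquad p\longmapsto s_i\cdot p\text{,}
\end{align*}
where $P_{g,U_i}=P_g\times_{\ZZ}U_i$. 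In particular $\underline{B}_{(X,\lambda)}|_{U_i}$ is representable by the affine $U_i$-scheme $P_{g,U_i}$.

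Next I would assemble the transition data. Over $U_{ij}\defeq U_i\cap U_j$ there is a unique $g_{ij}\in P_g(U_{ij})$ with $s_j=s_i\cdot g_{ij}$, and the resulting gluing isomorphism of $U_{ij}$-schemes $P_{g,U_{ij}}\to P_{g,U_{ij}}$ is the left translation $p\mapsto g_{ij}p$. These isomorphisms satisfy the cocycle condition $g_{ij}g_{jk}=g_{ik}$ over $U_{ijk}$, so the family $\{P_{g,U_i}\}_{i\in I}$ with these transition isomorphisms glues to a $U$-scheme $B(X,\lambda)\to U$, which is affine over $U$ since affineness is local on the base. Because $\underline{B}_{(X,\lambda)}$ is a Zariski sheaf on $\Sch_{/U}$ (as observed just after its definition) and its restriction to each $U_i$ is represented by $P_{g,U_i}$ compatibly with the transition maps, the glued scheme $B(X,\lambda)$ represents $\underline{B}_{(X,\lambda)}$.

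Finally, smoothness and the relative dimension can be checked Zariski-locally on $U$: since $P_g$ is smooth over $\Spec\ZZ$ of relative dimension $g(3g+1)/2$, each $P_{g,U_i}\to U_i$ is smooth of relative dimension $g(3g+1)/2$, hence so is $B(X,\lambda)\to U$. This yields the corollary.

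I do not expect any serious obstacle here: all the substance is contained in Lemma \ref{torsor}, and what remains is the elementary fact that a Zariski torsor under an affine (resp.\ smooth of relative dimension $d$) $U$-group scheme is itself representable by a $U$-scheme with the same properties. The only points requiring a little care are to exploit \emph{Zariski} local triviality — rather than mere étale local triviality — so that the gluing takes place along an open cover of the base $U$ and no descent theory beyond gluing of schemes is needed, and to invoke the sheaf property of $\underline{B}_{(X,\lambda)}$ to guarantee that the glued scheme genuinely represents the functor and not just something locally isomorphic to it.
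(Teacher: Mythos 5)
Your argument is correct and is exactly the standard torsor-gluing argument that the paper leaves implicit: it deduces the corollary from Lemma \ref{torsor} together with the fact that $P_{g,U}$ is affine, smooth, and of relative dimension $g(3g+1)/2$, checking representability via Zariski-local triviality and the sheaf property, and affineness, smoothness, and dimension locally on $U$. No gaps; this matches the paper's (one-line) proof in substance.
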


\begin{obs} \label{relrepr}
Let us keep the notations of the above corollary. Recall that the principally polarized abelian scheme $(X,\lambda)$ over $U$ corresponds to a morphism $U \to \mathcal{A}_g$. Then $B(X,\lambda)$ is a scheme representing $\mathcal{B}_g\times_{\mathcal{A}_g}U$.
\end{obs}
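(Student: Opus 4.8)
The plan is to produce an explicit equivalence, over $\Sch_{/\ZZ}$, between the $\ZZ$-scheme $B(X,\lambda)$ — regarded as a category fibered in groupoids in the usual way — and the $2$-fibre product $\mathcal{B}_g\times_{\mathcal{A}_g}U$ formed from $\pi_g$ and the morphism $u\colon U\to\mathcal{A}_g$ classifying $(X,\lambda)$. First I would spell out both sides. By the standard description of $2$-fibre products of categories fibered in groupoids, an object of $(\mathcal{B}_g\times_{\mathcal{A}_g}U)(T)$ over a $\ZZ$-scheme $T$ is a triple $\bigl((Y,\mu,c)_{/T},\,t,\,\alpha\bigr)$, where $(Y,\mu,c)_{/T}$ is an object of $\mathcal{B}_g$ lying over $T$, $t\in U(T)=\Hom_{\ZZ}(T,U)$, and $\alpha\colon(Y,\mu)\stackrel{\sim}{\to}t^*(X,\lambda)$ is an isomorphism in $\mathcal{A}_g(T)$; a morphism over $h\colon T'\to T$ is a morphism $F_{/h}$ of $\mathcal{B}_g$ together with the evident morphism of $U$ over $h$, subject to the usual compatibility between the rigidifications $\alpha$. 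On the other hand, Corollary \ref{relrepr0} identifies a $T$-point $s\colon T\to B(X,\lambda)$ with a pair $(t,c)$, where $t=\pr\circ s\colon T\to U$ is the induced $U$-point and $c\in\underline{B}_{(X,\lambda)}(T)$ is a symplectic-Hodge basis of $t^*(X,\lambda)_{/T}$.

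Next I would define the comparison functor $\Phi\colon B(X,\lambda)\to\mathcal{B}_g\times_{\mathcal{A}_g}U$ on objects by $\Phi(s)\defeq\bigl((t^*X,t^*\lambda,c)_{/T},\,t,\,\id_{t^*(X,\lambda)}\bigr)$, with $(t,c)$ attached to $s$ as above, and on morphisms by sending a relation $s\circ h=s'$ to the morphism of $\mathcal{B}_g$ given by the cartesian square realizing $((t')^*X,(t')^*\lambda)$ as a pullback of $(t^*X,t^*\lambda)$, equipped with the tautological identifications; functoriality is immediate from that of $\underline{B}_{(X,\lambda)}$. It then remains to check that $\Phi$ is essentially surjective and fully faithful.

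For essential surjectivity, given $\bigl((Y,\mu,c)_{/T},t,\alpha\bigr)$ I would transport the basis $c$ through $\alpha$. Since $\alpha$ is an isomorphism in $\mathcal{A}_g(T)$, it carries the Hodge subbundle and the symplectic form of $(Y,\mu)$ to those of $t^*(X,\lambda)$ — this is exactly what makes pullbacks of symplectic-Hodge bases along morphisms of $\mathcal{A}_g$ again symplectic-Hodge bases, as used in the construction of $\underline{B}_g$ in \S\ref{defbg} and in Remark \ref{remarkbasechange} — so $\bar c\defeq(\alpha^*)^{-1}(c)$ is a symplectic-Hodge basis of $t^*(X,\lambda)_{/T}$; let $s$ be the corresponding point of $B(X,\lambda)$. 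Then $\alpha$, now read as a morphism $(Y,\mu,c)_{/T}\to(t^*X,t^*\lambda,\bar c)_{/T}$ of $\mathcal{B}_g$ over $\id_T$ (one has $\alpha^*\bar c=c$ by construction), paired with $\id_t$, is an isomorphism $\bigl((Y,\mu,c)_{/T},t,\alpha\bigr)\stackrel{\sim}{\to}\Phi(s)$ in $(\mathcal{B}_g\times_{\mathcal{A}_g}U)(T)$.

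For full faithfulness, I would fix $s'$ over $T'$, $s$ over $T$, and $h\colon T'\to T$. A morphism $\Phi(s')\to\Phi(s)$ over $h$ is a morphism $F_{/h}$ of $\mathcal{B}_g$ from $((t')^*X,(t')^*\lambda,c')$ to $(t^*X,t^*\lambda,c)$ satisfying the rigidification compatibility of the fibre product; since both rigidifications are identities, that compatibility forces the underlying morphism of $\mathcal{A}_g$ to be the canonical base-change morphism $F_0$ attached to the equality $t'=t\circ h$, and then the defining condition of $\mathcal{B}_g$ reduces to $F_0^*c=c'$, i.e.\ to $h^*c=c'$ by the very definition of $\underline{B}_{(X,\lambda)}$ on morphisms. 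By Corollary \ref{relrepr0} this is equivalent to $s\circ h=s'$, and whenever it holds such an $F_{/h}$ is uniquely determined. Hence $\Phi$ is a bijection on $\Hom$-sets over every $h$, and therefore an equivalence, which is the content of the remark. I expect the only real obstacle to be the $2$-categorical bookkeeping in the last two steps; the substantive inputs — representability of $\underline{B}_{(X,\lambda)}$ (Corollary \ref{relrepr0}) and the compatibility of $F^1$ and $\langle\ ,\ \rangle_{\lambda}$ with isomorphisms of principally polarized abelian schemes — are already established.
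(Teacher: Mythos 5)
Your proof is correct and takes the only natural route, which is precisely the definitional unwinding the paper leaves implicit: the remark is stated without proof as an immediate consequence of Corollary \ref{relrepr0} together with the fact that isomorphisms in $\mathcal{A}_g$ carry symplectic-Hodge bases to symplectic-Hodge bases (cf. Remark \ref{remarkbasechange}), and your comparison functor $\Phi$ with the essential surjectivity and full faithfulness checks makes exactly that explicit. In particular, your rigidification argument correctly forces the underlying $\mathcal{A}_g$-morphism to be the canonical base-change map, and your identification of $T$-points of $B(X,\lambda)$ with pairs $(t,c)$ is the intended use of the representability of $\underline{B}_{(X,\lambda)}$.
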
 

\begin{proof}[Proof of Theorem \ref{smoothdmstack}]
Recall that for any scheme $U$ and any abelian scheme $X$ over $U$, $H^1_{\dR}(X/U)$ is a quasi-coherent sheaf over $U$, and that any quasi-coherent sheaf over $U$ induces a sheaf over $\Sch_{/U}$ endowed with the fppf topology (\cite{olsson16} Lemma 4.3.3). Since the étale topology is coarser than the fppf topology, this shows in particular that $H^1_{\dR}(X/U)$ induces a sheaf over $\Sch_{/U}$ endowed with the étale topology; this immediately implies that $\mathcal{B}_g\to \Spec \ZZ$ is a stack over $\Spec \ZZ$. 

It follows in particular from Corollary \ref{relrepr0} that the morphism $\pi_g : \mathcal{B}_g \to \mathcal{A}_{g}$ is representable by smooth schemes (Remark \ref{relrepr}). Hence, as $\mathcal{A}_{g}\to \Spec \ZZ$ is a Deligne-Mumford stack over $\Spec \ZZ$, the same holds for $\mathcal{B}_g\to \Spec \ZZ$ (cf. \cite{olsson16} Proposition 10.2.2). The smoothness of $\mathcal{B}_g \to \Spec \ZZ$ follows by composition from that of $\mathcal{A}_{g} \to \Spec \ZZ$ and that of $\pi_g$. Finally, we can compute the relative dimension of $\mathcal{B}_g \to \Spec \ZZ$ as the sum of that of  $\mathcal{A}_{g} \to \Spec \ZZ$ and that of $\pi_g$:
\begin{align*}
\frac{g(g+1)}{2} + \frac{g(3g +1)}{2} = 2g^2+g\text{.}
\end{align*}  
\end{proof}

\section{Representability of $\mathcal{B}_g$ by a scheme} \label{representability}

It is easy to see that if $S$ is a scheme over $\mathbf{F}_2$, then $\mathcal{B}_g\times_{\ZZ} S\to S$ is not representable. Indeed, if $(X,\lambda,b)_{/U}$ is an object of $\mathcal{B}_g$ lying over a scheme $U$ over $\mathbf{F}_2$, then the involution $[-1]: P \mapsto -P$ on $X$ defines a non-trivial automorphism $[-1]_{/\id_U}: (X,\lambda)_{/U} \to (X,\lambda)_{/U}$ in $\mathcal{A}_g(U)$ such that
\begin{align*}
[-1]^*b = -b = b\text{,}
\end{align*}
thus a non-trivial automorphism of $(X,\lambda,b)_{/U}$ in $\mathcal{B}_g(U)$.

For any ring $R$, let us denote $\mathcal{B}_{g,R} \defeq \mathcal{B}_g\tensor_{\ZZ}R$. In this section we prove the following theorem.

\begin{theorem} \label{repr}
The stack $\mathcal{B}_{g,\ZZ[1/2]}\to \Spec \ZZ[1/2]$ is representable by a smooth quasi-projective scheme $B_g$ over $\ZZ[1/2]$ of relative dimension $2g^2+g$.
\end{theorem}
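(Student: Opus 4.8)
The plan is to combine the theorem of Deligne--Mumford / Artin on when a Deligne--Mumford stack is an algebraic space, namely that a DM stack with trivial (relative) automorphism groups is an algebraic space, together with an ampleness argument to get quasi-projectivity; smoothness and relative dimension are already known from Theorem \ref{smoothdmstack}. So the real content is twofold: first, show that over $\ZZ[1/2]$ every object $(X,\lambda,b)_{/U}$ of $\mathcal{B}_g$ has no nontrivial automorphisms, and second, check quasi-projectivity.

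For the first point, suppose $F_{/\id_U}$ is an automorphism of $(X,\lambda,b)_{/U}$; then $F$ is an automorphism of the principally polarized abelian scheme $(X,\lambda)$ fixing $b$. Since a symplectic-Hodge basis $b$ is in particular a basis of the vector bundle $H^1_{\dR}(X/U)$, the induced map $F^* $ on $H^1_{\dR}(X/U)$ is the identity. It therefore suffices to prove that a polarization-preserving automorphism of $X$ that acts trivially on $H^1_{\dR}(X/U)$ must be the identity, at least after inverting $2$. First reduce to the case where $U = \Spec k$ for $k$ an algebraically closed field (an automorphism of an abelian scheme over a connected base is determined by its fibre at a point, and one can check the claim fibrewise after passing to geometric points; one must be slightly careful and work over each connected component, using rigidity of abelian schemes). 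If $k$ has characteristic $0$, this is classical: $F^*$ acts on $H^1_{\dR}(X/k) \cong H^1_{\dR,\mathrm{Betti}}$ as the transpose of its action on $H_1(X) \cong \ZZ^{2g}$, so $F^* = \id$ forces $F = \id$ since an endomorphism of $X$ is determined by its action on $H_1$. For $k$ of characteristic $p$ (necessarily $p$ odd, since we invert $2$), I would use the fact that $\End(X)$ embeds into $\End(T_p X)$ for any prime $\ell = p$ different from $p$ (so that $F$ is determined by its action on $\ell$-adic Tate modules), but more directly: the key input is Oda's theorem, cited in the introduction, relating $H^1_{\dR}(X/k)$ with the Dieudonné module $\mathbf{D}(X[p])$. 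An automorphism of $X$ acting trivially on $H^1_{\dR}(X/k)$ hence acts trivially on $\mathbf{D}(X[p])$, hence trivially on $X[p]$; combined with its trivial action on prime-to-$p$ torsion (again via de Rham vs. $\ell$-adic comparison, or directly: $F^*=\id$ on $H^1_{\dR}$ gives $\deg F = 1$, so $F$ is an automorphism of $X$ fixing enough torsion), one concludes $F = \id$ provided $F$ also fixes $X[\ell]$ for some $\ell$. The cleanest route is: $F$ fixes $X[p]$ by Oda, and fixes $X[\ell]$ for $\ell \ne p$ because the de Rham cohomology and $\ell$-adic étale cohomology of $X$ have compatible cycle-theoretic descriptions — or, sidestepping this, one notes that $F - \id \in \End(X)$ kills $X[p]$, so $F - \id = p\cdot G$ for some $G \in \End(X)$, and an elementary positivity/height argument (the Rosati-adjoint, or the fact that $\End(X)$ acts faithfully on any de Rham cohomology in a way compatible with $p$-divisibility) forces $G$ itself to be divisible, etc., until $F = \id$. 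In writing this up I would simply invoke Oda \cite{oda69} Corollary 5.11 together with the standard fact that an automorphism of $X$ fixing $X[p]$ pointwise and acting trivially on $H^1_{\dR}$ is trivial when $p$ is odd, exactly as flagged in the introduction; the characteristic-$2$ counterexample at the start of Section \ref{representability} shows this is sharp.

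Once $\mathcal{B}_{g,\ZZ[1/2]}$ is shown to have trivial automorphisms, it is an algebraic space over $\ZZ[1/2]$, and being smooth over $\ZZ[1/2]$ (Theorem \ref{smoothdmstack}) it is smooth of relative dimension $2g^2+g$; write $B_g$ for it. For quasi-projectivity I would proceed as follows. The forgetful morphism $\pi_g : \mathcal{B}_g \to \mathcal{A}_g$ is representable, affine, and smooth of relative dimension $g(3g+1)/2$ by Corollary \ref{relrepr0} and Remark \ref{relrepr}; in fact, by Lemma \ref{torsor}, $B_g \to \mathcal{A}_{g,\ZZ[1/2]}$ is (the algebraic space representing) a $P_{g}$-torsor, and since $\mathcal{A}_{g,\ZZ[1/2]}$ is already known (by the theory recalled around Theorem \ref{DMstack}) to be a quasi-projective DM stack which — after rigidifying or passing to a fine moduli space with level structure — carries ample line bundles, one transports quasi-projectivity along $\pi_g$: an affine morphism to a quasi-projective algebraic space over a Noetherian base is quasi-projective, hence the total space is quasi-projective over $\ZZ[1/2]$. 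More concretely: $\mathcal{A}_{g,\ZZ[1/2]}$ has a coarse moduli space $A_g$ which is quasi-projective over $\ZZ[1/2]$, and once we know $B_g$ is an algebraic space that is affine over the stack $\mathcal{A}_{g,\ZZ[1/2]}$, we get a quasi-affine (even affine, étale-locally) morphism $B_g \to A_g$, and one deduces $B_g$ is a quasi-projective scheme over $\ZZ[1/2]$; the Hodge line bundle $\det F^1$ pulled back to $B_g$, suitably twisted, provides an ample line bundle. Alternatively, and perhaps most painlessly, invoke that a separated algebraic space of finite type over $\ZZ[1/2]$ which admits an affine morphism to a scheme quasi-projective over $\ZZ[1/2]$ is itself a quasi-projective scheme — here the target scheme being any quasi-projective scheme equipped with a finite surjective map from $\mathcal{A}_{g,\ZZ[1/2]}$'s coarse space (Mumford GIT), or a level cover $\mathcal{A}_{g,n}$ with $n\ge 3$ invertible.

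The main obstacle is the automorphism-triviality statement in positive (odd) characteristic: in characteristic $0$ it is immediate, but in characteristic $p$ one genuinely needs Oda's comparison between $H^1_{\dR}(X/k)$ and the Dieudonné module of $X[p]$ — and one needs to be careful that fixing $H^1_{\dR}$ really does pin down the automorphism, which requires handling both the $p$-part (via Oda) and the prime-to-$p$ part of the torsion, and excluding the $p=2$ pathology. Everything else — deducing algebraic-space-ness from trivial automorphisms, smoothness and dimension from Theorem \ref{smoothdmstack}, and quasi-projectivity from the affine torsor structure over quasi-projective $\mathcal{A}_{g,\ZZ[1/2]}$ — is formal or standard.
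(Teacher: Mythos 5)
Your overall strategy is the paper's: prove that over $\ZZ[1/2]$ an automorphism of $(X,\lambda)$ fixing a symplectic-Hodge basis is trivial, so that $\mathcal{B}_{g,\ZZ[1/2]}$ is an algebraic space, and then deduce quasi-projectivity by comparison with $\mathcal{A}_{g,\ZZ[1/2]}$, smoothness and the dimension being already in Theorem \ref{smoothdmstack}. On the rigidity step your endpoint is correct, but the characteristic-$p$ discussion contains detours you should discard: once Oda's theorem gives that $\varphi$ acts as the identity on $X[p]$, Mumford's theorem (\cite{mumford70} IV.21, Theorem 5, i.e.\ Serre's lemma) applied with $n=p\ge 3$ to the polarization-preserving automorphism $\varphi$ already yields $\varphi=\id$; no control of the prime-to-$p$ torsion is needed, the suggested comparison ``trivial on $H^1_{\dR}$ hence trivial on $\ell$-adic cohomology'' does not exist in characteristic $p$ for an abstract automorphism, and your divisibility sketch is just the proof of that cited lemma. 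This is exactly Lemma \ref{rigp}. Your reduction to geometric fibres via ``an automorphism of an abelian scheme over a connected base is determined by one fibre'' is the standard rigidity statement (\cite{GIT94} 6.1--6.2); the paper instead derives it from the Artinian rigidity Lemma \ref{rigidite} together with Krull's intersection theorem, plus a noetherian-approximation step that you would still need for non-noetherian $U$.

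Where you genuinely diverge from the paper is the quasi-projectivity mechanism, and this is also where your write-up is thinnest. The paper pulls $\mathcal{B}_{g,\ZZ[1/2]}$ back along the quasi-projective level cover $A_{g,1,4}\to\mathcal{A}_{g,\ZZ[1/2]}$ (\cite{GIT94} Theorem 7.9), obtaining a quasi-projective scheme $B$ affine over $A_{g,1,4}$ carrying a free action of a finite constant group $\Gamma$, and concludes by the quotient result of \cite{SGA103} recalled in Lemma \ref{finiteaction} that $B/\Gamma$ is a quasi-projective scheme representing $\mathcal{B}_{g,\ZZ[1/2]}$. Your main route --- $B_g$ is affine over the stack $\mathcal{A}_{g,\ZZ[1/2]}$, whose coarse space $A_g$ is quasi-projective, hence $B_g$ is quasi-projective --- can be made to work, but the step you leave hanging (``quasi-affine, even affine étale-locally'') is precisely what has to be proved: one must show $B_g\to A_g$ is affine, which is checked étale-locally on $A_g$ using the local presentation of $\mathcal{A}_{g,\ZZ[1/2]}$ as a quotient of an affine scheme by a finite group acting, by rigidity, freely on the affine pullback of $B_g$ --- in other words, the level-cover-plus-free-quotient argument in disguise. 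Your alternative formulation, an affine morphism from $B_g$ to a level cover, does not parse: there is no morphism from $B_g$ itself to such a cover, only from its fibre product with it, and it is that fibre product which must be divided by the finite group, as the paper does.
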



Let us briefly summarize our proof of Theorem \ref{repr}.

 We shall first prove that $\mathcal{B}_{g,\ZZ[1/2]}$ is an algebraic space over $\ZZ[1/2]$. This amounts to proving that the functor $\underline{B}_g$ is \emph{rigid} over $\ZZ[1/2]$ (see Definition \ref{defrigid} below). By the classical ``rigidity lemma'' for abelian schemes (Lemma \ref{rigidite}), we reduce the proof that $\underline{B}_g$ is rigid over $\ZZ[1/2]$ to proving that $\underline{B}_g$ is rigid over any algebraically closed field of characteristic $0$ or $p>2$. In positive characteristic, this will be obtained by a theorem of Oda characterizing the first de Rham cohomology of an abelian variety over a perfect field of characteristic $p$ in terms of its $p$-torsion subgroup scheme.

Finally, we use the existence of a quasi-projective surjective étale scheme over $\mathcal{A}_{g,\ZZ[1/2]}$ to conclude, via a simple base-change argument, that $\mathcal{B}_{g,\ZZ[1/2]}$ is actually representable by a quasi-projective $\ZZ[1/2]$-scheme.

\subsection{Rigidity of $\underline{B}_g$}

Let $R$ be a ring. The following terminology has been borrowed from \cite{KM85} 4.4.

\begin{defi} \label{defrigid}
We say that the functor $\underline{B}_g$ (cf. paragraph \ref{defbg}) is \emph{rigid} over $R$ if, for every $R$-scheme $U$, and every object $(X,\lambda)$ of $\mathcal{A}_g$ lying over $U$, the action of $\Aut_U(X,\lambda)$ on $\underline{B}_g(X,\lambda)_{/U}$ is free. 
\end{defi}

Note that $\underline{B}_g$ is rigid over $R$ if and only if the fiber categories of $\mathcal{B}_{g,R} \to \Spec R$ are discrete. As $\mathcal{B}_g$ is a Deligne-Mumford stack over $\Spec \ZZ$, this amounts to saying that $\mathcal{B}_{g,R} \to \Spec R$ is an algebraic space over $\Spec R$ (see our terminology conventions in \ref{stacks}).

\begin{lemma} \label{rig0}
Let $k$ be a field of characteristic 0. Then $\underline{B}_g$ is rigid over $k$.
\end{lemma}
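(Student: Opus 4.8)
The plan is to reduce the statement to a concrete linear-algebra fact about the action of automorphisms on de Rham cohomology in characteristic $0$. Let $U$ be a $k$-scheme, $(X,\lambda)$ an object of $\mathcal{A}_g$ over $U$, and $\sigma \in \Aut_U(X,\lambda)$ such that $\sigma^* b = b$ for some symplectic-Hodge basis $b = (\omega_1,\ldots,\omega_g,\eta_1,\ldots,\eta_g)$ of $(X,\lambda)_{/U}$. Since $b$ is a basis of the vector bundle $H^1_{\dR}(X/U)$, the condition $\sigma^* b = b$ means precisely that $\sigma^*$ acts as the identity on $H^1_{\dR}(X/U)$. The goal is to conclude that $\sigma = \id_X$. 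As rigidity is a condition that can be checked after faithfully flat base change (and after passing to a covering of $U$ on which $b$ exists, using Proposition \ref{exisunic}), and as an automorphism of an abelian scheme is trivial if and only if it is trivial on every geometric fiber (this is the ``rigidity lemma'' alluded to in the sketch of Theorem \ref{repr}), I would immediately reduce to the case where $U = \Spec k$ with $k$ algebraically closed of characteristic $0$, so that $X$ is a complex (or at least characteristic-$0$) abelian variety.

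The key step is then: an automorphism $\sigma$ of an abelian variety $X$ over an algebraically closed field $k$ of characteristic $0$ that acts trivially on $H^1_{\dR}(X/k)$ is the identity. First I would note that $\sigma$ acts trivially on $\Lie(X)^{\vee} = F^1(X/k) = p_*\Omega^1_{X/k}$, hence trivially on $\Lie(X)$, i.e.\ $D\sigma = \id$ on the tangent space at the origin. In characteristic $0$ this already forces $\sigma = \id$: an endomorphism of an abelian variety is determined by its action on the Lie algebra (equivalently, on the formal group at the origin, which in characteristic $0$ is isomorphic via the exponential to its Lie algebra), and more precisely the map $\End(X) \to \End(\Lie X)$ is injective because $\End(X)$ is a torsion-free $\ZZ$-module embedding into $\End(H_1(X,\QQ)) = \End(\Lie X \oplus \overline{\Lie X})$ compatibly, so that $\sigma - \id$ being zero on $\Lie X$ together with $\sigma-\id \in \End(X)$ forces $\sigma - \id$ to be an isogeny-to-zero, i.e.\ $\sigma = \id$. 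Alternatively, and more self-containedly: the fixed locus $X^{\sigma}$ is a closed subgroup scheme whose Lie algebra is the $1$-eigenspace of $D\sigma$ on $\Lie X$, which is all of $\Lie X$; in characteristic $0$ a group scheme over a field is smooth, so $X^{\sigma}$ is a smooth closed subgroup of $X$ of the same dimension as $X$, hence equals $X$, i.e.\ $\sigma = \id$.

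I would organize the write-up as follows. After the reductions above, state the claim as: if $k$ is algebraically closed of characteristic $0$, $X/k$ an abelian variety, and $\sigma \in \Aut(X)$ acts as the identity on $H^1_{\dR}(X/k)$, then $\sigma = \id_X$. Prove it via the exact sequence \eqref{hodgefiltr}: triviality on $H^1_{\dR}(X/k)$ gives triviality on $F^1(X/k) = (\Lie X)^{\vee}$, hence on $\Lie X$; then invoke characteristic-$0$ smoothness of the fixed-point subgroup scheme $X^{\sigma}$ (or the injectivity of $\End(X) \hookrightarrow \End(\Lie X)$ in characteristic $0$) to conclude $\sigma = \id_X$. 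Finally observe that this gives freeness of the $\Aut_U(X,\lambda)$-action on $\underline{B}_g(X,\lambda)_{/U}$ for arbitrary $k$-schemes $U$, by the base-change reduction, which is exactly the assertion that $\underline{B}_g$ is rigid over $k$.

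The main obstacle is making the passage ``from $\sigma^*=\id$ on $H^1_{\dR}$ to $\sigma=\id$'' rigorous with the right amount of generality: one must be careful that $\underline{B}_g(X,\lambda)_{/U}$ may be empty or that $b$ only exists locally, so freeness of the action should be phrased and checked Zariski-locally on $U$ using Proposition \ref{exisunic}, and then fiberwise using that an endomorphism of an abelian scheme over a connected base is the identity iff it is so on one geometric fiber. The genuinely nontrivial input — and the place where the hypothesis $\operatorname{char} k = 0$ is essential — is precisely the implication $D\sigma = \id_{\Lie X} \Rightarrow \sigma = \id_X$, which fails in positive characteristic (e.g.\ for $\sigma = [1+p]$ on a suitable supersingular setup the action on $H^1_{\dR}$ can be subtler), and this is exactly why the positive-characteristic case treated next requires Oda's theorem rather than this soft argument.
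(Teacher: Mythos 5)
Your proposal is correct, and for the key step it takes a genuinely different (purely algebraic) route from the paper. Both arguments start identically: since $b$ is a basis, $\sigma^*b=b$ forces $\sigma^*=\id$ on $H^1_{\dR}(X/k)$, hence on $F^1(X/k)=H^0(X,\Omega^1_{X/k})\cong(\Lie X)^{\vee}$, hence $\Lie\sigma=\id_{\Lie X}$. The divergence is in passing from $\Lie\sigma=\id$ to $\sigma=\id_X$. The paper first reduces to $k=\CC$ by ``elimination of noetherian hypotheses'' (descend to a finitely generated subfield of $k$ and embed it in $\CC$) and then uses the analytic uniformization: the exponential $\Lie X\to X^{\an}$ is a surjective homomorphism intertwining $\Lie\sigma$ and $\sigma^{\an}$, so $\sigma^{\an}=\id$. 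You instead stay algebraic: the fixed locus $X^{\sigma}=\ker(\sigma-\id_X)$ is a closed subgroup scheme with $\Lie(X^{\sigma})=\ker(\Lie\sigma-\id)=\Lie X$, and by Cartier's theorem (group schemes of finite type over a field of characteristic $0$ are smooth) it is a smooth closed subgroup of dimension $g$, hence equals $X$. This buys you a proof that works directly over any field of characteristic $0$, with no reduction to $\CC$ and no analytic input, and it isolates cleanly where characteristic $0$ enters (Cartier's theorem), exactly as you observe when contrasting with the positive-characteristic case that needs Oda's theorem. Two small remarks: your first variant of the key step (injectivity of $\End(X)\to\End(\Lie X)$ via the action on $H_1(X,\QQ)$) is loosely phrased as written, but your second, fixed-locus argument is complete and suffices, so nothing is lost. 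Also, the paper's own proof of this lemma treats only the case of a base field, deferring the reduction from an arbitrary $k$-scheme $U$ to geometric fibers (via the Artinian rigidity lemma, Krull's intersection theorem, and noetherian approximation) to Proposition \ref{rigid}; your sketch of that reduction is consistent with what the paper does there, though if you include it in your write-up you should make explicit that fiberwise triviality implies triviality only thanks to that rigidity lemma, since the base may be non-reduced.
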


\begin{proof}
Let $(X,\lambda,b)$ be an object of $\mathcal{B}_g$ lying over $k$ and $\varphi :X \to X$ be a $k$-automorphism of $(X,\lambda)$ such that $\varphi^*b = b$; we must show that $\varphi = \id_X$.

We claim that it is sufficient to treat the case $k=\CC$. In fact, as $X$ is of finite type over $k$, by ``elimination of noetherian hypothesis'' (cf. \cite{EGAIV3} 8.8, 8.9, 8.10, 12.2.1, and \cite{EGAIV4} 17.7.9), there exists a subfield $k_0$ of $k$, of finite type over $\QQ$, and a principally polarized abelian variety $(X_0,\lambda_0)$ over $k_0$ endowed with a symplectic-Hodge basis $b_0$ and  a $k_0$-automorphism $\varphi_0$ of $(X_0,\lambda_0)$ satisfying $\varphi_0^*b_0=b_0$, such that $(X,\lambda,b)$ (resp. $\varphi$) is obtained from $(X_0,\lambda_0,b_0)$ (resp. $\varphi_0$) by the base change $\Spec k \to \Spec k_0$. After fixing an embedding of $k_0$ in $\CC$, we finally remark that if $\varphi_{0,\CC}$ is the identity over $X_0 \tensor_{k_0}\CC$, then the same holds for $\varphi_0$, and thus also for $\varphi$.

Let then $k=\CC$. It is sufficient to prove that the induced automorphism of complex Lie groups $\varphi^{\an} : X^{\an} \to X^{\an}$ is the identity. As $X^{\an}$ is a complex torus, the exponential $\exp : \Lie X \to X^{\an}$ is a surjective morphism of complex Lie groups. Therefore, it follows from the commutative diagram
$$
\raisebox{-0.5\height}{\includegraphics{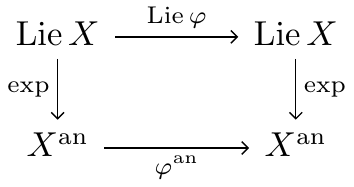}}
$$
that it sufficient to prove that $\Lie \varphi = \id_{\Lie X}$. Now, if $\varphi$ preserves symplectic-Hodge basis of $(X,\lambda)$, then in particular the $\CC$-linear map $\varphi^* : H^0(X,\Omega^1_{X/\CC}) \to H^0(X,\Omega^1_{X/\CC})$ is the identity, and thus its dual $\Lie \varphi : \Lie X \to \Lie X$ is also the identity. 
\end{proof}

We now treat the case of positive characteristic. Let us briefly recall some notions in Dieudonné theory and its relations with abelian varieties.

Let $k$ be a perfect field of characteristic $p>0$. We denote by $W$ the ring of Witt vectors over $k$, and by $\sigma$ the unique ring automorphism of $W$ lifting the absolute Frobenius $x \mapsto x^p$ of $k$. We can then define a $W$-algebra $D$ generated by elements $F$ and $V$ subjected to the relations
\begin{align*}
FV = VF = p\text{, }\ \ \ Fx = \sigma(x)F\text{,  }\ \ \ 
xV=V\sigma(x)
\end{align*} 
for any $x\in W$.

The theory of Dieudonné (cf. \cite{oda69} Definition 3.12) provides an additive contravariant functor
\begin{align} \label{functordieudonne}
G \mapsto M(G)
\end{align} 
from the category of commutative finite $k$-group schemes of $p$-power order to the category of left $D$-modules. This functor is shown to be faithful and its essential image is given by the category of left $D$-modules of finite $W$-length: $M(G)$ is of $W$-length $r$ if and only if $G$ is of order $p^r$ (\cite{oda69} Corollary 3.16).

Now, let $X$ be an abelian variety over $k$ and consider the $k$-vector space $H^1_{\dR}(X/k)$ as a $W$-module via the canonical map $W \to k$. Then one can endow $H^1_{\dR}(X/k)$ with the structure of a $D$-module, the action of $F$ (resp. $V$) being induced by the relative Frobenius on $X$ (resp. the Cartier operator in degree 1); we refer to \cite{oda69} Definition 5.3 and Definition 5.6 for further details. This construction is functorial in the sense that for any morphism $\varphi:X \to Y$ of abelian varieties over $k$, if we endow $H^1_{\dR}(X/k)$ and $H^1_{\dR}(Y/k)$ with the preceding $D$-module structure, then the induced morphism on de Rham cohomology $\varphi^*: H^1_{\dR}(Y/k) \to H^1_{\dR}(X/k)$ is $D$-linear.

In the next statement, for any abelian variety $X$ over $k$, we regard $H^1_{\dR}(X/k)$ with the above $D$-module structure, and we denote its $p$-torsion subscheme by $X[p]$. Note that $X[p]$ is a commutative finite $k$-group scheme of order $p^{2\dim X}$.

\begin{theorem}[Oda, \cite{oda69} Corollary 5.11] \label{odathm}
The contravariant functors $X \mapsto M(X[p])$ and $X\mapsto H^1_{\dR}(X/k)$ from the category of abelian varieties over $k$ to the category of ($p$-torsion) $D$-modules of finite $W$-length are naturally equivalent.
\end{theorem}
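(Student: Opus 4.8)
The plan is to exhibit a $D$-linear isomorphism $H^1_{\dR}(X/k)\xrightarrow{\sim}M(X[p])$ that is natural in the abelian variety $X$; it then follows that both functors land in the category of $p$-torsion $D$-modules of finite $W$-length, though this is immediate anyway: $H^1_{\dR}(X/k)$ is a $k$-vector space of dimension $2\dim X$, hence of finite $W$-length $2\dim X$ and killed by $p$, while $X[p]$ is a finite commutative $k$-group scheme of order $p^{2\dim X}$, so $M(X[p])$ has $W$-length $2\dim X$ and is killed by $p$ (multiplication by $p$ is the zero endomorphism of $X[p]$ and $M$ is additive). So the whole content is the construction of the isomorphism.

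First I would interpolate through the $p$-divisible group $X[p^\infty]$, of height $2\dim X$. Contravariant Dieudonné theory extends to $p$-divisible groups and attaches to $X[p^\infty]$ a finite free $W$-module $\mathbb{D}$ equipped with semilinear operators $F$ and $V$ satisfying $FV=VF=p$; applying the exact contravariant functor to $0\to X[p]\to X[p^\infty]\xrightarrow{p}X[p^\infty]\to 0$ and using additivity identifies $M(X[p])$ with $\mathbb{D}/p\mathbb{D}$. On the de Rham side, the crucial input is the canonical comparison isomorphism $\mathbb{D}\cong H^1_{\cris}(X/W)$ carrying $F$ and $V$ to the crystalline Frobenius and Verschiebung; reducing modulo $p$ (and observing that $V$ descends because $FV=p$) yields $H^1_{\cris}(X/W)\otimes_W k\cong H^1_{\dR}(X/k)$ compatibly with $F$ and $V$. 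Composing the chain $M(X[p])\cong\mathbb{D}/p\mathbb{D}\cong H^1_{\cris}(X/W)\otimes_W k\cong H^1_{\dR}(X/k)$ produces the desired $D$-linear isomorphism. At the level of generality of Oda's original work one can instead argue by hand: the Hodge filtration $0\to H^0(X,\Omega^1_{X/k})\to H^1_{\dR}(X/k)\to H^1(X,\mathcal{O}_X)\to 0$ should match, under $M$, the two-step filtration on $M(X[p])$ coming from $0\to\ker F_{X/k}\to X[p]\to X^{(p)}[V]\to 0$ (here one uses the factorization $[p]_X=V_X\circ F_{X/k}$), one identifies the Dieudonné modules of the sub and quotient with the graded pieces of the Hodge filtration, checks that the two extensions agree, and verifies that the relative Frobenius on $H^1_{\dR}$ corresponds to the Dieudonné $F$ and the degree-one Cartier operator to the Dieudonné $V$.

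Naturality is then formal: every isomorphism above is functorial in $X$, and the $D$-module structures on $H^1_{\dR}(X/k)$ and on $M(X[p])$ are defined precisely so that pullback along a morphism of abelian varieties is $D$-linear; hence $X\mapsto\bigl(H^1_{\dR}(X/k)\xrightarrow{\sim}M(X[p])\bigr)$ is an isomorphism of contravariant functors, which is the asserted equivalence.

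The main obstacle is the comparison isomorphism together with the verification of the operator compatibilities — that the relative Frobenius on de Rham cohomology goes to the Dieudonné $F$ and the degree-one Cartier operator to the Dieudonné $V$ (respectively, that the identification of graded pieces in the hands-on approach respects $F$ and $V$). Making this precise requires either crystalline cohomology and its interpretation via Dieudonné crystals, or a careful direct analysis using Messing's universal vectorial extension $0\to\omega_{X^t}\to E(X)\to X\to 0$ (which computes $H^1_{\dR}(X/k)$ and whose formal completion recovers $X[p^\infty]$) together with the conjugate spectral sequence of $\Omega^\bullet_{X/k}$; the delicate point throughout is the bookkeeping of sign and normalization conventions for $F$ and $V$. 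The remaining ingredients — the length count, exactness of the Dieudonné functor, reduction modulo $p$, and naturality — are formal.
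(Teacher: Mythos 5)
The paper does not actually prove this statement: it is quoted from Oda's 1969 paper (Corollary 5.11) and used as a black box, so there is no internal argument to compare yours against. Your first route --- contravariant Dieudonné theory of the $p$-divisible group $X[p^\infty]$, the identification of $M(X[p])$ with the reduction mod $p$ of its Dieudonné module, the comparison of that module with $H^1_{\cris}(X/W)$, and reduction modulo $p$ to $H^1_{\dR}(X/k)$ --- is the standard modern derivation, and the surrounding formal steps (length count, exactness and additivity of $M$, torsion-freeness, naturality) are handled correctly. But you explicitly defer the comparison isomorphism and the verification that it intertwines $F$ and $V$ with the relative Frobenius and the degree-one Cartier operator, and that is where the entire content of Oda's theorem sits; as written, the proposal reduces the statement to a theorem of essentially the same depth (available in Berthelot--Breen--Messing, which this paper cites for other purposes, or in Oda's original work) rather than proving it.

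In the hands-on alternative there is, moreover, a concrete misstep. Applying the contravariant functor to $0\to\ker F_{X/k}\to X[p]\to X^{(p)}[V]\to 0$ exhibits $M(X^{(p)}[V])$ as a \emph{submodule} of $M(X[p])$, and since the surjection $X[p]\to X^{(p)}[V]$ is the restriction of $F_{X/k}$, this submodule is exactly $\mathrm{im}(F)=\ker(V)$ inside $M(X[p])$. The Hodge subspace $H^0(X,\Omega^1_{X/k})\subset H^1_{\dR}(X/k)$, by contrast, is killed by Frobenius pullback and equals $\ker(F)=\mathrm{im}(V)$. Already for an ordinary elliptic curve these two lines are distinct (one is the unit-root/étale line, where $F$ is bijective and $V=0$; the other is the multiplicative line, where $F=0$ and $V$ is bijective), so the sub-to-sub matching of the Hodge filtration with the filtration induced by that exact sequence cannot hold. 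The correct identification of $H^0(X,\Omega^1_{X/k})$ inside $M(X[p])$ goes through the image of Verschiebung, i.e. a $\sigma$-twist of $M(\ker F_{X/k})$, or equivalently through the dual abelian variety and Cartier duality of $X[p]$, which is precisely the bookkeeping Oda carries out; this is a genuine sub-versus-quotient and twist issue, not merely the sign and normalization conventions you flag, so if you pursue that route this step has to be redone.
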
 

\begin{lemma} \label{rigp}
Let $k$ be a perfect field of characteristic $p>2$. Then $\underline{B}_g$ is rigid over $k$.
\end{lemma}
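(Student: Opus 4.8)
The plan is to argue somewhat as in Lemma \ref{rig0}, reducing the triviality of the automorphism to a statement in linear algebra and then bootstrapping; the new ingredient is Oda's theorem, which in characteristic $p$ lets one recover the $p$-torsion group scheme $X[p]$ from the de Rham cohomology $H^1_{\dR}(X/k)$. So let $(X,\lambda,b)$ be an object of $\mathcal{B}_g$ lying over $k$ and let $\varphi:X\to X$ be a $k$-automorphism of $(X,\lambda)$ with $\varphi^*b=b$; I must show that $\varphi=\id_X$. Since $b$ is a basis of the $k$-vector space $H^1_{\dR}(X/k)$, the condition $\varphi^*b=b$ says exactly that $\varphi^*$ is the identity of $H^1_{\dR}(X/k)$ (not merely of the Hodge subspace $F^1(X/k)$). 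Using that $\varphi^*$ is $D$-linear and invoking the naturality of Oda's equivalence (Theorem \ref{odathm}), the endomorphism $M(\varphi[p])$ of $M(X[p])$ is then also the identity; and since the Dieudonné functor $G\mapsto M(G)$ is faithful (\cite{oda69} Corollary 3.16), we conclude that $\varphi[p]=\id_{X[p]}$ as automorphisms of the finite $k$-group scheme $X[p]$.

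Next I would translate this into the endomorphism ring. Set $\psi\defeq\varphi-\id_X\in\End_k(X)$. The equality $\varphi[p]=\id_{X[p]}$ means that $\psi$ kills the subgroup scheme $X[p]=\ker([p]_X)$; since $[p]_X$ is an isogeny, $\psi$ factors through it, so there is $\psi_1\in\End_k(X)$ with $\psi=p\,\psi_1$, i.e.
\begin{align*}
\varphi=\id_X+p\,\psi_1\text{.}
\end{align*}

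Finally I would use that the group $\Aut_k(X,\lambda)$ is finite (as it is over any field, by positivity of the Rosati involution attached to $\lambda$): thus $\varphi$ has finite order, so the commutative subalgebra $\QQ[\varphi]$ of the semisimple $\QQ$-algebra $\End_k(X)\tensor\QQ$ is a finite product of cyclotomic fields, $\QQ[\varphi]\cong\prod_i\QQ(\zeta_{d_i})$, with $\varphi$ corresponding to the tuple $(\zeta_{d_i})_i$. Since $\psi_1=p^{-1}(\varphi-\id_X)$ belongs to $\End_k(X)$, it is integral over $\ZZ$, hence each component $p^{-1}(\zeta_{d_i}-1)$ lies in $\ZZ[\zeta_{d_i}]$. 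A valuation computation at a prime $\mathfrak{P}$ of $\ZZ[\zeta_d]$ above $p$ then forces every $d_i$ to equal $1$: if $d\ge2$ and $p^{-1}(\zeta_d-1)\in\ZZ[\zeta_d]$, then $\zeta_d\equiv1$ modulo $\mathfrak{P}$, which (reduction modulo $\mathfrak{P}$ being injective on roots of unity of order prime to $p$) forces $d$ to be a power of $p$, say $d=p^a$ with $a\ge1$; but then $\zeta_{p^a}-1$ is a uniformizer at $\mathfrak{P}$ whereas $v_{\mathfrak{P}}(p)=p^{a-1}(p-1)\ge2$, so $v_{\mathfrak{P}}\bigl(p^{-1}(\zeta_{p^a}-1)\bigr)=1-p^{a-1}(p-1)<0$, a contradiction. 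Hence $\QQ[\varphi]=\QQ$ and $\varphi=\id_X$.

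I expect the first step --- deducing the scheme-theoretic identity $\varphi[p]=\id_{X[p]}$ from the purely de Rham statement $\varphi^*=\id$ --- to be the conceptual heart of the argument, and the only part requiring a genuinely new input, namely Oda's theorem; everything afterwards is elementary. The closing arithmetic is precisely where the hypothesis $p>2$ enters: for $p=2$ the case $d=2$ is permitted, $\zeta_2=-1$, and $[-1]$ is exactly the automorphism responsible for the non-representability of $\mathcal{B}_g$ over $\mathbf{F}_2$.
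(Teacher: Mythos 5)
Your proof is correct, and its conceptual core is exactly the paper's: use Oda's theorem (Theorem \ref{odathm}) together with the faithfulness of the Dieudonné functor to upgrade the de Rham hypothesis $\varphi^*=\id$ to the scheme-theoretic statement $\varphi|_{X[p]}=\id_{X[p]}$. Where you diverge is the endgame: the paper simply cites Mumford (\emph{Abelian varieties}, IV.21, Theorem 5), which states that an automorphism of a polarized abelian variety acting as the identity on the $n$-torsion for some $n\ge 3$ is the identity, whereas you reprove this fact in the case $n=p$ by hand --- factoring $\varphi-\id_X=p\,\psi_1$ through $[p]$, using finiteness of $\Aut_k(X,\lambda)$ (Rosati positivity) to get finite order, and then running the cyclotomic integrality/valuation argument on $p^{-1}(\zeta_d-1)$, including the correct handling of the prime-to-$p$ part of $d$ via injectivity of reduction on prime-to-$p$ roots of unity. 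That argument is sound (it is essentially Serre's proof of the cited theorem, specialized to $n=p$), and it has the merit of being self-contained and of making visible exactly where $p>2$ enters --- your closing remark that $p=2$, $d=2$, $\zeta_2=-1$ reproduces the $[-1]$ obstruction to representability over $\mathbf{F}_2$ matches the paper's discussion at the start of Section \ref{representability}. The only cost is that you must invoke the standard facts that $\End_k(X)$ is a finite $\ZZ$-module and that $\Aut_k(X,\lambda)$ is finite, which are precisely the inputs to Mumford's theorem; so in effect you have inlined the proof the paper outsources.
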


\begin{proof}
Let $(X,\lambda)$ be a principally polarized abelian variety over $k$ of dimension $g$ and $\varphi :X \to X$ be a $k$-automorphism of $(X,\lambda)$.

If $\varphi$ preserves a symplectic-Hodge basis of $(X,\lambda)_{/k}$, then in particular $\varphi^* : H^1_{\dR}(X/k) \to H^1_{\dR}(X/k)$ is the identity; a fortiori, $\varphi$ induces the identity on $H^1_{\dR}(X/k)$ regarded as a $D$-module. Then, by Theorem \ref{odathm}, $\varphi$ induces the identity on the $D$-module $M(X[p])$. As the functor $G\mapsto M(G)$ in (\ref{functordieudonne}) is faithful, $\varphi$ restricts to the identity on the $p$-torsion subscheme $X[p]$ of $X$. As $\varphi$ preserves, in addition, the polarization $\lambda$ on $X$, and since $p\ge 3$, then necessarily $\varphi=\id_X$ (cf. \cite{mumford70} IV.21, Theorem 5).   
\end{proof}

Recall the following version of the classical ``rigidity lemma'' for abelian schemes which follows from the arguments in the proof of Proposition 6.1 in \cite{GIT94}.

\begin{lemma} \label{rigidite}
Let $A$ be a local Artinian ring, and $X$ be an abelian scheme over $A$. If $\varphi:X \to X$ is an endomorphism of $A$-group schemes restricting to the identity on the closed fiber of $X \to \Spec A$, then $\varphi=\id_X$.
\end{lemma}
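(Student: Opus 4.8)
The plan is to reduce to the case of an Artinian ring with algebraically closed residue field by faithfully flat base change, and then to exploit the standard infinitesimal-lifting rigidity argument for abelian schemes. First I would recall that to prove $\varphi = \id_X$ it suffices to prove that $\varphi$ and $\id_X$ agree after the faithfully flat base change $A \to A'$, where $A'$ is a local Artinian ring with algebraically closed residue field obtained by completing a suitable strict henselization (or simply by a standard limit argument from \cite{EGAIV4}); since the formation of $X$, $\varphi$, and $\id_X$ is compatible with base change, and since $X \to \Spec A'$ is again an abelian scheme whose closed fiber equals the base change of the closed fiber of $X \to \Spec A$, we may and do assume that the residue field $k$ of $A$ is algebraically closed.

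Next, I would set up the deformation-theoretic induction. Let $\mm$ be the maximal ideal of $A$; since $A$ is Artinian we have $\mm^{n+1} = 0$ for some $n$, so it is enough to show inductively that if $\varphi$ restricts to the identity on $X_{n-1} := X \times_{\Spec A} \Spec(A/\mm^{n})$ then it restricts to the identity on $X_n := X \times_{\Spec A} \Spec(A/\mm^{n+1})$. Writing $I := \mm^{n}/\mm^{n+1}$, which is a $k$-vector space with $I^2 = 0$ inside $A/\mm^{n+1}$, the morphism $\psi := \varphi_n - \id$ (difference taken using the group law on $X$, legitimate because $X_n$ is a group scheme) is then a morphism $X_n \to X_n$ that is trivial modulo $I$. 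The key point, which is the content of the rigidity argument in \cite{GIT94} Proposition 6.1, is that such a $\psi$ factors as $X_n \to \Spec(A/\mm^{n+1}) \to X_n$, i.e. it is constant along the fibres: indeed $\psi$ lands in the kernel of reduction, an abelian-scheme-theoretic infinitesimal neighbourhood of the identity section which is isomorphic to a vector group $\omega_X^{\vee} \otimes_k I$ pulled back from the base, and any morphism from the proper, geometrically connected scheme $X_0$ over the field $k$ to an affine scheme is constant; since in addition $\psi$ is a group homomorphism (both $\varphi_n$ and $\id$ are) and sends the identity section to the identity section, $\psi$ is the trivial homomorphism. Hence $\varphi_n = \id$ on $X_n$, completing the induction.

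The main obstacle — really the only substantive step — is justifying that the deviation $\psi$ takes values in a vector group pulled back from the base, so that properness and connectedness of the closed fibre force it to be constant; this is where one uses that $X \to \Spec A$ is smooth and proper with geometrically connected fibres, and it is exactly the computation carried out in the proof of \cite{GIT94} Proposition 6.1. Everything else (the faithfully flat reduction to algebraically closed residue field, the dévissage along the powers of $\mm$, and the observation that a fibrewise-constant homomorphism fixing the identity section is the identity) is formal. Since the lemma as stated already cites \cite{GIT94} Proposition 6.1 for this core input, in the write-up I would simply isolate the inductive dévissage and invoke \emph{loc.\ cit.} for the one-step rigidity, rather than reproving it.
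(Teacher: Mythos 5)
Your proposal is correct, but it is more elaborate than the route the paper intends: the paper gives no written proof at all and simply invokes the arguments of \cite{GIT94}, Proposition 6.1, which settle the lemma in one stroke and without any dévissage. Namely, set $\psi \defeq \varphi - \id_X$ (difference for the group law); since $\varphi$ restricts to the identity on the closed fibre, $\psi$ contracts the closed fibre to the image of the identity section, and because $\Spec A$ is a one-point space, the rigidity argument of \emph{loc.\ cit.} --- using only properness of $X \to \Spec A$ and $p_*\mathcal{O}_X = \mathcal{O}_{\Spec A}$ --- shows directly that $\psi$ factors through a section $\sigma : \Spec A \to X$; composing with the identity section gives $\sigma = e$, hence $\varphi = \id_X$. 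Your proof instead runs an induction along the powers of $\mm$ and uses the deformation theory of smooth group schemes (the kernel of reduction along a square-zero extension being a vector group); this is sound, the decisive point being the identification $\Gamma(X_n, I\mathcal{O}_{X_n}) \cong I \otimes_k H^0(X_0,\mathcal{O}_{X_0}) = I$ (flatness of $X$ over $A$ plus $H^0(X_0,\mathcal{O}_{X_0})=k$), which forces the deviation to be constant along the fibres, after which the homomorphism property and $\psi \circ e = e$ kill it. Comparing the two: your argument needs smoothness of $X$ over $A$ and the commutative group structure, but it is the pattern that generalizes to formal deformation-theoretic situations; the GIT-style argument needs neither smoothness nor the group structure beyond forming $\psi$, requires no induction, and is what the citation in the paper actually refers to. Finally, your preliminary reduction to an algebraically closed residue field is superfluous (and completing a strict henselization would only produce a separably closed one; one would need a gonflement in the sense of EGA to do better): $H^0(X_0,\mathcal{O}_{X_0})=k$ holds for an abelian variety over an arbitrary field, which is all the constancy step uses.
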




\begin{prop} \label{rigid}
The functor $\underline{B}_g$ is rigid over $\ZZ[1/2]$.
\end{prop}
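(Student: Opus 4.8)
The plan is to reduce the statement over $\ZZ[1/2]$ to the fibers over algebraically closed fields, where Lemmas \ref{rig0} and \ref{rigp} already apply, using the rigidity lemma \ref{rigidite} as the bridge. So fix a $\ZZ[1/2]$-scheme $U$, an object $(X,\lambda)$ of $\mathcal{A}_g$ over $U$, and a $U$-automorphism $\varphi$ of $(X,\lambda)$ with $\varphi^*b = b$ for some symplectic-Hodge basis $b$; we must show $\varphi = \id_X$. Since being the identity is a condition that can be checked on stalks, and since a morphism of $U$-schemes is the identity iff its base change to every $\Spec \mathcal{O}_{U,\mathfrak{u}}$ is, we may assume $U = \Spec A$ with $A$ local; moreover, by a standard limit argument (as in the proof of Lemma \ref{rig0}, invoking \cite{EGAIV3} and \cite{EGAIV4}) we may reduce to the case where $A$ is a local \emph{Noetherian} ring. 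A morphism of separated $A$-schemes of finite type that agrees with $\id_X$ on every Artinian quotient $A/\mathfrak{m}^n$ is the identity (by the Krull intersection theorem applied to the ideal sheaf of the graph, or by formal GAGA-type faithfulness of completion on a Noetherian local ring); hence we may assume $A$ is Artinian local.

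Now $A$ is Artinian local with residue field $k$, and since $2$ is invertible in $A$, the characteristic of $k$ is $0$ or $p > 2$. Let $(X_0, \lambda_0, b_0)$ and $\varphi_0$ denote the closed fiber over $k$. The key point is that the formation of $H^1_{\dR}(X/A)$, of the Hodge subbundle $F^1(X/A)$, and of the symplectic form $\langle\ ,\ \rangle_\lambda$ is compatible with base change (the first by \cite{BBM82} 2.5, the last by Remark \ref{remarkbasechange}); hence $b_0$ is a symplectic-Hodge basis of $(X_0,\lambda_0)_{/k}$, and the functoriality of $H^1_{\dR}$ together with $\varphi^*b = b$ gives $\varphi_0^* b_0 = b_0$. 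Replacing $k$ by an algebraic closure $\bar{k}$ and base-changing accordingly (again using base-change compatibility of all the structures involved), Lemma \ref{rig0} (if $\mathrm{char}\, k = 0$) or Lemma \ref{rigp} (if $\mathrm{char}\, k = p > 2$) yields $\varphi_{\bar k} = \id$, hence $\varphi_0 = \id_{X_0}$. By the rigidity lemma for abelian schemes over Artinian local rings (Lemma \ref{rigidite}), an endomorphism of the abelian scheme $X/A$ that restricts to the identity on the closed fiber is the identity, so $\varphi = \id_X$, as desired.

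The main obstacle is purely bookkeeping rather than conceptual: one must be careful that the reduction to the Artinian case is legitimate — i.e. that rigidity of $\underline{B}_g$ can indeed be tested after passing to local, then Noetherian, then Artinian base — and that at each stage the symplectic-Hodge structure really does descend or restrict correctly. The limit argument eliminating Noetherian hypotheses is the most delicate piece, but it is entirely parallel to the one already carried out in Lemma \ref{rig0}; alternatively, one can avoid it by noting that the rigidity statement is local on $U$ and testing it directly on complete local Noetherian rings via their Artinian quotients. Once the base ring is Artinian local, everything is immediate from the three lemmas already established.
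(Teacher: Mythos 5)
Your proposal is correct and takes essentially the same route as the paper's proof: the same ingredients (Lemmas \ref{rig0} and \ref{rigp} on geometric fibers, Lemma \ref{rigidite}, Krull's intersection theorem, and elimination of noetherian hypotheses via \cite{EGAIV3}, \cite{EGAIV4}) are combined in the same way, the only difference being that the paper handles the noetherian case globally via the closed subscheme $Z\subseteq U$ where $\varphi=\id$ (which contains all closed points and is open by Lemma \ref{rigidite} plus Krull), whereas you localize at points and pass to Artinian quotients. The one small caveat is that your auxiliary claim, as stated for separated finite-type $A$-schemes, really uses properness of $X$ (so that the support of the equalizer ideal, if nonempty, meets the closed fiber before Krull's theorem is applied) --- harmless here, since abelian schemes are proper.
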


\begin{proof}
Let $U$ be a $\ZZ[1/2]$-scheme, $(X,\lambda)$ be an object of $\mathcal{A}_{g}$ lying over $U$, and $\varphi$ be an automorphism of $(X,\lambda)$ in the fiber category $\mathcal{A}_g(U)$ preserving an element $b$ of $\underline{B}_g(X,\lambda)$. We must show that $\varphi=\id_X$. This being a local property over $U$, we can assume that $U$ is affine.

Suppose that $U$ is noetherian. By Lemmas \ref{rig0} and \ref{rigp}, for every geometric point $\overline{u}$ of $U$, we have $\varphi_{X_{\overline{u}}}=\id_{X_{\overline{u}}}$. Let $Z$ be the closed subscheme of $U$ where $\varphi = \id$. Then $Z$ contains every closed point of $U$. By  Lemma \ref{rigidite}, and Krull's intersection theorem, $Z$ is also an open subscheme of $U$; hence $Z=U$, which amounts to saying that $\varphi = \id_X$.

In general, by ``elimination of noetherian hypothesis'' (cf. \cite{EGAIV3}, 8.8, 8.9, 8.10, 12.2.1, and \cite{EGAIV4}, 17.7.9), there exists an affine noetherian scheme $U_0$ under $U$, and a principally polarized abelian scheme $(X_0,\lambda_0)$ over $U_0$ endowed with a symplectic-Hodge basis $b_0$, and with an $U_0$-automorphism $\varphi_0$, such that $\varphi_0^*b_0=b_0$, and $(X,\lambda)$ (resp. $b$, resp. $\varphi$) is deduced from $(X_0,\lambda_0)$ (resp. $b_0$, resp. $\varphi_0$) by the base change $U\to U_0$. The preceding paragraph shows that $\varphi_0=\id_{X_0}$, hence $\varphi = \id_X$.
\end{proof}

\subsection{Proof of Theorem \ref{repr}}

We briefly recollect some facts on quotients of schemes by actions of finite groups. 

Let $S$ be a scheme and $\Gamma$ be a finite constant group scheme over $S$, that is, an $S$-group scheme associated to a finite abstract group $|\Gamma|$. 

For any $S$-scheme $X$, an $S$-action of $\Gamma$ on $X$ is equivalent to a morphism of groups $|\Gamma| \to \Aut_S(X)$. If $X$ is an $S$-scheme, we say that an action of $\Gamma$ on $X$ is \emph{free} if the action of $\Gamma(U)$ on $X(U)$ is free for any $S$-scheme $U$.

The next lemma easily follows from \cite{SGA103} V and \cite{knutson71} IV.1.

\begin{lemma} \label{finiteaction}
Let $S$ be an affine noetherian scheme and $X$ be a quasi-projective $S$-scheme equipped with an $S$-action of a finite constant group scheme $\Gamma$ over $S$. Then
\begin{enumerate}
     \item There exists a quasi-projective $S$-scheme $Y$ and a $\Gamma$-invariant surjective finite morphism $p:X \to Y$ such that the natural morphism of sheaves of rings over $Y$
\begin{align*}
\mathcal{O}_Y\to (p_*\mathcal{O}_X)^{|\Gamma|}
\end{align*}
is an isomorphism. We denote $Y\eqdef X/\Gamma$.
     \item If moreover the action of $\Gamma$ on $X$ is free, then $p$ is étale and
\begin{align*}
\Gamma \times_S X &\to X \times_Y X\\
  (\gamma,x) &\mapsto (x,\gamma\cdot x)
\end{align*}
is an isomorphism.
\end{enumerate}
\end{lemma}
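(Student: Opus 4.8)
The plan is to build $Y$ by gluing local affine quotients, then to verify quasi-projectivity, and finally to analyse the free case separately.

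First I would reduce to the affine situation. Since $X$ is quasi-projective over the affine scheme $S$, it is separated and every finite subset of $X$ is contained in an affine open subscheme; in particular every $\Gamma$-orbit is, and intersecting the finitely many $\Gamma$-translates of such an affine open (an intersection of affine opens in a separated scheme, hence again affine) yields a $\Gamma$-stable affine open containing the orbit. Thus $X$ is covered by $\Gamma$-stable affine opens $\Spec A$, where $S=\Spec R$. For each such $A$, the invariant ring $A^{\Gamma}=A^{|\Gamma|}$ is a finitely generated $R$-algebra by Noether's finiteness theorem (this is where we use that $R$ is noetherian), and each $a\in A$ is a root of the monic polynomial $\prod_{\gamma\in|\Gamma|}(T-\gamma(a))\in A^{\Gamma}[T]$, so $A$ is finite over $A^{\Gamma}$. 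Hence $\Spec A\to\Spec A^{\Gamma}$ is a finite surjective $\Gamma$-invariant morphism realizing $\Spec A^{\Gamma}$ as the quotient, with $\mathcal{O}\stackrel{\sim}{\to}(p_*\mathcal{O})^{|\Gamma|}$ tautologically.

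Next I would glue these local quotients. This is exactly the content of \cite{SGA103} V on quotients of a scheme by a finite group when every orbit lies in an affine open (see also \cite{knutson71} IV.1): the $\Spec A^{\Gamma}$ glue to a scheme $Y$ of finite type over $S$ equipped with a finite surjective $\Gamma$-invariant $p:X\to Y$ satisfying $\mathcal{O}_Y\stackrel{\sim}{\to}(p_*\mathcal{O}_X)^{|\Gamma|}$; separatedness of $Y$ over $S$ is automatic, since $p$ and $p\times_S p$ are finite surjective, so the image of the closed diagonal of $X/S$ is closed in $Y\times_S Y$ and coincides with that of $\Delta_{Y/S}$. It then remains to show $Y$ is quasi-projective over $S$. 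For this I would take an ample line bundle $L$ on $X$ and replace it by $M\defeq\bigotimes_{\gamma\in|\Gamma|}\gamma^*L$, which is still ample and carries a canonical $\Gamma$-linearization. The stabilizer of any point of $X$ then acts trivially on the fibre of $M^{\otimes|\Gamma|}$, so by the descent criterion for $\Gamma$-linearized invertible sheaves (\cite{GIT94}, or \cite{SGA103} V) this sheaf descends to an invertible sheaf $N$ on $Y$ with $p^*N\cong M^{\otimes|\Gamma|}$; as $p$ is finite surjective and $p^*N$ is ample, $N$ is ample, so $Y$ is quasi-projective over $S$. This proves (1).

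For (2), assuming the action free, I would again work on a $\Gamma$-stable affine open $\Spec A$ with $B\defeq A^{\Gamma}$. Freeness of the $\Gamma$-action translates into the assertion that the $A$-algebra homomorphism
\begin{align*}
A\tensor_B A\stackrel{\sim}{\longrightarrow}\prod_{\gamma\in|\Gamma|}A\text{,}\qquad a\tensor b\mapsto (a\,\gamma(b))_{\gamma}\text{,}
\end{align*}
is an isomorphism — the classical affine form of the statement that $\Spec A\to\Spec B$ is a $\Gamma$-torsor, treated in \cite{SGA103} V. Reducing this isomorphism modulo each prime of $B$ shows the geometric fibres of $\Spec A\to\Spec B$ are finite étale of rank $|\Gamma|$, and, combined with Nakayama's lemma over the (noetherian) local rings of $B$, this forces $A$ to be finite locally free of rank $|\Gamma|$ over $B$; hence $p$ is finite flat, and the fibre computation shows it is unramified, so $p$ is finite étale. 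Globalizing the displayed isomorphism then identifies $\Gamma\times_S X$ with $X\times_Y X$ via $(\gamma,x)\mapsto(x,\gamma\cdot x)$, which is (2).

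I expect the only genuinely non-formal points to be the gluing of the local invariant-spectra into a scheme and the descent of the ample line bundle — both of which are precisely the classical results of \cite{SGA103} V invoked in the statement; everything else is a routine reduction to the affine situation $\Spec A^{\Gamma}\subseteq X=\Spec A$ together with standard commutative algebra.
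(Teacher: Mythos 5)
Your overall route is the one the paper intends: the paper itself gives no argument beyond citing \cite{SGA103} V and \cite{knutson71} IV.1, and your reductions (every orbit lies in a $\Gamma$-stable affine open, Noether/Artin--Tate finiteness of $A^{\Gamma}$, integrality and surjectivity of $\Spec A \to \Spec A^{\Gamma}$, gluing, separatedness of $Y$) are correct. The genuine weak point is the descent step you use for quasi-projectivity. The criterion you invoke -- ``stabilizers act trivially on the fibres of $M^{\otimes|\Gamma|}$, hence it descends'' -- is not in \cite{GIT94} (whose descent statements concern free actions) nor in \cite{SGA103} V, and it is problematic in exactly the generality needed here: over $\ZZ[1/2]$ a stabilizer $H$ may have order divisible by a residue characteristic, and then a $1$-cocycle with values in $1+\mathfrak{m}_x$ (i.e.\ trivial action on the fibre) need not be a coboundary, since $H^1(H,1+\mathfrak{m}_x)$ can be nonzero in wildly ramified situations; so triviality on fibres alone does not justify descent. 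What rescues your argument is an exponent argument rather than a fibre argument: over the semilocal ring $A$ of a fibre $p^{-1}(y)$ every invertible module is free, so the $\Gamma$-linearization of $M$ is encoded by a class in $H^1(\Gamma,A^{\times})$, and this group is killed by $|\Gamma|$ (restriction--corestriction to the trivial subgroup); hence $M^{\otimes|\Gamma|}$ admits a $\Gamma$-invariant generator near every orbit, which is exactly what is needed for $(p_*M^{\otimes|\Gamma|})^{\Gamma}$ to be invertible and to pull back isomorphically to $M^{\otimes|\Gamma|}$. (Alternatively, follow the classical argument and produce $\Gamma$-invariant sections of powers of $M$ whose non-vanishing loci are $\Gamma$-stable affine opens.) With that repair, your final step -- $p$ finite surjective and $p^*N$ ample imply $N$ ample -- is valid, and (1) follows.

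In (2) there are two problems. First, the isomorphism $A\otimes_B A\cong\prod_{\gamma}A$ is not a ``translation'' of freeness: it is essentially the conclusion of \cite{SGA103} V (an admissible action with trivial inertia groups makes $X\to Y$ an étale $\Gamma$-torsor), so as written you are assuming what is to be proved; what freeness gives you directly is only that all inertia groups are trivial. Second, the deduction ``constant geometric fibre rank $|\Gamma|$ plus Nakayama implies $A$ is finite locally free over $B$'' is invalid without reducedness of $B$ (over $B=k[\epsilon]/(\epsilon^2)$ the module $k$ has constant fibre rank $1$ and is not free), and $B$ has no reason to be reduced here. The efficient fix is to observe that freeness in the sense of the lemma is equivalent to triviality of the inertia groups and then quote \cite{SGA103} V for étaleness and the torsor isomorphism -- which is precisely what the paper does -- or, if you want a proof, to establish flatness by passing to strict henselizations of $B$ at points of $Y$, where trivial inertia forces $A\otimes_B B^{\rm sh}$ to be a finite product of copies of $B^{\rm sh}$, after which étaleness and the isomorphism $\Gamma\times_S X\cong X\times_YX$ follow.
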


\begin{obs} \label{remfiniteaction}
Part (2) in the above lemma implies that,  when the action of $\Gamma$ on $X$ is free, then the stacky quotient $[X/\Gamma]$ (cf. \cite{olsson16} Example 8.1.12) is representable by the scheme $X/\Gamma$.
\end{obs}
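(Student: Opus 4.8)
The plan is to recognize that part~(2) of Lemma~\ref{finiteaction} exhibits $p : X \to Y$ as a $\Gamma$-torsor for the étale topology, and then to invoke the general principle that the quotient stack of a torsor by its structure group recovers the base.

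Recall (\cite{olsson16} Example 8.1.12) that, as $\Gamma$ is a finite constant --- hence étale --- group scheme over $S$, the quotient stack $[X/\Gamma]$ is the $S$-stack whose fiber category over an $S$-scheme $T$ is the groupoid of pairs $(E,h)$, where $E \to T$ is an étale $\Gamma$-torsor and $h : E \to X$ is a $\Gamma$-equivariant $S$-morphism, the arrows being the $\Gamma$-equivariant isomorphisms of torsors over $T$ compatible with the maps to $X$. The first point to make is that Lemma~\ref{finiteaction} says precisely that $p : X \to Y$ is such a torsor: $p$ is finite, étale, and surjective, hence faithfully flat and an étale cover of $Y$, while the isomorphism
\begin{align*}
\Gamma \times_S X \stackrel{\sim}{\longrightarrow} X \times_Y X\text{, }\quad (\gamma,x) \longmapsto (x,\gamma\cdot x)
\end{align*}
from part~(2) is exactly the torsor condition for $p$.

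Next I would construct a morphism of $S$-stacks $\Phi : Y \to [X/\Gamma]$ by pulling back this universal torsor: to an object $g : T \to Y$ of $Y(T)$ I associate the pair $(X\times_Y T,\ \pr_X)$, where $X\times_Y T \to T$ is the base change of $p$ along $g$ --- again an étale $\Gamma$-torsor --- and $\pr_X : X\times_Y T \to X$ is the equivariant projection. This assignment is compatible with base change in $T$, so it defines $\Phi$.

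Finally I would verify that $\Phi$ induces an equivalence on each fiber category $Y(T) \to [X/\Gamma](T)$. For essential surjectivity, given $(E,h)$ over $T$, the composite $p\circ h : E \to Y$ is $\Gamma$-invariant, hence factors through $E/\Gamma = T$ to give $g : T \to Y$; the $\Gamma$-equivariant $T$-morphism $E \to X\times_Y T$ determined by $h$ and the structure map is then a morphism of $\Gamma$-torsors over $T$, therefore an isomorphism, and it identifies $(E,h)$ with $\Phi(g)$. For full faithfulness, I would use that $\Hom_{Y(T)}(g_1,g_2)$ is empty unless $g_1=g_2$, and that any $\Gamma$-equivariant isomorphism between the pullbacks of $p$ along $g_1$ and along $g_2$ commuting with the projections to $X$ forces, by surjectivity of $p$, the equality $g_1=g_2$, being then the identity. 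Hence $\Phi$ is an equivalence and $[X/\Gamma]$ is representable by the scheme $Y=X/\Gamma$.

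The argument is largely formal once the torsor structure is available; the one step I would treat as the crux is the identification of part~(2) of Lemma~\ref{finiteaction} with the torsor condition, together with the observation that, because $\Gamma$ is étale, both the notion of $\Gamma$-torsor and the formation of $[X/\Gamma]$ are taken for the étale topology used throughout (cf.\ \ref{stacks}), so that the comparison $\Phi$ is legitimate and the descent implicit in essential surjectivity is valid.
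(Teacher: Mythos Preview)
Your argument is correct. The paper, however, does not supply a proof of this statement: it is recorded as a \emph{Remark} and left unproved, the claim being standard once one has the torsor description of part~(2) of Lemma~\ref{finiteaction} and the definition of the quotient stack from \cite{olsson16} Example 8.1.12. Your write-up thus fills in the details the paper deliberately omits, following exactly the expected line: identify $p:X\to Y$ as an étale $\Gamma$-torsor via part~(2), build the comparison $Y\to[X/\Gamma]$ by pulling back this torsor, and check it is an equivalence fiberwise. One minor point worth tightening in the full-faithfulness step is the verification that the only automorphism of $X\times_YT$ over $T$ compatible with $\pr_X$ is the identity; this follows because $(\pr_X,\pr_T)$ is a monomorphism, but you might make that explicit rather than leaving it implicit in ``being then the identity''.
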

 




\begin{proof}[Proof of Theorem \ref{repr}]
Recall from \cite{GIT94} Theorem 7.9 (cf. \cite{olsson12} proof of Theorem 2.1.11) that there exists a quasi-projective scheme $A_{g,1,4}$ over $\ZZ[1/2]$ endowed with an action by the constant finite group scheme $\Gamma = \GL_{g}(\ZZ/4\ZZ)_{\ZZ[1/2]}$ over $\ZZ[1/2]$, and with a surjective étale morphism $A_{g,1,4} \to \mathcal{A}_{g,\ZZ[1/2]}$ inducing an isomorphism of the stacky quotient $[A_{g,1,4}/\Gamma]$ with $\mathcal{A}_{g,\ZZ[1/2]}$; namely, $A_{g,1,4}$ is a scheme representing the functor
\begin{align*}
\mathcal{A}_g^{\opp} &\to \Sets\\
              (X,\lambda)_{/U}&\mapsto \text{Isom}_{\textsf{GpSch}_{/U}}((\ZZ/4\ZZ)_U^{2g}, X[4])\text{.}
\end{align*}

As the morphism of Deligne-Mumford stacks over $\Spec \ZZ$
\begin{align*}
\pi_g : \mathcal{B}_g \to \mathcal{A}_g
\end{align*}
is representable by smooth affine schemes (Remark \ref{relrepr}), the fiber product
\begin{align*}
\mathcal{F} = A_{g,1,4}\times_{\mathcal{A}_{g,\ZZ[1/2]}}\mathcal{B}_{g,\ZZ[1/2]}
\end{align*}
is representable by a smooth affine scheme $B$ over $A_{g,1,4}$ via the first projection $\mathcal{F} \to A_{g,1,4}$. In particular, $B$ is affine and of finite type over $A_{g,1,4}$. Since $A_{g,1,4}$ is quasi-projective over $\ZZ[1/2]$, it follows that $B$ is a quasi-projective $\ZZ[1/2]$-scheme.

The action of $\Gamma$ on $A_{g,1,4}$ naturally induces an action of $\Gamma$ on $\mathcal{F}$, thus on $B$. As $\mathcal{B}_{g,\ZZ[1/2]}$ is an algebraic space by Proposition \ref{rigid} (cf. remark following Definition \ref{defrigid}), this action is free. Moreover, by the compatibility of quotients of stacks by group actions with base change (cf. \cite{romagny05} Proposition 2.6), the second projection $\mathcal{F} \to \mathcal{B}_{g,\ZZ[1/2]}$ induces an isomorphism of the stacky quotient $[B/\Gamma]$ with $\mathcal{B}_{g,\ZZ[1/2]}$.  Finally, by Lemma \ref{finiteaction} and Remark \ref{remfiniteaction}, we conclude that $\mathcal{B}_{g,\ZZ[1/2]}$ is representable by the quasi-projective $\ZZ[1/2]$-scheme $B/\Gamma$.
\end{proof}

\section{The vector bundle $T_{\mathcal{B}_g/\ZZ}$ and the higher Ramanujan vector fields} \label{ramvecfields}

Fix an integer $g\ge 1$. We define a presheaf $\mathcal{H}_g$ (resp. $\mathcal{F}_g$) of $\mathcal{O}_{\mathcal{A}_{g,\et}}$-modules on $\Et(\mathcal{A}_g)$ as follows. Let $(U,u)$ be an étale scheme over $\mathcal{A}_g$, and $(X,\lambda)$ be the principally polarized abelian scheme over $U$ corresponding to $u:U \to \mathcal{A}_g$. We put
\begin{align*}
\Gamma((U,u),\mathcal{H}_g)\defeq \Gamma(U,H^1_{\dR}(X/U)) \ \ \text{(resp. }\Gamma((U,u),\mathcal{F}_g) \defeq \Gamma(U,F^1(X/U))\text{)}
\end{align*}
If $(f,f^b):(U',u') \to (U,u)$ is a morphism in $\Et(\mathcal{A}_g)$, the restriction map is given by the comparison morphism $f^*H^1_{\dR}(X/U) \to H^1_{\dR}(X'/U')$ (resp. $f^*F^1(X/U)\to F^1(X'/U')$), where $(X',\lambda') = (X,\lambda)\times_UU'$. As the comparison morphism is actually an isomorphism (i.e. the formation of $H^1_{\dR}(X/U)$ (resp. $F^1(X/U)$) is compatible with base change), and $H^1_{\dR}(X/U)$ (resp. $F^1(X/U)$) is quasi-coherent, $\mathcal{H}_g$ (resp. $\mathcal{F}_g$) is a quasi-coherent sheaf over $\mathcal{A}_g$ (cf. \cite{olsson16} Lemma 4.3.3). We finally remark that $\mathcal{H}_g$ is actually a vector bundle of rank $2g$ over $\mathcal{A}_g$ and that $\mathcal{F}_g$ is a rank $g$ subbundle of $\mathcal{H}_g$.

\begin{obs}
The sheaf $\mathcal{H}_g$ should be thought as the first de Rham cohomology of the universal abelian scheme over $\mathcal{A}_g$, and $\mathcal{F}_g$ as its Hodge subbundle. Indeed, if $X_g$ denotes the universal abelian scheme over $B_g$ (cf. Theorem \ref{repr}), then, under the isomorphism $\mathcal{B}_{g,\ZZ[1/2]} \cong B_g$, the sheaf $\mathcal{H}_g$ gets identified with $H^1_{\dR}(X_g/B_g)$, and $\mathcal{F}_g$ with $F^1(X_g/B_g)$.
\end{obs}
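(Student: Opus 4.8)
\emph{Sketch of a proof.} Unwinding the notation, the claim asserts that, after restricting everything to $\ZZ[1/2]$, pulling back along $\pi_g$, and transporting along the isomorphism $\mathcal{B}_{g,\ZZ[1/2]}\cong B_g$ of Theorem~\ref{repr}, one obtains canonical identifications $\pi_g^*\mathcal{H}_g\cong H^1_{\dR}(X_g/B_g)$ and $\pi_g^*\mathcal{F}_g\cong F^1(X_g/B_g)$, compatible with the inclusions $\mathcal{F}_g\subset\mathcal{H}_g$ and $F^1\subset H^1_{\dR}$. The plan is to check this by étale descent from an atlas of $\mathcal{A}_{g,\ZZ[1/2]}$ on which the \emph{definition} of $\mathcal{H}_g$ and $\mathcal{F}_g$ applies literally, letting the base-change compatibility of $H^1_{\dR}$, of $F^1$, and of the exact sequence (\ref{hodgefiltr}) recalled in \S\ref{shb} do the rest.

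First I would name the universal objects. Since $B_g$ represents $\mathcal{B}_{g,\ZZ[1/2]}$, the identity of $B_g$ corresponds to a universal triple $(X_g,\lambda_g,b_g)$ over $B_g$, which is pulled back to the triple classified by $t$ under any morphism of schemes $t\colon T\to B_g$; composing with $\pi_g$, the morphism $u\colon B_g\cong\mathcal{B}_{g,\ZZ[1/2]}\to\mathcal{A}_{g,\ZZ[1/2]}$ classifies $(X_g,\lambda_g)$. The sheaves to be described are then the pullbacks $u^*\mathcal{H}_g$ and $u^*\mathcal{F}_g$ of these quasi-coherent sheaves.

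Then I would bring in the quasi-projective surjective étale atlas $A_{g,1,4}\to\mathcal{A}_{g,\ZZ[1/2]}$ from the proof of Theorem~\ref{repr}, carrying its tautological principally polarized abelian scheme $(X^{(4)},\lambda^{(4)})$. Viewing $A_{g,1,4}$ as an object of $\Et(\mathcal{A}_{g,\ZZ[1/2]})$ and restricting, one has $\mathcal{H}_g|_{A_{g,1,4}}=H^1_{\dR}(X^{(4)}/A_{g,1,4})$ and $\mathcal{F}_g|_{A_{g,1,4}}=F^1(X^{(4)}/A_{g,1,4})$ straight from the definitions (together with base change). Set $P\defeq A_{g,1,4}\times_{\mathcal{A}_{g,\ZZ[1/2]}}B_g$; since $\pi_g$ is representable by smooth affine schemes (Remark~\ref{relrepr}), $P$ is a scheme, smooth affine over $A_{g,1,4}$, and the projection $\mathrm{pr}\colon P\to B_g$ is an étale surjection of schemes (a base change of the atlas). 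The two composites $P\to\mathcal{A}_{g,\ZZ[1/2]}$ being $2$-isomorphic, they classify the same principally polarized abelian scheme, so there is a canonical isomorphism $X^{(4)}\times_{A_{g,1,4}}P\cong X_g\times_{B_g}P$; chaining it with the base-change isomorphisms for $H^1_{\dR}$ (resp. for $F^1$) yields a canonical isomorphism $\mathrm{pr}^*u^*\mathcal{H}_g\cong\mathrm{pr}^*H^1_{\dR}(X_g/B_g)$ (resp. $\mathrm{pr}^*u^*\mathcal{F}_g\cong\mathrm{pr}^*F^1(X_g/B_g)$). Running the same recipe over $P\times_{B_g}P$ shows these isomorphisms satisfy the cocycle condition, so descent along $\mathrm{pr}$ produces $u^*\mathcal{H}_g\cong H^1_{\dR}(X_g/B_g)$ and $u^*\mathcal{F}_g\cong F^1(X_g/B_g)$; compatibility with the subbundle inclusions is inherited from $\mathcal{F}_g\subset\mathcal{H}_g$, which restricts to $F^1\subset H^1_{\dR}$ over the atlas.

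I do not expect any real difficulty here: the content lies entirely in the bookkeeping. The single point to get right is that pulling $\mathcal{H}_g$ back along the \emph{non-}étale morphism $u$ is computed étale-locally by the expected formula, and that the canonical base-change isomorphisms for $H^1_{\dR}$, $F^1$, and (\ref{hodgefiltr}) are natural enough that the identifications over $P$ agree over $P\times_{B_g}P$, i.e. form a genuine descent datum rather than merely a fibrewise isomorphism. This is precisely the transitivity of base change already built into the formation of these sheaves, so I would simply record it and conclude.
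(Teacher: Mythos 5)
Your proof is correct and amounts to the justification the paper leaves implicit: the remark is stated without proof, its content being exactly that representability of $\mathcal{B}_{g,\ZZ[1/2]}$ furnishes the universal triple $(X_g,\lambda_g,b_g)$ and that the base-change compatibility of $H^1_{\dR}$, $F^1$ and the sequence (\ref{hodgefiltr}) identifies the pullback of $\mathcal{H}_g$ and $\mathcal{F}_g$ étale-locally over an atlas, hence, by the descent bookkeeping you spell out, over $B_g$ itself. No gap; your write-up just makes explicit what the paper treats as immediate.
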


In this section we describe the tangent bundle $T_{\mathcal{B}_g/\ZZ}$ in terms of $\mathcal{H}_g$ and $\mathcal{F}_g$; see Theorem \ref{theorem1} for a precise statement. This will be obtained by realizing $\mathcal{B}_g$ as a substack of the stack over $\mathcal{A}_g$ associated to the vector bundle $\mathcal{H}_{g}^{\oplus g}$.

Further, Theorem \ref{theorem1} will allow us to construct a certain family of $g(g+1)/2$ global sections $v_{ij}$ of $T_{\mathcal{B}_g/\ZZ}$ that we call \emph{higher Ramanujan vector fields}.



\subsection{The Gauss-Manin connection and the Kodaira-Spencer morphism on $\mathcal{A}_{g}/\ZZ$} \label{kodairaspencer}

\subsubsection{}

In order to give a precise statement of Theorem \ref{theorem1}, we need to recall some basic facts concerning Gauss-Manin connections and Kodaira-Spencer morphisms over abelian schemes.

 Fix a base scheme $S$ and let $p:X \to U$ be a projective abelian scheme, with $U$ a \emph{smooth} $S$-scheme. Then there is defined an integrable $S$-connection over the de Rham cohomology sheaves (\cite{KO68}; see also \cite{katz70}), the \emph{Gauss-Manin connection}
\begin{align} \label{GMcon}
\nabla : H^i_{\dR}(X/U) \longrightarrow  H^i_{\dR}(X/U)\tensor_{\mathcal{O}_U} \Omega^1_{U/S}\text{,}
\end{align}
whose formation is compatible with every base change $U'\to U$, where $U'$ is a smooth $S$-scheme. 
 
The Gauss-Manin connection on $H^1_{\dR}(X/U)$ induces a morphism
\begin{align*}
T_{U/S} &\longrightarrow \mathcal{H}om_{\mathcal{O}_S}(H_{\dR}^1(X/U),H^1_{\dR}(X/U))\\
    \theta &\longmapsto \nabla_{\theta}( \ \ )\text{.}
\end{align*}
Restricting to $F^1(X/U)$ and passing to the quotient (cf. exact sequence (\ref{hodgefiltr})), we obtain an $\mathcal{O}_{U}$-morphism 
\begin{align*}
T_{U/S} \longrightarrow &\mathcal{H}om_{\mathcal{O}_U}(F^1(X/U),R^1p_*\mathcal{O}_X)\cong F^1(X/U)^{\vee}\tensor_{\mathcal{O}_U}R^1p_*\mathcal{O}_X\text{.}
\end{align*}
Applying the inverse of the canonical isomorphism $\phi_{X^t/U}^1: F^1(X^t/U)^{\vee} \stackrel{\sim}{\to} R^1p_*\mathcal{O}_{X}$ (cf. proof of Lemma \ref{f1lagrangian}, where we identified $X$ with $X^{tt}$ via the canonical biduality isomorphism), we obtain an $\mathcal{O}_U$-morphism
\begin{align*}
\delta : T_{U/S} \longrightarrow F^1(X/U)^{\vee}\tensor_{\mathcal{O}_U}F^1(X^t/U)^{\vee}\text{.}
\end{align*}
This is, possibly up to a sign, the dual of $\rho$ defined in \cite{FC90} III.9.\footnote{With notations as in the proof of Lemma \ref{f1lagrangian}, there are two natural ways of identifying $R^1p_*\mathcal{O}_X$ with $F^1(X^t/U)^{\vee}$: one by $(\phi_{X/U}^0)^{\vee}$, and another by $\phi_{X^t/U}^1$. These produce the same isomorphisms up to a sign. In \cite{FC90} this choice is not specified.}

\subsubsection{}

 Now, with the same notations and hypothesis as above, let $\lambda :X \to X^t$ be a principal polarization. The Gauss-Manin connection $\nabla$ on $H^1_{\dR}(X/U)$ is compatible with the symplectic form $\langle \ , \ \rangle_{\lambda}$ in the following sense. For every sections $\theta$ of $T_{U/S}$, and $\alpha$ and $\beta$ of $H^1_{\dR}(X/U)$, we have
\begin{align} \label{C}
\theta \langle \alpha, \beta\rangle_{\lambda} = \langle \nabla_{\theta}\alpha,\beta \rangle_{\lambda} + \langle \alpha, \nabla_{\theta}\beta \rangle_{\lambda}\text{.}
\end{align}
This can be deduced from the fact that the first Chern class in $H^2_{\dR}(X\times_UX^t/U)$ of the Poincaré line bundle $\mathcal{P}_{X/U}$ is horizontal for the Gauss-Manin connection, since it actually comes from a class in $H^2_{\dR}(X\times_UX^t/S)$.

 By composing $\delta$ with $((\lambda^*)^{\vee})^{-1} : F^1(X^t/U)^{\vee} \stackrel{\sim}{\to}F^1(X/U)^{\vee}$, we obtain a morphism 
\begin{align} \label{KSmor}
\kappa : T_{U/S} \longrightarrow F^1(X/U)^{\vee}\tensor_{\mathcal{O}_U}F^1(X/U)^{\vee}\text{.}
\end{align}
This is the \emph{Kodaira-Spencer morphism} associated to $(X,\lambda)_{/U}$ over $S$. It follows from the compatibility (\ref{C}) that $\kappa$ factors through the submodule of \emph{symmetric tensors} in $F^1(X/U)^{\vee}\tensor_{\mathcal{O}_U}F^1(X/U)^{\vee}$, denoted  $\Gamma^2(F^1(X/U)^{\vee})$.

\begin{obs} \label{explicite}
As $\phi_{X^t/U}^{\vee} = -\phi_{X/U}$ under the canonical biduality isomorphism $X\cong X^{tt}$ (cf. \cite{BBM82} Lemme 5.1.5), one may verify that the composition
\begin{align*}
R^1p_*\mathcal{O}_X \stackrel{(\phi^1_{X^t/U})^{-1}}{\to} F^1(X^t/U)^{\vee} \stackrel{((\lambda^*)^{\vee})^{-1}}{\to} F^1(X/U)^{\vee}
\end{align*}
considered above is given by the isomorphism of vector bundles $H^1_{\dR}(X/U)/F^1(X/U) \stackrel{\sim}{\to} F^1(X/U)^{\vee}$ induced by (cf. Lemma \ref{exactseq})
\begin{align*}
H^1_{\dR}(X/U) &\to H^1_{\dR}(X/U)^{\vee}\\
         \alpha & \mapsto \langle \  \ , \alpha\rangle_{\lambda}\text{.}
\end{align*}
Thus, if $b = (\omega_1,\ldots,\omega_g,\eta_1,\ldots,\eta_g)$ is a symplectic-Hodge basis of $(X,\lambda)_{/U}$,  $\kappa$ admits the following explicit description in terms of $b$:
\begin{align*}
  \kappa (\theta) = \sum_{i=1}^g \langle \ \ , \eta_i \rangle_{\lambda}\tensor\langle \ \ ,\nabla_{\theta}\omega_i\rangle_{\lambda} \text{.}
\end{align*}
\end{obs}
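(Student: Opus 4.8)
The plan is to reduce the remark to its first assertion, namely that the composite $((\lambda^*)^\vee)^{-1}\circ(\phi^1_{X^t/U})^{-1}\colon R^1p_*\mathcal{O}_X \to F^1(X/U)^\vee$ coincides with the isomorphism $\overline\psi\colon H^1_{\dR}(X/U)/F^1(X/U) \to F^1(X/U)^\vee$ deduced, via Lemma \ref{exactseq}, from the map $\psi\colon\alpha \mapsto \langle \ , \alpha\rangle_\lambda$. Once this is known, the explicit formula for $\kappa$ follows by a single substitution, so the substance lies in that identification; I would prove it by a diagram chase resting on the two compatibility statements of \cite{BBM82} (Lemmes 5.1.4 and 5.1.5) already used above.

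First I would unwind the construction of $\langle \ , \ \rangle_\lambda$ from $Q_\lambda$. Set $T \defeq \lambda^*\circ\phi_{X/U}\colon H^1_{\dR}(X/U)^\vee \to H^1_{\dR}(X/U)$. The formula $Q_\lambda(\gamma,\delta) = \delta\circ\lambda^*\circ\phi_{X/U}(\gamma)$ says precisely that, under the canonical identification $H^1_{\dR}(X/U)^{\vee\vee} \cong H^1_{\dR}(X/U)$, one has $Q_\lambda(\gamma, \ ) = T\gamma$; feeding this into the defining identity $\langle Q_\lambda(\gamma, \ ), Q_\lambda(\delta, \ )\rangle_\lambda = Q_\lambda(\gamma,\delta)$ and writing $\alpha = T\gamma$, $\beta = T\delta$ yields $\langle \alpha, \beta\rangle_\lambda = (T^{-1}\beta)(\alpha)$, that is, $\psi = T^{-1}$.

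Next comes the chase. Since $\lambda$ is an isomorphism of abelian schemes, $(\lambda^*)^{-1} = (\lambda^{-1})^*$ respects the Hodge filtrations; and by \cite{BBM82} Lemme 5.1.4, applied with the roles of $X$ and $X^t$ exchanged (using $X^{tt} = X$), the isomorphism $\phi_{X^t/U}$ carries $(R^1p^t_*\mathcal{O}_{X^t})^\vee$ into $F^1(X/U)$ and induces $\phi^1_{X^t/U}$ on the quotients. Tracing $\alpha \in H^1_{\dR}(X/U)$, with image $\overline\alpha$ in $R^1p_*\mathcal{O}_X$, through the composite gives, for $\xi \in F^1(X/U)$, the value $\phi_{X^t/U}^{-1}(\alpha)((\lambda^*)^{-1}\xi)$; now the relation $\phi_{X^t/U}^\vee = -\phi_{X/U}$ (\cite{BBM82} Lemme 5.1.5) together with the previous step rewrites this as $-(T^{-1}\xi)(\alpha) = -\langle \alpha, \xi\rangle_\lambda = \langle \xi, \alpha\rangle_\lambda = \overline\psi(\overline\alpha)(\xi)$, which proves the first assertion. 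The formula for $\kappa$ is then immediate: by construction $\kappa(\theta)$ is the image of $\sum_{i=1}^g \omega_i^\vee \otimes \pi(\nabla_\theta\omega_i) \in F^1(X/U)^\vee \otimes R^1p_*\mathcal{O}_X$ under $\id \otimes \overline\psi$, where $\pi\colon H^1_{\dR}(X/U) \to R^1p_*\mathcal{O}_X$ is the projection and $(\omega_i^\vee)$ the basis of $F^1(X/U)^\vee$ dual to $(\omega_i)$; since $\overline\psi(\pi(\nabla_\theta\omega_i)) = \langle \ , \nabla_\theta\omega_i\rangle_\lambda$ as an element of $F^1(X/U)^\vee$, and since the symplectic-Hodge relations $\langle \omega_j, \eta_i\rangle_\lambda = \delta_{ij}$ say exactly that $\omega_i^\vee = \langle \ , \eta_i\rangle_\lambda\big|_{F^1(X/U)}$, we obtain the asserted expression.

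The only genuine difficulty is the tracking of signs: the shift $\Omega^\bullet_{X/U}[1]$, the biduality isomorphism $X \cong X^{tt}$, the repeated dualizations and transpositions, and the alternating identity $\langle \alpha, \beta\rangle_\lambda = -\langle \beta, \alpha\rangle_\lambda$ each contribute potential signs, and one must verify against the precise normalizations of \cite{BBM82} that they all cancel --- as they must, since the final formula carries none. Everything else in the argument is formal.
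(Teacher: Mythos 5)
Your proposal is correct, and it is essentially the verification the paper leaves to the reader (``one may verify''): the same ingredients are used, namely the compatibility diagram of \cite{BBM82} Lemme 5.1.4, the sign relation $\phi_{X^t/U}^{\vee}=-\phi_{X/U}$ of Lemme 5.1.5, and the defining identity for $\langle\ ,\ \rangle_{\lambda}$ via $Q_{\lambda}$, which correctly yields $\psi=T^{-1}$ and hence the identification of the composition with $\alpha\mapsto\langle\ \ ,\alpha\rangle_{\lambda}|_{F^1}$. The deduction of the formula for $\kappa$ from the symplectic-Hodge relations $\langle\omega_j,\eta_i\rangle_{\lambda}=\delta_{ij}$ is likewise exactly as intended.
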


 Finally, we remark that the Kodaira-Spencer morphism is natural in the following sense. Let $U'$ be a smooth scheme over $S$ and let $F_{/f}:(X',\lambda')_{/U'} \to (X,\lambda)_{/U'}$ be a morphism in $\mathcal{A}_{g,S}$. Denote by $\kappa$ (resp. $\kappa'$) the Kodaira-Spencer morphism associated to $(X,\lambda)_{/U}$ (resp.  $(X',\lambda')_{/U'}$) over $S$. Then the diagram
$$
\raisebox{-0.5\height}{\includegraphics{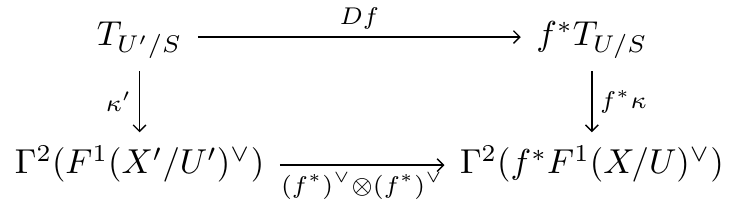}}
$$
commutes. 

\subsubsection{} \label{ksiso}

Let $S$ be a scheme, and denote by $\mathcal{H}_{g,S}$ (resp. $\mathcal{F}_{g,S}$) the vector bundle over $\mathcal{A}_{g,S}$ obtained from $\mathcal{H}_g$ (resp. $\mathcal{F}_g$) by the base change $\mathcal{A}_{g,S}\to \mathcal{A}_g$. As $\mathcal{A}_{g,S}\to S$ is smooth, the naturality of the Gauss-Manin connection permits us to construct a ``universal'' Gauss-Manin connection
\begin{align*}
\nabla : \mathcal{H}_{g,S} \to \mathcal{H}_{g,S}\tensor_{\mathcal{O}_{\mathcal{A}_{g,S,\et}}}\Omega^1_{\mathcal{A}_{g,S}/S}
\end{align*}
and the naturality of the Kodaira-Spencer morphism permits us to construct a ``universal'' Kodaira-Spencer morphism
\begin{align*}
\kappa : T_{\mathcal{A}_{g,S}} \longrightarrow \Gamma^2(\mathcal{F}_{g,S}^{\vee})\text{.}
\end{align*}
These are morphism of sheaves on the étale site of $\mathcal{A}_{g,S}$ given, for any étale scheme $(U,u)$ over $\mathcal{A}_{g,S}$ corresponding to the principally polarized abelian scheme $(X,\lambda)$ over the $S$-scheme $U$, respectively by the Gauss-Manin connection (\ref{GMcon}) and the Kodaira-Spencer morphism of $(X,\lambda)_{/U}$ over $S$ (\ref{KSmor}); note that as $u :U \to \mathcal{A}_{g,S}$ is étale, then $U$ is smooth over $S$.

We remark that the universal Kodaira-Spencer morphism $\kappa : T_{\mathcal{A}_{g,S}} \to \Gamma^2(\mathcal{F}_{g,S}^{\vee})$ is actually an \emph{isomorphism} of $\mathcal{O}_{\mathcal{A}_{g,S,\et}}$-modules (cf. \cite{FC90} Theorem 5.7.(3)).

Finally, let $\mathcal{U}$ be a smooth Deligne-Mumford stack over $S$ and $u : \mathcal{U} \to \mathcal{A}_{g,S}$ be a quasi-compact and quasi-separated morphism of $S$-stacks representable by schemes. Then, the Gauss-Manin connection over $(\mathcal{U},u)$, or simply over $\mathcal{U}$ if $u$ is implicit, 
\begin{align*}
\nabla : u^*\mathcal{H}_{g,S} \to u^*\mathcal{H}_{g,S}\tensor_{\mathcal{O}_{\mathcal{U},\et}} \Omega^1_{\mathcal{U}/S}
\end{align*} 
is defined by pulling back the universal Gauss-Manin connection on $\mathcal{A}_{g,S}$. Further, we may define a Kodaira-Spencer morphism over $(\mathcal{U},u)$ as the composition
\begin{align*}
\kappa_{u} : T_{\mathcal{U}/S} \stackrel{D u }{\longrightarrow} u^*T_{\mathcal{A}_{g,S}/S} \stackrel{u^*\kappa}{\longrightarrow} \Gamma^2(u^*\mathcal{F}_{g,S}^{\vee})\text{.}
\end{align*}
 
\subsection{The embedding $i_g: \mathcal{B}_g \to \mathcal{V}_g$}

We shall employ the following notations in the statement of Theorem \ref{theorem1}. Consider the morphism of coherent $\mathcal{O}_{\mathcal{B}_{g,\et}}$-modules
\begin{align*}
m_g:\pi_g^*\mathcal{H}_g^{\oplus g} \longrightarrow M_{g\times g}(\mathcal{O}_{\mathcal{B}_{g,\et}})
\end{align*}
defined as follows. Let $(U,u)$ be an étale scheme over $\mathcal{B}_g$, and let $(X,\lambda,b)_{/U}$, $b=(\omega_1,\ldots,\omega_g,\eta_1,\ldots,\eta_g)$, be the corresponding object of the fiber category $\mathcal{B}_g(U)$. Then the morphism $m_g$ sends a section $(\alpha_1,\ldots,\alpha_g)$ of $H^1_{\dR}(X/U)^{\oplus g}$ to the section
\begin{align} \label{matrice}
(\langle \alpha_i,\eta_j\rangle_{\lambda})_{1\le i,j\le g} 
\end{align}
of $M_{g\times g}(\mathcal{O}_U)$.  We can thus define a subbundle $\mathcal{S}_g$ of $\pi_g^*\mathcal{H}_g^{\oplus g}$ as the inverse image of the subbundle of symmetric matrices $\Sym_g(\mathcal{O}_{\mathcal{B}_{g,\et}})$ by this morphism. In other words, if $(U,u)$ and $(X,\lambda,b)_{/U}$ are as above, a section $(\alpha_1,\ldots,\alpha_g)$ of $H^1_{\dR}(X/U)^{\oplus g}$ is in $\mathcal{S}_g$ if and only if the matrix (\ref{matrice}) is symmetric. 

\begin{obs} \label{remarkrank}
Note that $m_g$ is surjective: with the above notations, for a given matrix $(a_{ij})_{1\le i,j\le g}$ in $M_{g\times g}(\mathcal{O}_U)$, take $\alpha_i = \sum_{j=1}^ga_{ij}\omega_j$. In particular, $\mathcal{S}_g$ is a subbundle of $\pi_g^*\mathcal{H}_g^{\oplus g}$ of rank $g^2 + g(g+1)/2 = g(3g+1)/2$.
\end{obs}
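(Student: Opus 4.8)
The plan is to reduce everything to an explicit computation on the objects of $\Et(\mathcal{B}_g)$. Surjectivity of a morphism of quasi-coherent sheaves on $\mathcal{B}_g$ can be tested on each étale scheme over $\mathcal{B}_g$, so I would fix an étale scheme $(U,u)$ over $\mathcal{B}_g$ with associated universal object $(X,\lambda,b)_{/U}$, $b=(\omega_1,\ldots,\omega_g,\eta_1,\ldots,\eta_g)$, and, given a section $(a_{ij})_{1\le i,j\le g}$ of $M_{g\times g}(\mathcal{O}_U)$, exhibit the preimage $\alpha_i\defeq\sum_{k=1}^g a_{ik}\omega_k$, $1\le i\le g$. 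The symplectic part of the symplectic-Hodge basis condition gives $\langle\omega_k,\eta_j\rangle_\lambda=\delta_{kj}$, whence $\langle\alpha_i,\eta_j\rangle_\lambda=a_{ij}$; this shows $m_g$ is surjective. (I am just spelling out the computation indicated in the statement.)

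Next I would use surjectivity of $m_g$ to conclude that $\mathcal{S}_g=m_g^{-1}(\Sym_g(\mathcal{O}_{\mathcal{B}_{g,\et}}))$ is a subbundle of $\pi_g^*\mathcal{H}_g^{\oplus g}$ of the asserted rank. The point is that $\Sym_g(\mathcal{O}_{\mathcal{B}_{g,\et}})$ is a subbundle of $M_{g\times g}(\mathcal{O}_{\mathcal{B}_{g,\et}})$ with locally free quotient of rank $g(g-1)/2$: over any ring $R$, every $A\in M_{g\times g}(R)$ decomposes uniquely as $A=S+N$ with $S$ symmetric and $N$ strictly upper triangular, taking $n_{ij}=a_{ij}-a_{ji}$ for $i<j$ (and $n_{ij}=0$ otherwise) and $S=A-N$. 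Hence $\mathcal{S}_g$ is the kernel of the composite surjection of vector bundles $\pi_g^*\mathcal{H}_g^{\oplus g}\stackrel{m_g}{\longrightarrow}M_{g\times g}(\mathcal{O}_{\mathcal{B}_{g,\et}})\to M_{g\times g}(\mathcal{O}_{\mathcal{B}_{g,\et}})/\Sym_g(\mathcal{O}_{\mathcal{B}_{g,\et}})$, and the kernel of a surjective morphism of vector bundles is a subbundle, of rank $2g^2-g(g-1)/2=g(3g+1)/2$; equivalently, from the short exact sequence $0\to\ker m_g\to\mathcal{S}_g\to\Sym_g(\mathcal{O}_{\mathcal{B}_{g,\et}})\to 0$ one reads off $\rank\mathcal{S}_g=g^2+g(g+1)/2$.

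I expect essentially no obstacle. The only place calling for a moment of care is the integral splitting $M_{g\times g}=\Sym_g\oplus(\text{strictly upper triangular matrices})$ used above, since the usual symmetrizer $A\mapsto\tfrac{1}{2}(A+A\transp)$ does not make sense over $\ZZ$ (in particular in characteristic $2$); the complement description avoids this. Everything else is formal: testing surjectivity on an étale cover, and the standard fact that the inverse image of a subbundle under a surjection of vector bundles is again a subbundle.
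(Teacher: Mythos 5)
Your argument is correct and is essentially the paper's own: the remark's one-line proof is exactly your explicit preimage $\alpha_i=\sum_j a_{ij}\omega_j$ (using $\langle\omega_k,\eta_j\rangle_\lambda=\delta_{kj}$), after which $\mathcal{S}_g=m_g^{-1}(\Sym_g)$ is a subbundle of rank $g^2+g(g+1)/2$ for the standard reasons you spell out. Your extra care in exhibiting an integral complement of $\Sym_g$ in $M_{g\times g}$ (avoiding the symmetrizer $\tfrac12(A+A\transp)$, which indeed fails in characteristic $2$) is a sensible elaboration of a point the paper leaves implicit, not a different route.
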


\begin{theorem} \label{theorem1}
Consider the morphism of quasi-coherent $\mathcal{O}_{\mathcal{B}_{g,\et}}$-modules
\begin{align*}
c_g:T_{\mathcal{B}_g/\ZZ} \longrightarrow \Gamma^2(\pi_g^*\mathcal{F}_g^{\vee}) \oplus \pi_g^*\mathcal{H}_g^{\oplus g}
\end{align*}
defined by
\begin{align*}
c_g(\theta) = (\kappa_{u}(\theta),\nabla_{\theta}\eta_1, \ldots,\nabla_{\theta}\eta_g)
\end{align*}
for every étale scheme $(U,u)$ over $\mathcal{B}_g$ corresponding to the object $(X,\lambda,b)_{/U}$ of $\mathcal{B}_g(U)$, where $b=(\omega_1,\ldots,\omega_g,\eta_1,\ldots,\eta_g)$, and $\theta$ a section of $T_{U/\ZZ}$. Then $c_g$ induces an isomorphism of $T_{\mathcal{B}_g/\ZZ}$ onto the subbundle $\Gamma^2(\pi_g^*\mathcal{F}_g^{\vee})\oplus \mathcal{S}_g$ of $\Gamma^2(\pi_g^*\mathcal{F}_g^{\vee}) \oplus \pi_g^*\mathcal{H}_g^{\oplus g}$.
\end{theorem}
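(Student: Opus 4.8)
The plan is to exploit the fact that $\pi_g\colon\mathcal{B}_g\to\mathcal{A}_g$ is representable by smooth affine schemes (Remark \ref{relrepr}), so that we may work \'etale-locally over $\mathcal{A}_g$ and reduce everything to a computation with schemes and vector bundles. Concretely, I would fix an \'etale scheme $(U,u)$ over $\mathcal{A}_g$ corresponding to $(X,\lambda)_{/U}$ over a smooth $\ZZ$-scheme $U$, and use the $P_g$-torsor structure of Lemma \ref{torsor}: the pullback of $\mathcal{B}_g$ to $U$ is the scheme $B(X,\lambda)$, which sits inside the affine bundle $\mathcal{H}_{g}^{\oplus g}\times_{\mathcal{A}_g}U$ as the locus of symplectic-Hodge bases. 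Differentiating the defining equations of a symplectic-Hodge basis $b=(\omega_1,\dots,\omega_g,\eta_1,\dots,\eta_g)$ (the $\omega_i$ lie in $F^1$, and the symplectic relations $\langle\omega_i,\omega_j\rangle_\lambda=\langle\eta_i,\eta_j\rangle_\lambda=0$, $\langle\omega_i,\eta_j\rangle_\lambda=\delta_{ij}$) along a tangent vector $\theta\in T_{U/\ZZ}$, using the horizontality relation (\ref{C}) for $\nabla$ with respect to $\langle\ ,\ \rangle_\lambda$, gives the linear constraints cutting out the image of $c_g$.

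\textbf{Key steps.} First I would verify that $c_g$ is well-defined, i.e. that the expression $(\kappa_u(\theta),\nabla_\theta\eta_1,\dots,\nabla_\theta\eta_g)$ is independent of the \'etale chart and glues; this is immediate from the naturality of $\nabla$ and $\kappa$ recalled in \ref{ksiso}. Second, I would show $\im c_g\subseteq\Gamma^2(\pi_g^*\mathcal{F}_g^\vee)\oplus\mathcal{S}_g$: the first component automatically lands in $\Gamma^2$ by the discussion after (\ref{KSmor}), and for the second one applies $\theta$ to $\langle\eta_i,\eta_j\rangle_\lambda=0$, giving $\langle\nabla_\theta\eta_i,\eta_j\rangle_\lambda+\langle\eta_i,\nabla_\theta\eta_j\rangle_\lambda=0$, i.e. $\langle\nabla_\theta\eta_i,\eta_j\rangle_\lambda=\langle\nabla_\theta\eta_j,\eta_i\rangle_\lambda$ (using that $\langle\ ,\ \rangle_\lambda$ is alternating, Lemma \ref{alternating}), which is precisely the symmetry condition defining $\mathcal{S}_g$. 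Third, and this is the crux, I would prove $c_g$ is an isomorphism onto that subbundle. Since both sides are vector bundles and the statement is \'etale-local, it suffices to check it on a chart where a symplectic-Hodge basis exists globally, say $V=B(X,\lambda)$ over $U$, and to identify the tangent space. On such a chart $V\to U$ is a $P_{g,U}$-torsor, so $T_{V/\ZZ}$ sits in $0\to\pi_g^*T_{V/U}\to T_{V/\ZZ}\to\pi_g^*T_{U/\ZZ}\to 0$ with $T_{V/U}\cong\Lie P_g$, which has rank $g(3g+1)/2$. The vertical directions $T_{V/U}$ correspond exactly to infinitesimal changes of basis by $P_g$: differentiating $b\cdot p$ at $p=\mathbf{1}$ one sees a vertical vector acts by $\nabla_\theta\omega_i=0$ for all $i$ (the $\omega$-part is rigid under $\pi_g$, since $\theta$ is vertical it kills the image in $\mathcal{A}_g$... more precisely $\nabla$ is the pullback connection so $\nabla_\theta$ applied to sections pulled back from $U$ vanishes, but the $\eta_i$ are genuinely moving), $\nabla_\theta\eta_j$ ranging over the $\mathfrak{p}_g$-orbit, and $\kappa_u(\theta)=0$ since $\kappa_u=u^*\kappa\circ D\pi_g$ factors through $D\pi_g$ which annihilates vertical vectors; one checks this identifies $T_{V/U}$ with $\{0\}\oplus(\mathcal{S}_g\cap(\text{purely-}\eta))$, a subbundle of rank $g(3g+1)/2$ matching Remark \ref{remarkrank}. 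On the other hand $c_g$ is compatible with the two exact sequences (the one above and the one $0\to(\mathcal{S}_g\cap\cdots)\to\Gamma^2(\pi_g^*\mathcal{F}_g^\vee)\oplus\mathcal{S}_g\to\Gamma^2(\mathcal{F}_g^\vee)\to 0$ induced by $\kappa$ on the quotient), and the induced map on quotients is $u^*\kappa$, an isomorphism by \ref{ksiso}.

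\textbf{Main obstacle.} The delicate point is the analysis of the vertical part: making precise that, for a vertical tangent vector $\theta\in T_{V/U}$, the data $(\kappa_u(\theta),\nabla_\theta\eta_1,\dots,\nabla_\theta\eta_g)$ recovers $\theta$, i.e. that the $P_g$-action on symplectic-Hodge bases is ``faithfully differentiated'' by recording how the $\eta_i$ move modulo the constraint that the $\omega_i$ stay fixed and the symplectic form is preserved. Equivalently one must show the linear map $\Lie P_{g}\to\{(\beta_1,\dots,\beta_g)\in\mathcal{H}_g^{\oplus g}\mid (\langle\beta_i,\eta_j\rangle_\lambda)_{ij}\text{ symmetric and }\langle\beta_i,\omega_j\rangle_\lambda=0\}$ coming from $p\mapsto \frac{d}{dt}(b\cdot p(t))|_{t=0}$ in the $\eta$-coordinates is an isomorphism; writing $p(t)=\mathbf{1}+t\begin{pmatrix}X&Y\\0&-X\transp\end{pmatrix}$ with $XY\transp$... $Y=Y\transp$-type symmetry, one computes $\eta\mapsto\eta\,(-X\transp)+\omega Y$ modulo the fixed-$\omega$ condition which forces the $X$-part to be absorbed, leaving a bijection with symmetric $Y$ plus... one sees the dimensions match ($g^2+g(g+1)/2$) and injectivity is a short linear-algebra check. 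Once the vertical comparison is in hand, the horizontal one is free from \ref{ksiso}, and the five lemma (or just counting ranks together with the two established inclusions and surjectivity on quotients) finishes the proof. I would also remark at the end that $c_g$ being an isomorphism onto a subbundle automatically shows $T_{\mathcal{B}_g/\ZZ}$ has the expected rank $2g^2+g = g(g+1)/2 + g(3g+1)/2$, reconfirming Theorem \ref{smoothdmstack}, and that the characterization (1)--(2) of the higher Ramanujan vector fields $v_{ij}$ from the introduction follows by inverting $c_g$ on the appropriate standard sections.
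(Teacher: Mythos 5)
Your overall architecture — work étale-locally where a symplectic-Hodge basis exists, use the $P_{g}$-torsor structure of $\pi_g$, compare the relative tangent sequence $0\to T_{V/U}\to T_{V/\ZZ}\to \pi^*T_{U/\ZZ}\to 0$ with $0\to\{0\}\oplus\mathcal{S}_g\to\Gamma^2(\pi_g^*\mathcal{F}_g^{\vee})\oplus\mathcal{S}_g\to\Gamma^2(\pi_g^*\mathcal{F}_g^{\vee})\to 0$, put the Kodaira-Spencer isomorphism on the quotients and the linearized $P_g$-action on the subs, and finish by the five lemma — is viable and genuinely different from the paper's proof, which instead embeds $\mathcal{B}_g$ into $\mathcal{V}_g=\mathbf{V}((\mathcal{H}_g^{\oplus g})^{\vee})$ (Lemma \ref{immersionlemma}), trivializes $T_{\mathcal{V}_g/\ZZ}$ by the Ehresmann splitting attached to the Gauss-Manin connection composed with $\kappa$, restricts along the immersion, and concludes by the symmetry inclusion plus a rank count using the already known dimension $2g^2+g$ of $\mathcal{B}_g$. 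Note, though, that your route still needs the Ehresmann-type identification of \ref{ehresmann}: for vertical $\theta$, the equality of $\nabla_\theta$ applied to the tautological sections with the linearized $P_g$-action is exactly that statement, so the same ingredient enters, just locally.

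The genuine problem is in the step you yourself call the crux, the vertical comparison, which is wrong as written. For $\theta\in T_{V/U}$ corresponding to $(X,Y)\in\Lie P_g$ (so $Y=Y\transp$), one has $\nabla_\theta\omega=\omega X$ and $\nabla_\theta\eta=\omega Y-\eta X\transp$; in particular your claim that vertical vectors satisfy $\nabla_\theta\omega_i=0$ is false (only $\kappa_u(\theta)=0$ holds, for the reason you give, since $\kappa_u$ factors through $D\pi_g$, or because $\langle\ \,,\omega_k\rangle_\lambda$ vanishes on $F^1$). This false claim is what leads you to impose the spurious condition $\langle\beta_i,\omega_j\rangle_\lambda=0$ on the target of the vertical map: with $\beta_i=\nabla_\theta\eta_i=\sum_k\omega_kY_{ki}-\sum_k\eta_kX_{ik}$ one computes $\langle\beta_i,\eta_j\rangle_\lambda=Y_{ji}$ \emph{and} $\langle\beta_i,\omega_j\rangle_\lambda=X_{ij}$, so the $X$-part is not ``absorbed'' — it is recorded in the $\eta$-coefficients of the $\beta_i$ — and the map $(X,Y)\mapsto(\beta_1,\ldots,\beta_g)$ is injective with image exactly the locus where $(\langle\beta_i,\eta_j\rangle_\lambda)_{ij}$ is symmetric, i.e. all of $\{0\}\oplus\mathcal{S}_g$, of rank $g^2+g(g+1)/2$. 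Your version of the target (symmetric pairing with the $\eta_j$ \emph{and} $\langle\beta_i,\omega_j\rangle_\lambda=0$) has rank only $g(g+1)/2$, which contradicts the dimension match you assert and would make the ranks fail to sum to $2g^2+g$, so the five-lemma conclusion would not go through as stated. Once the vertical map is correctly identified as an isomorphism onto $\{0\}\oplus\mathcal{S}_g$ (together with the symmetry inclusion you correctly deduce from (\ref{C}) and the isomorphism $\kappa$ of \ref{ksiso} on the quotient), your argument does close and even yields the rank of $T_{\mathcal{B}_g/\ZZ}$ as an output rather than an input.
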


A proof of this result will be given at the end of this paragraph.

\subsubsection{} \label{vg}

Consider the associated space of the vector bundle $\mathcal{H}_g^{\oplus g}$ (cf. \cite{olsson16} 10.2)
\begin{align*}
\mathcal{V}_g \defeq \mathbf{V}((\mathcal{H}_g^{\oplus g})^{\vee})=\underline{\Spec}_{\mathcal{A}_g}(\mathcal{S}ym(\mathcal{H}_g^{\oplus g})^{\vee})\text{.}
\end{align*}
This is a Deligne-Mumford stack over $\Spec \ZZ$ whose objects lying over a scheme $U$ are given by ``$(g+2)$-uples''
\begin{align*}
(X, \lambda, \alpha_1,\ldots,\alpha_g)_{/U}\text{,}
\end{align*}
where $(X,\lambda)_{/U}$ is an object of $\mathcal{A}_{g}(U)$, and $\alpha_i$ is a global section of $H^1_{\dR}(X/U)$ for every $1\le i\le g$. Note that the forgetful functor
\begin{align*}
\Phi_g : \mathcal{V}_g \to \mathcal{A}_{g}
\end{align*}
defines a morphism of stacks representable by smooth affine schemes. 

We define a morphism of stacks
\begin{align*}
i_g : \mathcal{B}_g \to \mathcal{V}_g
\end{align*}
 as follows. Let $(X,\lambda,b)_{/U}$ be an object of $\mathcal{B}_g$ and denote $b=(\omega_1,\ldots,\omega_g,\eta_1,\ldots,\eta_g)$. Then $i_g$ sends $(X,\lambda,b)_{/U}$ to the object
\begin{align*}
(X,\lambda,\eta_1,\ldots,\eta_g)_{/U}
\end{align*}
of $\mathcal{V}_g$. The action of $i_g$ on morphisms is evident. Note that the diagram of morphisms of stacks
$$
\raisebox{-0.5\height}{\includegraphics{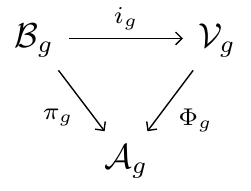}}
$$
is (strictly) commutative.

\begin{lemma} \label{immersionlemma}
The morphism $i_g: \mathcal{B}_g\to \mathcal{V}_g$ is an immersion of stacks.
\end{lemma}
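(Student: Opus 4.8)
The plan is to show that $i_g$ is a monomorphism of stacks which is locally a closed immersion, hence an immersion. Since $\mathcal{V}_g \to \mathcal{A}_g$ and $\mathcal{B}_g \to \mathcal{A}_g$ are both representable by schemes, it suffices to work after base change along an étale surjection $U \to \mathcal{A}_g$, say coming from $(X,\lambda)_{/U}$ with $U$ affine; thus I am reduced to a statement about schemes over $U$. On the one hand, $\mathcal{B}_g \times_{\mathcal{A}_g} U = B(X,\lambda)$ is the $P_{g,U}$-torsor of symplectic-Hodge bases (Corollary \ref{relrepr0} and Remark \ref{relrepr}); on the other hand, $\mathcal{V}_g \times_{\mathcal{A}_g} U$ is the affine space $\mathbf{V}\big((H^1_{\dR}(X/U)^{\oplus g})^{\vee}\big)$ over $U$, i.e. $g$ copies of the total space of $H^1_{\dR}(X/U)$. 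Under these identifications the base-changed morphism sends a symplectic-Hodge basis $b = (\omega_1,\ldots,\omega_g,\eta_1,\ldots,\eta_g)$ to the $g$-tuple $(\eta_1,\ldots,\eta_g)$.

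The key step is then to check that this map of $U$-schemes is a (locally closed) immersion. First I would argue it is a monomorphism: given a $U'$-point, i.e. a symplectic-Hodge basis $b$ of $(X,\lambda)_{/U'}$, the data of $\eta_1,\ldots,\eta_g$ determines the $\omega_i$'s uniquely. Indeed, the $\eta_j$ together with $F^1$ already determine a splitting, because the symplectic form gives an isomorphism $H^1_{\dR}(X/U')/F^1 \xrightarrow{\sim} (F^1)^{\vee}$ and the condition $\langle \omega_i,\eta_j\rangle_\lambda = \delta_{ij}$, $\langle \omega_i,\omega_j\rangle_\lambda = 0$ pins down each $\omega_i$ as the unique section of $F^1$ with prescribed pairing against the span of the $\eta_j$'s (note that the $\eta_j$'s, being part of a symplectic basis, form a Lagrangian complement to $F^1$, so the pairing $F^1 \times \langle\eta_1,\ldots,\eta_g\rangle \to \mathcal{O}$ is perfect). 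This shows $i_g$ is injective on points and on tangent vectors, i.e. a monomorphism, even a formally unramified one. To upgrade ``monomorphism'' to ``immersion'' I would identify the image concretely: the locus in $\mathbf{V}(H^1_{\dR}(X/U)^{\oplus g})^{\vee}$ cut out by the conditions that $(\alpha_1,\ldots,\alpha_g)$ be a Lagrangian frame complementary to $F^1$ — namely $\langle \alpha_i,\alpha_j\rangle_\lambda = 0$ (closed conditions) together with the open condition that $\alpha_1,\ldots,\alpha_g, \omega_1,\ldots,\omega_g$ be a basis, where $\omega_i$ is the section of $F^1$ manufactured above. On this locally closed subscheme the assignment $(\alpha_i) \mapsto (\omega_i,\alpha_i)$ furnishes an explicit inverse to $i_g$, exhibiting $i_g$ as an isomorphism onto a locally closed subscheme, hence an immersion.

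An alternative, and perhaps cleaner, route would be to note that $i_g$ is a morphism of smooth schemes over $U$ which is a monomorphism, that source and target are separated and of finite type, and that a monomorphism which is also unramified and — after one checks it — of finite type is an immersion onto a locally closed subscheme provided one verifies it is quasi-compact or works étale-locally on the target (\cite{EGAIV4} 17.2.6 or the equivalent statement in the Stacks Project); combined with the explicit retraction this avoids any delicate gluing. In either approach the main obstacle is bookkeeping rather than conceptual: one must carefully express the reconstruction $b \mapsto (\eta_j) \mapsto b$ in families and verify it is a morphism of schemes (not just a set-map on points), which comes down to the fact that ``the unique section of a subbundle with prescribed values under a perfect pairing'' is given by an honest morphism — true because it amounts to inverting a matrix of regular functions on the open locus where the relevant pairing is perfect. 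Once that is in place, identifying the image as the intersection of the open locus with the closed subscheme $\{\langle \alpha_i,\alpha_j\rangle_\lambda = 0\}$ finishes the proof.
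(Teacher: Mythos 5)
Your main argument is correct and is essentially the paper's proof: the paper simply takes an arbitrary scheme $U \to \mathcal{V}_g$, corresponding to $(X,\lambda,\alpha_1,\ldots,\alpha_g)_{/U}$, and identifies $\mathcal{B}_g\times_{\mathcal{V}_g}U$ with the locally closed subscheme of $U$ cut out by the closed conditions $\langle\alpha_i,\alpha_j\rangle_{\lambda}=0$ and the open condition $\overline{\alpha}_1\wedge\cdots\wedge\overline{\alpha}_g\neq 0$ in $H^1_{\dR}(X/U)/F^1(X/U)$, with the inverse furnished by the unique symplectic completion of Proposition \ref{exisunic}(2) --- exactly your description of the image, organized so that no étale localization on $\mathcal{A}_g$ and no descent of the property ``immersion'' is needed. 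One warning about your aside: the principle ``quasi-compact unramified finite-type monomorphism implies immersion'' is false in general (consider $(\mathbf{A}^1\setminus\{0\})\sqcup\{0\}\to\mathbf{A}^1$), so the explicit locally closed description --- the load-bearing part of your argument --- should not be replaced by that alternative route.
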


\begin{proof}
Let $U$ be a scheme and $U \to \mathcal{V}_g$ be a morphism corresponding to the object $(X,\lambda, \alpha_1,\ldots,\alpha_g)_{/U}$ of $\mathcal{V}_g(U)$. Then the fiber product $\mathcal{B}_g \times_{\mathcal{V}_g} U$ can be naturally identified with the locally closed subscheme of $U$ defined by the equations
\begin{align*}
\overline{\alpha}_1\wedge \cdots \wedge \overline{\alpha}_g &\neq 0\\
\langle \alpha_i,\alpha_j \rangle_{\lambda} &= 0 \text{, }\ \ \forall i,j
\end{align*}
where $\overline{\alpha}_i$ denotes the image of $\alpha_i$ in $H^1_{\dR}(X/U)/F^1(X/U)$ (cf. Proposition \ref{exisunic} (2)).
\end{proof}

\subsubsection{} \label{ehresmann}

The proof Theorem \ref{theorem1} relies on Ehresmann's point of view on connections on vector bundles. Let us briefly recall how this goes in our context.

Let $S$ be a scheme, $X$ be a smooth $S$-scheme and $\mathcal{E}$ be a vector bundle over $X$. We denote by $E = \mathbf{V}(\mathcal{E}^{\vee})$ the associated space and by $p:E \to X$ the projection morphism. As $p$ is smooth, we have the exact sequence of vector bundles over $E$
\begin{align*}
0 \longrightarrow T_{E/X} \longrightarrow T_{E/S} \stackrel{Dp}{\longrightarrow}  p^*T_{X/S} \to 0\text{.}
\end{align*}
We claim that every $S$-connection $\nabla :\mathcal{E} \to \mathcal{E}\tensor_{\mathcal{O}_X} \Omega^1_{X/S}$ induces a canonical splitting of the above exact sequence. In fact, this can be obtained by means of the projection
\begin{align*}
P_{\nabla}: T_{E/S} \longrightarrow T_{E/X}
\end{align*} 
defined as follows. The vector bundle $T_{E/X}$ is canonically isomorphic to $p^*\mathcal{E}$ (\cite{EGAIV4} Corollaire 16.4.9); it is thus endowed with a universal global section, say $s$. We put $P_{\nabla}(\theta)= (p^*\nabla)_{\theta}s$.

It is not difficult to transpose the above considerations to the case of smooth Deligne-Mumford stacks (cf. \ref{tangentstacks}).

\subsubsection{} \emph{Proof of Theorem \ref{theorem1}}. Let $\mathcal{V}_g$ and $\Phi_g: \mathcal{V}_g \to \mathcal{A}_g$ be as in \ref{vg}. According to the discussion in \ref{ehresmann},  the connection on $\mathcal{H}_g^{\oplus g}$ given by the direct sum of the ``universal'' Gauss-Manin connection $\nabla : \mathcal{H}_g \to \mathcal{H}_g \tensor_{\mathcal{O}_{\mathcal{A}_{g,\et}}} \Omega^1_{\mathcal{A}_g/\ZZ}$ at each factor induces a splitting  of the exact sequence
\begin{align*}
0 \longrightarrow T_{\mathcal{V}_g/\mathcal{A}_{g}} \longrightarrow T_{\mathcal{V}_g/\ZZ} \stackrel{D\Phi_g}{\longrightarrow} \Phi_g^*T_{\mathcal{A}_{g}/\ZZ} \longrightarrow 0\text{.}
\end{align*}
Thus, after identifying $T_{\mathcal{V}_g/\mathcal{A}_{g}}$ with $\Phi_g^*\mathcal{H}_g^{\oplus g}$, we obtain an isomorphism
\begin{align*}
\overline{c}_g' : T_{\mathcal{V}_{g}} \stackrel{\sim}{\longrightarrow} \Phi_g^*T_{\mathcal{A}_{g}/\ZZ} \oplus \Phi_g^*\mathcal{H}_g^{\oplus g}
\end{align*}
given explicitly by
\begin{align*}
\overline{c}_g'(\theta) = (D\Phi_g(\theta),\nabla_{\theta}\alpha_1,\ldots,\nabla_{\theta}\alpha_g)
\end{align*}
for every étale scheme  $(U,u)$ over $\mathcal{V}_g$ corresponding to the object $(X,\lambda,\alpha_1,\ldots,\alpha_g)_{/U}$ of $\mathcal{V}_g(U)$, and every section $\theta$ of $T_{U/\ZZ}$.

 By composing $\overline{c}_g'$ with the Kodaira-Spencer isomorphism $\kappa : T_{\mathcal{A}_g/\ZZ} \stackrel{\sim}{\to} \Gamma^2(\mathcal{F}_g^{\vee})$ (see \ref{ksiso}), we obtain an isomorphism
\begin{align*}
\overline{c}_g : T_{\mathcal{V}_g/\ZZ} \stackrel{\sim}{\longrightarrow} \Gamma^2(\Phi_g^*\mathcal{F}_g^{\vee}) \oplus \Phi_g^*\mathcal{H}_g^{\oplus g}
\end{align*}
given explicitly by
\begin{align*}
\overline{c}_g(\theta) = (\kappa_{u}(\theta),\nabla_{\theta}\alpha_1,\ldots,\nabla_{\theta}\alpha_g) 
\end{align*}
with notations as above. Finally, note that the morphism $c_g$ in the statement is defined by restricting $\overline{c}_g$ to $\mathcal{B}_g$ via the immersion $i_g : \mathcal{B}_g \to \mathcal{V}_g$ (cf. Lemma \ref{immersionlemma}). In particular, as $\mathcal{B}_g$ is a smooth substack of $\mathcal{V}_g$ via $i_g$, then $c_g$ induces an isomorphism of $T_{\mathcal{B}_g/\ZZ}$ onto a subbundle, say $\mathcal{E}_g$, of $\Gamma^2(\pi_g^*\mathcal{F}_g^{\vee}) \oplus \pi_g^*\mathcal{H}_g^{\oplus g}$. To finish the proof, it is sufficient to show that $\mathcal{E}_g = \Gamma^2(\pi_g^*\mathcal{F}_g^{\vee})\oplus \mathcal{S}_g$. 

Note that the compatibility (\ref{C}) between Gauss-Manin connections and principal polarizations implies that $c_g$ factors by the subbundle $\Gamma^2(\pi_g^*\mathcal{F}_g^{\vee})\oplus \mathcal{S}_g$. Indeed, let $(U,u)$ be an étale scheme over $\mathcal{B}_g$ corresponding to the object $(X,\lambda,b)_{/U}$ of $\mathcal{B}_g(U)$, with $b=(\omega_1,\ldots,\omega_g,\eta_1,\ldots,\eta_g)$, and let $\theta$ be a section of $T_{U/\ZZ}$. Then, as $\langle \eta_i,\eta_j\rangle_{\lambda}=0$, we obtain
\begin{align*}
0 = \nabla_{\theta}\langle \eta_i,\eta_j\rangle_{\lambda} = \langle \nabla_{\theta}\eta_i,\eta_j \rangle_{\lambda} + \langle \eta_i,\nabla_{\theta}\eta_j\rangle_{\lambda} = \langle \nabla_{\theta}\eta_i,\eta_j \rangle_{\lambda} - \langle \nabla_{\theta}\eta_j,\eta_i \rangle_{\lambda}\text{.}
\end{align*}

This proves that $\mathcal{E}_g$ is a subbundle of $\Gamma^2(\pi_g^*\mathcal{F}_g^{\vee})\oplus \mathcal{S}_g$. To conclude, we simply remark that the ranks of the subbundles $\mathcal{E}_g$ and $\Gamma^2(\pi_g^*\mathcal{F}_g^{\vee})\oplus \mathcal{S}_g$ of $\Gamma^2(\pi_g^*\mathcal{F}_g^{\vee})\oplus \pi_g^*\mathcal{H}_g^{\oplus g}$ coincide (cf. Remark \ref{remarkrank}). \hfill $\blacksquare$

\subsection{The higher Ramanujan vector fields} 

Let
\begin{align*}
\langle \  , \ \rangle : \pi_g^*\mathcal{H}_g \times \pi_g^*\mathcal{H}_g \to \mathcal{O}_{\mathcal{B}_{g,\et}}
\end{align*}
be the symplectic $\mathcal{O}_{\mathcal{B}_{g,\et}}$-bilinear form given, for each étale scheme $(U,u)$ over $\mathcal{B}_g$ corresponding to the object $(X,\lambda,b)_{/U}$ of $\mathcal{B}_g(U)$, by
\begin{align*}
u^*\langle \ , \ \rangle \defeq  \langle \ , \ \rangle_{\lambda} : H^1_{\dR}(X/U) \times H^1_{\dR}(X/U) \to \mathcal{O}_{U}
\end{align*}
This is well-defined by Remark \ref{remarkbasechange}.

We denote by
\begin{align*}
b_g = (\omega_1,\ldots,\omega_g,\eta_1,\ldots,\eta_g)
\end{align*}
the ``universal'' symplectic-Hodge basis over $\mathcal{B}_g$. Namely, $b_g$ is the basis of the vector bundle $\pi_g^*\mathcal{H}_g$ such that for every étale scheme $(U,u)$ over $\mathcal{B}_g$ corresponding to the object $(X,\lambda,b)_{/U}$ of $\mathcal{B}_g(U)$ we have $u^*b_g=b$.

Note that vector bundle $\pi_g^*\mathcal{F}_g^{\vee}$ is trivialized over $\mathcal{B}_g$ by the global sections
\begin{align*}
\langle \ \ , \eta_i \rangle : \pi_g^*\mathcal{F}_g \to \mathcal{O}_{\mathcal{B}_{g,\et}}
\end{align*}
for $1\le i \le g$ (this is the dual basis of $(\omega_1,\ldots,\omega_g)$). Accordingly, $\Gamma^2(\pi_g^*\mathcal{F}_g^{\vee})$ is trivialized by the global sections
\begin{align*}
\varphi_{ij} \defeq \begin{cases}
               \langle \ \ , \eta_i \rangle \tensor \langle \ \ , \eta_i \rangle & i=j\\
               \langle \ \ , \eta_i \rangle\tensor \langle \ \ , \eta_j \rangle + \langle \ \ , \eta_j \rangle\tensor \langle \ \ , \eta_i \rangle & i <j
              \end{cases}
\end{align*}
for $1\le i\le j \le g$.

Let $c_g: T_{\mathcal{B}_g/\ZZ} \stackrel{\sim}{\to} \Gamma^2(\pi_g^*\mathcal{F}_g^{\vee}) \oplus \mathcal{S}_g$ be the isomorphism of $\mathcal{O}_{\mathcal{B}_{g,\et}}$-modules defined in Theorem \ref{theorem1}.

\begin{defi} \label{deframvf}
For every $1\le i \le j \le g$, we define the \emph{higher Ramanujan vector field} $v_{ij}$ as being the unique global section of $T_{\mathcal{B}_g/\ZZ}$ such that $c_g(v_{ij})=(\varphi_{ij},0)$.
\end{defi}

Let us denote the ``universal'' Gauss-Manin connection over $\mathcal{B}_g$ by (cf. \ref{ksiso})
\begin{align*}
\nabla : \pi^*_g\mathcal{H}_g \longrightarrow \pi^*_{g}\mathcal{H}_g \tensor_{\mathcal{O}_{\mathcal{B}_{g,\et}}} \Omega^1_{\mathcal{B}_g/\ZZ}\text{.}
\end{align*}

\begin{prop} \label{caracchamps}
The higher Ramanujan vector fields are the unique global sections $v_{ij}$ of $T_{\mathcal{B}_g/\ZZ}$ such that, for every $1\le i \le j \le g$,
\begin{enumerate}
   \item $\nabla_{v_{ij}}\omega_i = \eta_j$, $\nabla_{v_{ij}}\omega_j=\eta_i$, and $\nabla_{v_{ij}}\omega_k=0$ for $k \not\in\{i,j\}$.
   \item $\nabla_{v_{ij}}\eta_k =0$, for every $1\le k \le g$.
\end{enumerate}
\end{prop}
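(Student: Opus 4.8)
The plan is to read the proposition off Theorem~\ref{theorem1}, unwinding the isomorphism $c_g$ by means of the explicit description of the Kodaira--Spencer morphism in Remark~\ref{explicite} together with the compatibility~(\ref{C}) of the Gauss--Manin connection with the symplectic form. Throughout I fix an étale scheme $(U,u)$ over $\mathcal{B}_g$ corresponding to $(X,\lambda,b)_{/U}$ with $b=(\omega_1,\ldots,\omega_g,\eta_1,\ldots,\eta_g)$, and let $\theta$ be a section of $T_{U/\ZZ}$. The first observation is that condition~(2) is immediate from the shape of $c_g$: since $c_g(\theta)=(\kappa_u(\theta),\nabla_\theta\eta_1,\ldots,\nabla_\theta\eta_g)$, the vanishing of the $\mathcal{S}_g$-component of $c_g(v_{ij})=(\varphi_{ij},0)$ says exactly that $\nabla_{v_{ij}}\eta_k=0$ for all $k$; conversely, a section $\theta$ has $\nabla_\theta\eta_k=0$ for all $k$ precisely when the $\mathcal{S}_g$-component of $c_g(\theta)$ vanishes.

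The key point I would then establish is that, \emph{once} $\nabla_\theta\eta_k=0$ for all $k$, the section $\nabla_\theta\omega_k$ automatically lies in the subbundle spanned by $\eta_1,\ldots,\eta_g$. This is where~(\ref{C}) enters: applying it to $\alpha=\omega_k$ and $\beta=\eta_l$ and using that $\langle\omega_k,\eta_l\rangle_\lambda=\delta_{kl}$ is locally constant yields $\langle\nabla_\theta\omega_k,\eta_l\rangle_\lambda=-\langle\omega_k,\nabla_\theta\eta_l\rangle_\lambda=0$; since these pairings compute the coefficients of the $\omega_l$ in the expansion of $\nabla_\theta\omega_k$ in the symplectic basis $b$, we conclude $\nabla_\theta\omega_k=\sum_l b_{kl}\eta_l$ for suitable local functions $b_{kl}$. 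Feeding this into Remark~\ref{explicite}, and using that the restrictions $\langle\ ,\eta_l\rangle_\lambda$ to $\pi_g^*\mathcal{F}_g$ form the basis dual to $(\omega_1,\ldots,\omega_g)$ (pairing a section of $\pi_g^*\mathcal{F}_g$ against anything only detects the $\eta$-part, by the Lagrangian property of $\pi_g^*\mathcal{F}_g$), one gets $\kappa_u(\theta)=\sum_{k,l}b_{kl}\,\langle\ ,\eta_k\rangle_\lambda\otimes\langle\ ,\eta_l\rangle_\lambda$, with $(b_{kl})$ symmetric since the target is $\Gamma^2(\pi_g^*\mathcal{F}_g^\vee)$.

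With these two computations in hand the proof closes in both directions. For the existence direction, since $\kappa_u(v_{ij})=\varphi_{ij}$, comparing coefficients forces $b_{ij}=b_{ji}=1$ and all other $b_{kl}=0$ when $i<j$, resp.\ $b_{ii}=1$ and the rest zero when $i=j$; combined with the span statement of the previous paragraph this is precisely the system~(1). For the uniqueness direction, if $\theta$ satisfies both~(1) and~(2), then~(1) already exhibits each $\nabla_\theta\omega_k$ as the prescribed combination of the $\eta_l$, so the coordinate formula for $\kappa_u$ gives $\kappa_u(\theta)=\varphi_{ij}$; hence $c_g(\theta)=(\varphi_{ij},0)=c_g(v_{ij})$, and $\theta=v_{ij}$ because $c_g$ is an isomorphism (Theorem~\ref{theorem1}).

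I expect the only delicate points to be the sign and index bookkeeping dictated by the symplectic conventions (for instance $\langle\eta_l,\omega_k\rangle_\lambda=-\delta_{kl}$) and the careful handling of the ``restriction to $\pi_g^*\mathcal{F}_g$'' implicit in Remark~\ref{explicite}; there is no genuine obstacle here, as Theorem~\ref{theorem1} has already done the substantive work of identifying $T_{\mathcal{B}_g/\ZZ}$ with the subbundle $\Gamma^2(\pi_g^*\mathcal{F}_g^\vee)\oplus\mathcal{S}_g$.
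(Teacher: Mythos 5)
Your argument is correct and essentially the same as the paper's: both deduce the proposition from Theorem \ref{theorem1} via the explicit Kodaira--Spencer formula of Remark \ref{explicite}, and obtain uniqueness by showing that conditions (1) and (2) force $c_g(\theta)=(\varphi_{ij},0)$ and invoking the injectivity of $c_g$. The only difference is one of bookkeeping: to see that $\nabla_{v_{ij}}\omega_k$ has no component along $\omega_1,\ldots,\omega_g$ you use the compatibility (\ref{C}) together with $\nabla_{v_{ij}}\eta_l=0$, whereas the paper extracts this by evaluating the identity $\kappa_{u}(v_{ij})=\varphi_{ij}$ against the symplectic basis; your justification is if anything the more transparent one, since that identity a priori lives only in $\Gamma^2(\pi_g^*\mathcal{F}_g^{\vee})$.
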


\begin{proof}
The vector fields $v_{ij}$ satisfy (2) by definition of $c_g$ in Theorem \ref{theorem1}. Moreover, using the explicit expression of the Kodaira-Spencer morphism in Remark \ref{explicite}, we see that 
\begin{align} \label{equiv1}
\sum_{k=1}^g\langle \ \ , \eta_k \rangle \tensor \langle \ \ ,\nabla_{v_{ij}}\omega_k \rangle  =  \begin{cases}
               \langle \ \ , \eta_i\rangle  \tensor \langle \ \ , \eta_i \rangle & i=j\\
               \langle \ \ , \eta_i\rangle  \tensor \langle \ \ , \eta_j \rangle +  \langle \ \ , \eta_j\rangle  \tensor \langle \ \ , \eta_i \rangle & i <j
              \end{cases}
\end{align}
in $\Gamma^2(\pi_g^*\mathcal{F}_g^{\vee})$ for every $1\le i \le j \le g$. As $b_g$ is symplectic with respect to $\langle \ , \ \rangle$, by evaluating the second factors at $\eta_l$ for every $1\le l \le g$ in the above equation, we see that $\nabla_{v_{ij}}\omega_k$ lies in the subbundle of $\pi^*_g\mathcal{H}_g$ generated by $\eta_1,\ldots,\eta_g$, for every $1\le i\le j \le g$ and $1\le k \le g$.

Thus, to prove that the vector fields $v_{ij}$ satisfy (1), it is sufficient to prove that
\begin{align}
 \langle \omega_l, \nabla_{v_{ij}}\omega_i\rangle = \delta_{lj}\text{, }\langle \omega_l, \nabla_{v_{ij}}\omega_j\rangle = \delta_{li}\text{, and } \langle \omega_l, \nabla_{v_{ij}}\omega_k\rangle = 0\text{ for }k\not\in\{i,j\}
\end{align}  
for every $1\le l \le g$. This in turn follows immediately from (\ref{equiv1}) by evaluating the second factors at $\omega_l$.

To prove unicity, let $(w_{ij})_{1\le i \le j \le g}$ be a family of vector fields on $\mathcal{B}_g$ satisfying (1) and (2). Note that, by the explicit expression of the Kodaira-Spencer morphism in Remark \ref{explicite}, equations in (1) imply $\kappa_{\pi_g}(w_{ij}) = \varphi_{ij}$ in the notation preceding Definition \ref{deframvf}. Thus, by (1) and (2),
\begin{align*}
c_g(w_{ij}) = (\varphi_{ij},0) = c_g(v_{ij})
\end{align*}
for every $1\le i \le j \le g$, that is, $w_{ij}= v_{ij}$ (cf. Definition \ref{deframvf}).
\end{proof}

Let $S$ be a scheme. We denote by $\pi_{g,S}: \mathcal{B}_{g,S} \to \mathcal{A}_{g,S}$ the base change of $\pi_{g}:\mathcal{B}_{g} \to \mathcal{A}_g$ by $\mathcal{A}_{g,S} \to \mathcal{A}_g$, and by
\begin{align*}
\nabla : \pi_{g,S}^*\mathcal{H}_{g,S} \to \pi_{g,S}^*\mathcal{H}_{g,S} \tensor_{\mathcal{O}_{\mathcal{B}_{g,S,\et}}}\Omega^1_{\mathcal{B}_{g,S}/S}
\end{align*}
the ``universal'' Gauss-Manin $S$-connection over $\mathcal{B}_{g,S}$.

\begin{obs} \label{remarkrg}
Let $\mathcal{R}_{g,S}$ be the $\mathcal{O}_{\mathcal{B}_{g,S,\et}}$-submodule of $T_{\mathcal{B}_{g,S}/S}$ generated by all the $v_{ij}$; if $S=\Spec \ZZ$, we denote simply $\mathcal{R}_{g,S}\eqdef \mathcal{R}_{g}$. It is clear from Theorem \ref{theorem1} that $\mathcal{R}_{g,S}$ is the kernel of the surjective $\mathcal{O}_{\mathcal{B}_{g,S,\et}}$-morphism
\begin{align*}
T_{\mathcal{B}_{g,S}/S} &\longrightarrow \mathcal{S}_g \\
           \theta &\mapsto (\nabla_{\theta} \eta_1,\ldots,\nabla_{\theta} \eta_g)
\end{align*}
 In particular, $\mathcal{R}_{g,S}$ is a subbundle of $T_{\mathcal{B}_{g,S}/S}$ of rank $g(g+1)/2$.
\end{obs}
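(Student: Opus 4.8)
The plan is to read the claim off directly from Theorem \ref{theorem1}, by recognizing the displayed morphism as the second coordinate of the isomorphism $c_g$. First note that everything in sight is compatible with the base change $\mathcal{B}_{g,S}\to\mathcal{B}_g$: the sheaf $\Omega^1_{\mathcal{B}_{g,S}/S}$ is the pullback of $\Omega^1_{\mathcal{B}_g/\ZZ}$, and likewise the isomorphism $c_g$, the trivialization $(\varphi_{ij})_{1\le i\le j\le g}$ of $\Gamma^2(\pi_g^*\mathcal{F}_g^\vee)$ recalled before Definition \ref{deframvf}, the universal Gauss-Manin connection, and hence the vector fields $v_{ij}$, are all obtained by pullback from their analogues over $\mathcal{B}_g$. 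One may therefore carry out the argument over $\Spec\ZZ$ and deduce the general case by pullback, or equivalently work throughout with the evident $S$-versions of these objects.

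Next I would check that the morphism $q_g\colon\theta\mapsto(\nabla_\theta\eta_1,\ldots,\nabla_\theta\eta_g)$ actually factors through the subbundle $\mathcal{S}_g\subset\pi_g^*\mathcal{H}_g^{\oplus g}$; this is precisely the symmetry identity $0=\nabla_\theta\langle\eta_i,\eta_j\rangle_\lambda=\langle\nabla_\theta\eta_i,\eta_j\rangle_\lambda-\langle\nabla_\theta\eta_j,\eta_i\rangle_\lambda$ already established in the proof of Theorem \ref{theorem1}. By the explicit formula for $c_g$, the map $q_g\colon T_{\mathcal{B}_g/\ZZ}\to\mathcal{S}_g$ is then the composition of $c_g\colon T_{\mathcal{B}_g/\ZZ}\stackrel{\sim}{\to}\Gamma^2(\pi_g^*\mathcal{F}_g^\vee)\oplus\mathcal{S}_g$ with the (split) projection onto the second summand; hence $q_g$ is surjective and $\ker q_g=c_g^{-1}\bigl(\Gamma^2(\pi_g^*\mathcal{F}_g^\vee)\oplus 0\bigr)$. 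Since $(\varphi_{ij})_{1\le i\le j\le g}$ is a basis of $\Gamma^2(\pi_g^*\mathcal{F}_g^\vee)$ and $c_g(v_{ij})=(\varphi_{ij},0)$ by Definition \ref{deframvf}, the family $(v_{ij})_{1\le i\le j\le g}$ is a basis of $\ker q_g$, so $\mathcal{R}_{g,S}=\ker q_g$ by definition of $\mathcal{R}_{g,S}$. Finally, $c_g$ identifies $\mathcal{R}_{g,S}$ with the direct summand $\Gamma^2(\pi_g^*\mathcal{F}_g^\vee)$, which exhibits $\mathcal{R}_{g,S}$ as a subbundle of $T_{\mathcal{B}_{g,S}/S}$ of rank $\rank\Gamma^2(\pi_g^*\mathcal{F}_g^\vee)=g(g+1)/2$ ($\mathcal{F}_g$ being of rank $g$; equivalently $2g^2+g-g(3g+1)/2$).

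There is no genuine obstacle here, since the substance is entirely contained in Theorem \ref{theorem1}. The only two points needing any attention are (i) verifying that $q_g$ lands in $\mathcal{S}_g$ rather than merely in $\pi_g^*\mathcal{H}_g^{\oplus g}$, which is the alternating-form computation recalled above, and (ii) the base-change bookkeeping, which is routine since each object involved is a pullback.
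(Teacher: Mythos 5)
Your argument is correct and is exactly the intended reading of the remark: the displayed map is the composition of the isomorphism $c_g$ of Theorem \ref{theorem1} with the projection onto the $\mathcal{S}_g$-summand, so its kernel is $c_g^{-1}(\Gamma^2(\pi_g^*\mathcal{F}_g^{\vee})\oplus 0)$, which is spanned by the $v_{ij}$ since $c_g(v_{ij})=(\varphi_{ij},0)$ and the $\varphi_{ij}$ trivialize $\Gamma^2(\pi_g^*\mathcal{F}_g^{\vee})$, giving a subbundle of rank $g(g+1)/2$. The symmetry check and the base-change bookkeeping you include are the only details the paper leaves implicit, and they match the computations already carried out in the proof of Theorem \ref{theorem1}.
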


\begin{lemma}\label{corocommute1}
Let $\theta$ be a section of $T_{\mathcal{B}_{g,S}/S}$ such that $\nabla_{\theta}\omega_i=\nabla_{\theta}\eta_i=0$ for every $1\le i \le g$. Then $\theta=0$.
\end{lemma}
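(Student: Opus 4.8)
The plan is to read off the conclusion directly from Theorem~\ref{theorem1}. All of the constructions entering that theorem and its proof---the moduli stack $\mathcal{A}_{g,S}$, the vector bundles $\mathcal{H}_{g,S}$ and $\mathcal{F}_{g,S}$, the universal Gauss--Manin $S$-connection, the universal Kodaira--Spencer isomorphism (see \ref{ksiso}), the stack $\mathcal{V}_g$, and the immersion $i_g$---are compatible with the base change $S\to\Spec\ZZ$. Consequently Theorem~\ref{theorem1} yields over $S$ an \emph{injective} morphism of $\mathcal{O}_{\mathcal{B}_{g,S,\et}}$-modules
\begin{align*}
c_{g,S}:T_{\mathcal{B}_{g,S}/S}\longrightarrow\Gamma^2(\pi_{g,S}^*\mathcal{F}_{g,S}^{\vee})\oplus\pi_{g,S}^*\mathcal{H}_{g,S}^{\oplus g},
\end{align*}
which on an étale scheme $(U,u)$ over $\mathcal{B}_{g,S}$ corresponding to the object $(X,\lambda,b)_{/U}$, $b=(\omega_1,\dots,\omega_g,\eta_1,\dots,\eta_g)$, is given by $c_{g,S}(\theta)=(\kappa_u(\theta),\nabla_\theta\eta_1,\dots,\nabla_\theta\eta_g)$ for $\theta$ a section of $T_{U/S}$.

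First I would use the hypothesis $\nabla_\theta\eta_k=0$ for all $1\le k\le g$ to see that the last $g$ components of $c_{g,S}(\theta)$ vanish. To handle the first component, I would invoke the explicit description of the Kodaira--Spencer morphism in Remark~\ref{explicite}: locally, in terms of the symplectic-Hodge basis $b$,
\begin{align*}
\kappa_u(\theta)=\sum_{i=1}^{g}\langle\ \ ,\eta_i\rangle_{\lambda}\otimes\langle\ \ ,\nabla_\theta\omega_i\rangle_{\lambda}\text{.}
\end{align*}
Since $\nabla_\theta\omega_i=0$ for every $1\le i\le g$, every summand is zero, so $\kappa_u(\theta)=0$. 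Hence $c_{g,S}(\theta)=0$, and the injectivity of $c_{g,S}$ forces $\theta=0$.

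I do not anticipate a genuine obstacle: the argument is a two-line consequence of Theorem~\ref{theorem1} and Remark~\ref{explicite}, and the only point deserving a sentence is the (routine) compatibility of Theorem~\ref{theorem1} with base change. Should one wish to avoid even that remark, an alternative is available: $\kappa_u(\theta)=0$ together with the fact that the universal Kodaira--Spencer morphism on $\mathcal{A}_{g,S}$ is an isomorphism (see \ref{ksiso}) gives $D\pi_{g,S}(\theta)=0$, so $\theta$ is a section of $T_{\mathcal{B}_{g,S}/\mathcal{A}_{g,S}}$; using the identification of this relative tangent bundle coming from the $P_{g,S}$-torsor structure of $\pi_{g,S}$ (Lemma~\ref{torsor}), the remaining equations $\nabla_\theta\omega_i=0$ say exactly that $\theta$ lies in the kernel of the infinitesimal action of $P_{g,S}$ on symplectic-Hodge bases, which is trivial since that action is free. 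I would write up the first approach.
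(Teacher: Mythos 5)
Your proof is correct, but it follows a different route from the paper's. The paper does not verify that $c_{g,S}(\theta)=0$ directly; instead it uses Remark~\ref{remarkrg} (itself a consequence of Theorem~\ref{theorem1}) to see that the hypothesis $\nabla_\theta\eta_k=0$ places $\theta$ in the subbundle $\mathcal{R}_{g,S}$ generated by the higher Ramanujan vector fields, writes $\theta=\sum_{i\le j}f_{ij}v_{ij}$, and then kills the coefficients $f_{ij}$ by an induction on $i$, feeding the equations $\nabla_\theta\omega_{i}=0$ through the formulas $\nabla_{v_{ij}}\omega_k$ of Proposition~\ref{caracchamps}. You instead show that both components of $c_{g,S}(\theta)$ vanish — the $\mathcal{H}$-components by hypothesis, the Kodaira--Spencer component via the explicit formula of Remark~\ref{explicite} — and conclude by injectivity; this bypasses the vector fields $v_{ij}$ and the induction entirely, and is essentially the same pattern as the uniqueness argument in the proof of Proposition~\ref{caracchamps}. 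Both arguments ultimately rest on Theorem~\ref{theorem1}, and both need the routine observation that its conclusion persists after the base change to $S$ (the paper uses this implicitly when asserting Remark~\ref{remarkrg} over $S$; you make it explicit, correctly noting that $T_{\mathcal{B}_{g,S}/S}$, $\mathcal{H}_{g,S}$, $\mathcal{F}_{g,S}$, the Gauss--Manin connection and the Kodaira--Spencer isomorphism are all compatible with base change, so $c_{g,S}$ is again an isomorphism onto $\Gamma^2(\pi_{g,S}^*\mathcal{F}_{g,S}^{\vee})\oplus\mathcal{S}_{g,S}$). What your approach buys is brevity and independence from Definition~\ref{deframvf} and Proposition~\ref{caracchamps}; what the paper's buys is that it stays entirely inside the formalism of the vector fields it has just introduced, which is the object of study in the surrounding statements. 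Your alternative sketch via $D\pi_{g,S}(\theta)=0$ and the freeness of the infinitesimal $P_{g,S}$-action is also viable but would require more detail to be a complete proof; writing up the first argument, as you propose, is the right choice.
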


\begin{proof}
Let $\theta$ be as in the statement. By Remark \ref{remarkrg}, $\theta$ is in the subbundle $\mathcal{R}_{g,S}$ of $T_{\mathcal{B}_{g,S}/S}$, thus there exist sections $(f_{ij})_{1\le i \le j \le g}$ of $\mathcal{O}_{\mathcal{B}_{g,S,\et}}$ such that
\begin{align*}
\theta = \sum_{1\le i \le j \le g}f_{ij}v_{ij}\text{.}
\end{align*}
We prove that each $f_{ij}=0$ by induction on $i$. For $i=1$, we have by Proposition \ref{caracchamps}
\begin{align*}
0 = \nabla_{\theta}\omega_1 = \sum_{1 \le i \le j \le g}f_{ij}\nabla_{v_{ij}}\omega_1 = \sum_{j=1}^gf_{1j}\eta_{j}\text{,} 
\end{align*}
thus $f_{1j}=0$ for every $1\le j \le g$. If $2\le i_0 \le g$ and $f_{ij}=0$ for every $i< i_0$ and $i\le j \le g$, we have
\begin{align*}
0 = \nabla_{\theta}\omega_{i_0} = \sum_{i_0 \le i \le j \le g}f_{ij}\nabla_{v_{ij}}\omega_{i_0}= \sum_{j=i_0}^gf_{i_0j}\eta_j\text{,} 
\end{align*}
thus $f_{i_0j}=0$ for every $i_0\le j \le g$.
\end{proof}

Let $[ \ , \ ]$ denote the Lie bracket in $T_{\mathcal{B}_g/\ZZ}$.

\begin{coro} \label{corocommute}
 The higher Ramanujan vector fields commute. That is,
\begin{align*}
[v_{ij},v_{i'j'}] = 0
\end{align*}
for any $1\le i\le j \le g$ and $1\le i'\le j' \le g$.
\end{coro}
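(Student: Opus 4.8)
The plan is to deduce $[v_{ij},v_{i'j'}]=0$ from Lemma \ref{corocommute1} together with the integrability of the Gauss-Manin connection. Since $[v_{ij},v_{i'j'}]$ is again a global section of $T_{\mathcal{B}_g/\ZZ}$, Lemma \ref{corocommute1} reduces the statement to checking that
\[
\nabla_{[v_{ij},v_{i'j'}]}\omega_k = 0 \qquad\text{and}\qquad \nabla_{[v_{ij},v_{i'j'}]}\eta_k = 0
\]
for every $1\le k\le g$, where $\nabla$ denotes the ``universal'' Gauss-Manin connection on $\pi_g^{*}\mathcal{H}_g$ over $\mathcal{B}_g/\ZZ$.

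To carry this out I would first recall that $\nabla$ is integrable: the Gauss-Manin connection on the de Rham cohomology of an abelian scheme over a smooth base is integrable (see \ref{kodairaspencer}), and $\nabla$ on $\pi_g^{*}\mathcal{H}_g$ is obtained by pulling back the universal Gauss-Manin connection on $\mathcal{A}_g$, which étale-locally is given by these classical connections (cf. \ref{ksiso}). Hence the curvature of $\nabla$ vanishes, so that for any sections $\theta_1,\theta_2$ of $T_{\mathcal{B}_g/\ZZ}$ and any section $s$ of $\pi_g^{*}\mathcal{H}_g$ one has
\[
\nabla_{[\theta_1,\theta_2]}s = \nabla_{\theta_1}\nabla_{\theta_2}s - \nabla_{\theta_2}\nabla_{\theta_1}s .
\]
I would then apply this with $\theta_1=v_{ij}$, $\theta_2=v_{i'j'}$ and invoke Proposition \ref{caracchamps}. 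For $s=\eta_k$ both terms on the right vanish by part (2) of that proposition, giving $\nabla_{[v_{ij},v_{i'j'}]}\eta_k=0$. For $s=\omega_k$, part (1) shows that $\nabla_{v_{i'j'}}\omega_k$ equals one of $\eta_{i'}$, $\eta_{j'}$ or $0$; in each case part (2) gives $\nabla_{v_{ij}}\bigl(\nabla_{v_{i'j'}}\omega_k\bigr)=0$, and by the symmetric computation $\nabla_{v_{i'j'}}\bigl(\nabla_{v_{ij}}\omega_k\bigr)=0$, hence $\nabla_{[v_{ij},v_{i'j'}]}\omega_k=0$. Lemma \ref{corocommute1} then concludes.

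The computation itself is immediate once the curvature identity is in hand, so the only point deserving attention — the ``hard part'', such as it is — is the integrability of $\nabla$ and the validity of the formula $\nabla_{[\theta_1,\theta_2]}=\nabla_{\theta_1}\nabla_{\theta_2}-\nabla_{\theta_2}\nabla_{\theta_1}$ on the étale site of the Deligne-Mumford stack $\mathcal{B}_g$. Both are formal: evaluating on an étale scheme $(U,u)$ over $\mathcal{B}_g$ reduces the two assertions to the classical flatness of the Gauss-Manin connection of the abelian scheme corresponding to $u$ over the smooth $\ZZ$-scheme $U$, together with the standard curvature formula for flat connections on vector bundles.
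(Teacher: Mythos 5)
Your proposal is correct and follows essentially the same route as the paper: flatness of the Gauss-Manin connection gives the curvature identity $\nabla_{[\theta_1,\theta_2]}=\nabla_{\theta_1}\nabla_{\theta_2}-\nabla_{\theta_2}\nabla_{\theta_1}$, Proposition \ref{caracchamps} kills both double derivatives applied to $\omega_k$ and $\eta_k$, and Lemma \ref{corocommute1} concludes. The only cosmetic difference is that the paper first observes $[v_{ij},v_{i'j'}]$ lies in $\mathcal{R}_g$ (so that only $\nabla_\theta\omega_k=0$ remains to be checked), whereas you verify both conditions of the lemma directly, which amounts to the same computation.
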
 

\begin{proof}
Let us first remark that, as the Gauss-Manin connection is integrable, for any sections $\theta$ and $\theta'$ of $T_{\mathcal{B}_g/\ZZ}$, we have
\begin{align*}
\nabla_{[\theta,\theta']} = \nabla_{\theta} \nabla_{\theta'} - \nabla_{\theta'} \nabla_{\theta}\text{.}
\end{align*}
This implies that $\mathcal{R}_g$ is integrable: if $\theta$ and $\theta'$ are both sections of $\mathcal{R}_g$, then $[\theta,\theta']$ is a section of $\mathcal{R}_g$. In particular, $\theta\defeq [v_{ij},v_{i'j'}]$ is a section of $\mathcal{R}_g$. By Lemma \ref{corocommute1}, to prove that $\theta=0$, it is sufficient to prove that $\nabla_{\theta}\omega_k=0$ for every $1\le k \le g$. 

We have
\begin{align*}
\nabla_{\theta} \omega_k  = \nabla_{v_{ij}}(\nabla_{v_{i'j'}}\omega_k) - \nabla_{v_{i'j'}}(\nabla_{v_{ij}}\omega_k)\text{.}
\end{align*}
It follows from Proposition \ref{caracchamps} that $\nabla_{v_{i'j'}}\omega_k$ (resp. $\nabla_{v_{ij}}\omega_k$) is an element of $\{0,\eta_1,\ldots,\eta_g\}$; hence $\nabla_{v_{ij}}(\nabla_{v_{i'j'}}\omega_k) = 0$ (resp. $\nabla_{v_{i'j'}}(\nabla_{v_{ij}}\omega_k) = 0$).
\end{proof}


\subsection{The action of Siegel parabolic subgroup $P_g$ on the higher Ramanujan vector fields}

Geometrically, $\pi_g:\mathcal{B}_g \to \mathcal{A}_g$ may be regarded as a ``principal $P_g$-bundle'' over $\mathcal{A}_g$ (cf. Lemma \ref{torsor}). It is therefore natural to ask how the integrable subbundle $\mathcal{R}_g$ of $T_{\mathcal{B}_g/\ZZ}$ (cf. Remark \ref{remarkrg} and Corollary \ref{corocommute}) transforms under the action of $P_g$.

In order to formulate precise statements, fix an affine base scheme $S = \Spec R$ and let $p \in P_g(S)$. Then, $p$ induces an $S$-automorphism of $\mathcal{B}_{g,S}$ given by
\begin{align*}
p: \mathcal{B}_{g,S} &\to \mathcal{B}_{g,S}\\
                (X,\lambda,b)_{/U} &\mapsto (X,\lambda,b\cdot p)_{/U}
\end{align*}
where we have implicitly identified $p$ with its image by the natural map $P_g(S)\to P_g(U)$ to compute $b\cdot p$. 

\begin{prop} \label{B=0}
Let us write
\begin{align*}
p = \left(\begin{array}{cc} A & B \\ 0 & (A\transp)^{-1} \end{array} \right)\in P_g(S)\text{,}
\end{align*}
and consider the tangent map
\begin{align*}
Dp : T_{\mathcal{B}_{g,S}/S} \to p^*T_{\mathcal{B}_{g,S}/S}\text{.}
\end{align*}
Then $Dp(\mathcal{R}_{g,S})\subset p^*\mathcal{R}_{g,S}$ if and only if $B=0$.
\end{prop}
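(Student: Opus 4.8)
The plan is to turn the inclusion $Dp(\mathcal{R}_{g,S})\subset p^*\mathcal{R}_{g,S}$ into an explicit condition on the block $B$, by combining the description of $\mathcal{R}_{g,S}$ in Remark~\ref{remarkrg} as the kernel of $\theta\mapsto(\nabla_\theta\eta_1,\ldots,\nabla_\theta\eta_g)$, the explicit action of $P_g$ on the universal symplectic-Hodge basis, and the compatibility of the Gauss-Manin connection with the automorphism $p$. Since $Dp$ is an isomorphism and $\mathcal{R}_{g,S}$ has constant rank $g(g+1)/2$, the asserted inclusion is in fact equivalent to saying that $p$ carries the subbundle $\mathcal{R}_{g,S}$ to itself (that is, preserves the corresponding foliation), although I will not need this reformulation.

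First I would record how $p$ transforms the universal basis $b_g=(\omega_1,\ldots,\omega_g,\eta_1,\ldots,\eta_g)$. Since $\pi_{g,S}\circ p=\pi_{g,S}$, there is a canonical identification $p^*\pi_{g,S}^*\mathcal{H}_{g,S}=\pi_{g,S}^*\mathcal{H}_{g,S}$, under which the definition of the $P_g$-action reads $p^*b_g=b_g\cdot p$; with $A,B$ as in the statement this gives $p^*\omega_j=\sum_iA_{ij}\omega_i$ and $p^*\eta_k=\sum_iB_{ik}\omega_i+\sum_i((A\transp)^{-1})_{ik}\eta_i$. Next, since the universal Gauss-Manin connection $\nabla$ over $\mathcal{B}_{g,S}$ is pulled back from $\mathcal{A}_{g,S}$ (paragraph~\ref{ksiso}) and $p$ is an automorphism of $\mathcal{B}_{g,S}$ over $\mathcal{A}_{g,S}$, the connection $p^*\nabla$ on $p^*\pi_{g,S}^*\mathcal{H}_{g,S}=\pi_{g,S}^*\mathcal{H}_{g,S}$ coincides with $\nabla$; the standard formula for pullback connections then yields the equivariance identity $\nabla_{Dp(\theta)}\alpha=\nabla_\theta(p^*\alpha)$ for every section $\alpha$ of $\pi_{g,S}^*\mathcal{H}_{g,S}$ and every section $\theta$ of $T_{\mathcal{B}_{g,S}/S}$ (here $\nabla_{Dp(\theta)}\alpha$ denotes the covariant derivative of $\alpha$ along $Dp(\theta)$, viewed as a section of $p^*\pi_{g,S}^*\mathcal{H}_{g,S}=\pi_{g,S}^*\mathcal{H}_{g,S}$).

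Testing membership in $p^*\mathcal{R}_{g,S}$ fiberwise and using this identity together with Remark~\ref{remarkrg}, one finds that $Dp(\theta)$ is a section of $p^*\mathcal{R}_{g,S}$ if and only if $\nabla_\theta(p^*\eta_k)=0$ for every $k$. For $\theta$ a section of $\mathcal{R}_{g,S}$ one has $\nabla_\theta\eta_i=0$, and since the entries of $A$ and $B$ come from $\mathcal{O}_S$ while $\theta$ is an $S$-derivation, the expression $\nabla_\theta(p^*\eta_k)$ collapses to $\sum_iB_{ik}\nabla_\theta\omega_i$. At this point I would invoke Proposition~\ref{caracchamps}: taking $\theta=\sum_{m=1}^gv_{mm}$, a section of $\mathcal{R}_{g,S}$ with $\nabla_\theta\omega_i=\eta_i$ for all $i$, the condition becomes $\sum_iB_{ik}\eta_i=0$ for every $k$. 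As $\eta_1,\ldots,\eta_g$ are part of a basis of $\pi_{g,S}^*\mathcal{H}_{g,S}$, every $B_{ik}$ must vanish as a global section of $\mathcal{O}_{\mathcal{B}_{g,S,\et}}$, hence in $R$ because $\mathcal{B}_{g,S}\to S$ is faithfully flat; this proves the ``only if'' direction. Conversely, if $B=0$ then $p^*\eta_k=\sum_i((A\transp)^{-1})_{ik}\eta_i$, so $\nabla_\theta(p^*\eta_k)=\sum_i((A\transp)^{-1})_{ik}\nabla_\theta\eta_i=0$ for every section $\theta$ of $\mathcal{R}_{g,S}$, whence $Dp(\mathcal{R}_{g,S})\subset p^*\mathcal{R}_{g,S}$.

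The main obstacle I anticipate is setting up, cleanly and in the stacky language of paragraph~\ref{ksiso}, the identification $p^*\pi_{g,S}^*\mathcal{H}_{g,S}=\pi_{g,S}^*\mathcal{H}_{g,S}$ and the equivariance identity $\nabla_{Dp(\theta)}\alpha=\nabla_\theta(p^*\alpha)$; once these are established the rest is a two-line computation with Proposition~\ref{caracchamps}. A secondary and entirely routine point is the descent of the vanishing of the $B_{ik}$ from $\mathcal{B}_{g,S}$ to $R$ via faithful flatness --- alternatively one may simply restrict everything to a nonempty étale scheme over $\mathcal{B}_{g,S}$.
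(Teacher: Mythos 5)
Your proof is correct and follows essentially the same route as the paper's: reduce via Remark \ref{remarkrg} and the compatibility $p^*(\nabla_{p_*\theta}\alpha)=\nabla_\theta(p^*\alpha)$ to the vanishing of $\nabla_\theta(p^*\eta_k)$, then compute with Remark \ref{h*carac} and Proposition \ref{caracchamps}. The only cosmetic differences are that you test the ``only if'' direction on the single section $\sum_m v_{mm}$ instead of all generators $v_{ij}$, and that you make explicit the (harmless) descent of the vanishing of the $B_{ik}$ from $\mathcal{B}_{g,S}$ to $R$, which the paper leaves implicit.
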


Let us introduce some preliminary notation before proving this result. Note that $Dp$ induces an $R$-automorphism
\begin{align*}
p_*: \Gamma(\mathcal{B}_{g,S},T_{\mathcal{B}_{g,S}/S}) \to \Gamma(\mathcal{B}_{g,S},T_{\mathcal{B}_{g,S}/S})
\end{align*} 
which is compatible with the ``universal'' Gauss-Manin $S$-connection
\begin{align*}
\nabla : \pi_{g,S}^*\mathcal{H}_{g,S} \to \pi_{g,S}^*\mathcal{H}_{g,S} \tensor_{\mathcal{O}_{\mathcal{B}_{g,S,\et}}}\Omega^1_{\mathcal{B}_{g,S}/S}
\end{align*}
in the following sense. Denote by
\begin{align*}
p^* : \Gamma(\mathcal{B}_{g,S},\pi_{g,S}^*\mathcal{H}_{g,S}) \to \Gamma(\mathcal{B}_{g,S},\pi_{g,S}^*\mathcal{H}_{g,S})
\end{align*}
the $R$-automorphism induced by the isomorphism of vector bundles $p^*\pi_{g,S}^*\mathcal{H}_{g,S} \stackrel{\sim}{\to}\pi_{g,S}^* \mathcal{H}_{g,S}$ (observe that $\pi_{g,S}\circ p = \pi_{g,S}$).  Then, for any $\alpha \in \Gamma(\mathcal{B}_{g,S},\pi_{g,S}^*\mathcal{H}_{g,S})$, and any $\theta \in \Gamma(\mathcal{B}_{g,S},T_{\mathcal{B}_{g,S}/S})$, we have
\begin{align} \label{comppbpf}
p^*(\nabla_{p_*\theta}\alpha) = \nabla_{\theta}(p^*\alpha)\text{.}
\end{align}

\begin{obs} \label{h*carac}
The automorphism $p^*$ introduced above is characterized by
\begin{align*}
(\begin{array}{cccccc}p^*\omega_1 & \cdots & p^*\omega_g & p^*\eta_1 & \cdots & p^*\eta_g\end{array}) =
(\begin{array}{cccccc}\omega_1 & \cdots & \omega_g & \eta_1 & \cdots & \eta_g\end{array}) \left(\begin{array}{cc} A & B \\ 0 & (A\transp)^{-1} \end{array} \right) 
\end{align*}
where $b_g = (\omega_1, \ldots,\omega_g,\eta_1,\ldots,\eta_g)$ is the universal symplectic-Hodge basis of $\pi_{g,S}^*\mathcal{H}_{g,S}$.
\end{obs}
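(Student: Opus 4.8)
The plan is to read off the formula directly from the defining (``universal'') property of the symplectic-Hodge basis $b_g$ together with the functoriality of pullback, verifying the identity étale-locally over $\mathcal{B}_{g,S}$ and then invoking the sheaf axiom for $\pi_{g,S}^*\mathcal{H}_{g,S}$ on $\Et(\mathcal{B}_{g,S})$. Recall first that $b_g$ is characterized by $u^*b_g = b$ for every étale scheme $(U,u)$ over $\mathcal{B}_{g,S}$ corresponding to an object $(X,\lambda,b)_{/U}$, and that, since $p$ acts on objects by $(X,\lambda,b)_{/U}\mapsto (X,\lambda,b\cdot p)_{/U}$, the composite $p\circ u : U \to \mathcal{B}_{g,S}$ corresponds to $(X,\lambda,b\cdot p)_{/U}$; here $b\cdot p$ is computed as in \ref{uppertriang}, using the image of $p$ under $P_g(S)\to P_g(U)$.

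The heart of the argument is then a short local computation. Fix an étale scheme $(U,u)$ as above and apply $u^*$ to $p^*b_g$. By definition of the automorphism $p^*$ and the contravariant functoriality of pullback, $u^*(p^*b_g) = (p\circ u)^*b_g$, and by the universal property applied to the étale scheme $(U,p\circ u)$ this equals $b\cdot p$. On the other hand, the entries of $p$ lie in $R$, so $b_g\cdot p$ is an $\mathcal{O}$-linear combination of $b_g$ with coefficients pulled back from $S$; hence $u^*(b_g\cdot p) = (u^*b_g)\cdot p = b\cdot p$, where $b_g\cdot p$ denotes the global section of $\pi_{g,S}^*\mathcal{H}_{g,S}$ obtained by right-multiplying the row vector $b_g$ by the matrix $p$ (with entries pulled back to $\mathcal{B}_{g,S}$). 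Thus $u^*(p^*b_g) = u^*(b_g\cdot p)$ for every object $(U,u)$ of $\Et(\mathcal{B}_{g,S})$; since such étale schemes form a covering and $\pi_{g,S}^*\mathcal{H}_{g,S}$ is a sheaf, I conclude $p^*b_g = b_g\cdot p$. Reading this off column by column yields exactly the displayed matrix identity, i.e. $p^*\omega_j = \sum_i\omega_i A_{ij}$ and $p^*\eta_j = \sum_i\omega_i B_{ij} + \sum_i\eta_i\bigl((A\transp)^{-1}\bigr)_{ij}$.

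Finally, for the assertion that this \emph{characterizes} $p^*$, I would note that $b_g$ is a basis of $\pi_{g,S}^*\mathcal{H}_{g,S}$ and that $p^*$ is semilinear with respect to the ring automorphism $f\mapsto f\circ p$ of $\Gamma(\mathcal{B}_{g,S},\mathcal{O}_{\mathcal{B}_{g,S}})$; writing an arbitrary section as $\sum_k f_k (b_g)_k$ and applying $p^*$, its value is determined by the $p^*(b_g)_k$, so the formula pins down $p^*$ uniquely. The computation itself is essentially formal, and the only point requiring care is the bookkeeping in the middle step: one must reconcile the two a priori distinct meanings of the right action ``$\cdot\,p$'' — the one on objects of $\mathcal{B}_{g,S}(U)$ from \ref{uppertriang} and the one on global sections of $\pi_{g,S}^*\mathcal{H}_{g,S}$ — and check that the canonical identification $p^*\pi_{g,S}^*\mathcal{H}_{g,S}\cong\pi_{g,S}^*\mathcal{H}_{g,S}$ induced by $\pi_{g,S}\circ p = \pi_{g,S}$ is compatible, after applying $u^*$, with pullback along $p\circ u$. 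This is a consequence of the cocycle condition for pullbacks and is where I would be most careful.
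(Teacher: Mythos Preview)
Your argument is correct; in the paper this statement is presented as a remark with no proof, since it is an immediate consequence of the definition of $p^*$ (via $\pi_{g,S}\circ p = \pi_{g,S}$), of the action $(X,\lambda,b)\mapsto (X,\lambda,b\cdot p)$, and of the universal property of $b_g$. Your étale-local verification $u^*(p^*b_g)=(p\circ u)^*b_g=b\cdot p=u^*(b_g\cdot p)$ is exactly the unpacking one would write down if pressed, and the semilinearity observation for the ``characterizes'' part is the right one.
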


\begin{proof}[Proof of Proposition \ref{B=0}]
Since the vector bundle $\mathcal{R}_{g,S}$ is generated by the higher Ramanujan vector fields $v_{ij}$, we have $Dp(\mathcal{R}_{g,S})\subset p^*\mathcal{R}_{g,S}$ if and only if 
\begin{align*}
p_*v_{ij} \in \Gamma(\mathcal{B}_{g,S},\mathcal{R}_{g,S})
\end{align*}
for every $1\le i \le j \le g$. Further, by Remark \ref{remarkrg}, $p_*v_{ij}$ lies in  $\Gamma(\mathcal{B}_{g,S},\mathcal{R}_{g,S})$ if and only if
\begin{align*}
\nabla_{p_*v_{ij}}\eta_k = 0
\end{align*}
for every $1\le k \le g$. Finally, by the compatibility (\ref{comppbpf}), we conclude that $Dp(\mathcal{R}_{g,S})\subset p^*\mathcal{R}_{g,S}$ if and only if
\begin{align*}
\nabla_{v_{ij}}(p^*\eta_k) = 0
\end{align*}
for every $1\le i \le j \le g$ and $1\le k \le g$.

Now, by Remark \ref{h*carac}, we have
\begin{align*}
p^* \eta_k = \sum_{l=1}^g\omega_lB_{lk} + \eta_l(A^{-1})_{kl}\text{.}
\end{align*}
Using Proposition \ref{caracchamps}, we obtain
\begin{align*}
\nabla_{v_{ij}}(p^*\eta_k) = \sum_{l=1}^g(\nabla_{v_{ij}}\omega_l)B_{lk} = \begin{cases}
               \eta_iB_{ik} & i=j\\
               \eta_j B_{ik} + \eta_i B_{jk} & i< j 
              \end{cases}
\end{align*}
The assertion follows.
\end{proof}

Let $L_g$ be the subgroup scheme of $P_g$ given by
\begin{align*}
L_g(V) = \left.\left\{\left(\begin{array}{cc} A & 0 \\ 0 & (A\transp)^{-1} \end{array} \right) \in M_{2g\times 2g}(R_0) \ \right| A \in {\GL}_g(R_0) \right\}
\end{align*}
for any affine scheme $V=\Spec R_0$. The above proposition shows in particular that the action of $L_g$ on $\mathcal{B}_g$ preserves the integrable subbundle $\mathcal{R}_g$. The next proposition gives a precise transformation law for the higher Ramanujan vector fields $v_{ij}$ under the action of $L_g$.

\begin{prop}
Let $v = (v_{ij})_{1\le i,j\le g}$ be the unique symmetric matrix of global sections of $T_{\mathcal{B}_{g,S}/S}$ where $v_{ij}$ are the higher Ramanujan vector fields for $1\le i \le j \le g$. For every
\begin{align*}
p = \left(\begin{array}{cc} A & 0 \\ 0 & (A^{\mathsf{T}})^{-1} \end{array} \right) \in L_g(S)
\end{align*}
if we denote $p_*v= (p_*v_{ij})_{1\le i, j \le g}$, then we have the equality of matrices of sections $T_{\mathcal{B}_{g,S}/S}$ over $S$
\begin{align*}
p_*v = A v A^{\mathsf{T}}\text{,}
\end{align*}
\end{prop}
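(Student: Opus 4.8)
The plan is to compute, for each $1\le i\le j\le g$, the pushforward $p_*v_{ij}$ in terms of the higher Ramanujan vector fields by evaluating its Gauss--Manin derivatives on the universal symplectic--Hodge basis $b_g=(\omega_1,\dots,\omega_g,\eta_1,\dots,\eta_g)$ and then applying the uniqueness statement of Lemma~\ref{corocommute1}. Write $p=\bigl(\begin{smallmatrix}A&0\\0&(A\transp)^{-1}\end{smallmatrix}\bigr)\in L_g(S)$; since its upper-right block vanishes, Proposition~\ref{B=0} gives $Dp(\mathcal{R}_{g,S})\subseteq p^*\mathcal{R}_{g,S}$, so by Remark~\ref{remarkrg} each $p_*v_{ij}$ is a global section of $\mathcal{R}_{g,S}$. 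The same holds for each entry $(AvA\transp)_{ij}=\sum_{k,l}A_{ik}A_{jl}v_{kl}$, being an $\mathcal{O}_{\mathcal{B}_{g,S,\et}}$-linear combination of the $v_{kl}$. In particular both $p_*v_{ij}$ and $(AvA\transp)_{ij}$ satisfy $\nabla_{(\cdot)}\eta_k=0$ for all $k$, so by Lemma~\ref{corocommute1} applied to their difference it suffices to prove $\nabla_{p_*v_{ij}}\omega_m=\nabla_{(AvA\transp)_{ij}}\omega_m$ for all $m$.

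For the left-hand side I would use the compatibility~(\ref{comppbpf}) between $p_*$ and the universal Gauss--Manin connection: taking $\theta=v_{ij}$ and $\alpha=\omega_m$ gives $p^*\bigl(\nabla_{p_*v_{ij}}\omega_m\bigr)=\nabla_{v_{ij}}(p^*\omega_m)$, hence $\nabla_{p_*v_{ij}}\omega_m=(p^*)^{-1}\bigl(\nabla_{v_{ij}}(p^*\omega_m)\bigr)$. By Remark~\ref{h*carac}, specialised to $B=0$, we have $p^*\omega_m=\sum_l A_{lm}\,\omega_l$ and $p^*\eta_k=\sum_l\bigl((A\transp)^{-1}\bigr)_{lk}\eta_l$, so that $(p^*)^{-1}\eta_k=\sum_l A_{kl}\,\eta_l$; substituting these together with the explicit description of $\nabla_{v_{ij}}\omega_l$ from Proposition~\ref{caracchamps} writes $\nabla_{p_*v_{ij}}\omega_m$ as an explicit $\mathcal{O}_{\mathcal{B}_{g,S,\et}}$-combination of $\eta_1,\dots,\eta_g$ with coefficients polynomial in the entries of $A$. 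For the right-hand side one expands $\nabla_{(AvA\transp)_{ij}}\omega_m=\sum_{k,l}A_{ik}A_{jl}\,\nabla_{v_{kl}}\omega_m$ and uses Proposition~\ref{caracchamps} once more. Comparing the two expressions is then a formal identity in the entries of $A$, which I would check by matching coefficients of each $\eta_m$; this yields the equality and hence, via Lemma~\ref{corocommute1}, the proposition.

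Everything here is local, so the verification can be carried out over an arbitrary affine étale scheme over $\mathcal{B}_{g,S}$; since the asserted formula is an identity of global sections, this is legitimate. The step I expect to require the most care is the final coefficient comparison: the trivialization $(\varphi_{ij})_{1\le i\le j\le g}$ of $\Gamma^2(\pi_g^*\mathcal{F}_g^\vee)$, hence the definition of the higher Ramanujan vector fields, treats diagonal indices $i=j$ and off-diagonal indices $i<j$ asymmetrically (compare the two cases in Proposition~\ref{caracchamps} and in the definition of $\varphi_{ij}$), so one must keep precise track of the symmetric-tensor normalization in order to land exactly on the matrix product $AvA\transp$. Apart from this bookkeeping, the argument is a direct application of~(\ref{comppbpf}), Remark~\ref{h*carac}, Proposition~\ref{caracchamps} and Lemma~\ref{corocommute1}.
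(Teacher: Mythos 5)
Your outline reproduces the paper's own proof: reduce, via Lemma~\ref{corocommute1} (applied to the difference) and horizontality of the $\eta_k$, to comparing $\nabla_{(\cdot)}\omega_k$ on both sides, and compute everything through the compatibility~(\ref{comppbpf}), Remark~\ref{h*carac} and Proposition~\ref{caracchamps}. The appeal to Proposition~\ref{B=0} is an unnecessary (though harmless) detour, since $\nabla_{v_{ij}}(p^{*}\eta_k)=0$ already puts $p_*v_{ij}$ in $\mathcal{R}_{g,S}$, and working with $(p^{*})^{-1}\bigl(\nabla_{v_{ij}}(p^{*}\omega_k)\bigr)$ instead of transporting both sides by $p^{*}$ is equivalent to the paper's formulation. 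So the strategy is fine.

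The genuine issue is the step you defer as ``a formal identity in the entries of $A$'': it is exactly where the content lies, and your worry about the diagonal normalization is justified, because with the normalization of Proposition~\ref{caracchamps} (in particular $\nabla_{v_{mm}}\omega_m=\eta_m$, \emph{not} $2\eta_m$) the coefficient comparison does not close on the entrywise identity $p_*v=AvA\transp$ once $g\ge 2$. Indeed, for $i<j$ one gets
\begin{align*}
\nabla_{p_*v_{ij}}\omega_k=(p^{*})^{-1}\bigl(A_{ik}\eta_j+A_{jk}\eta_i\bigr)=A_{ik}\sum_{l}A_{jl}\eta_l+A_{jk}\sum_{l}A_{il}\eta_l\text{,}
\end{align*}
whereas $\nabla_{(AvA\transp)_{ij}}\omega_k=\sum_{m,n}A_{im}A_{jn}\nabla_{v_{mn}}\omega_k$ picks up the term $m=n=k$ only once and equals the same expression minus $A_{ik}A_{jk}\eta_k$; already for $g=2$, $(i,j)=(1,2)$, $k=1$ the $\eta_1$-coefficients are $2A_{11}A_{21}$ versus $A_{11}A_{21}$. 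What the computation actually yields is
\begin{align*}
p_*v_{ij}=\sum_{m<n}(A_{im}A_{jn}+A_{in}A_{jm})v_{mn}+2\sum_{m}A_{im}A_{jm}v_{mm}\ (i<j)\text{,}\qquad p_*v_{ii}=\sum_{m<n}A_{im}A_{in}v_{mn}+\sum_{m}A_{im}^2v_{mm}\text{,}
\end{align*}
i.e.\ the congruence $p_*V=AVA\transp$ holds for the symmetric matrix $V$ with $V_{ij}=v_{ij}$ for $i\neq j$ and $V_{ii}=2v_{ii}$ (equivalently, $p_*\bigl(\sum_{i\le j}C_{ij}v_{ij}\bigr)=\sum_{i\le j}(A\transp CA)_{ij}v_{ij}$ for every constant symmetric $C$); for diagonal or monomial $A$, and for $g=1$, the discrepancy is invisible. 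The paper's displayed computation reaches the entrywise formula by applying the $i<j$ case of Proposition~\ref{caracchamps} uniformly, including to the terms $m=n$ and to $i=j$. So to complete your proof you cannot simply ``match coefficients'': you must carry out the two cases of Proposition~\ref{caracchamps} explicitly and then either renormalize the diagonal fields (use $2v_{mm}$ on the diagonal of the matrix) or record the transformation law in the corrected form above; as written, the proposal leaves precisely this gap open.
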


\begin{proof}
For each $1\le i \le j \le g$, put
\begin{align*}
w_{ij}\defeq \sum_{m,n=1}^gA_{im}v_{mn}A_{jn}\text{.}
\end{align*}
Then we must prove that $p_*v_{ij} = w_{ij}$ for every $1\le i \le j \le g$, which, by Lemma \ref{corocommute1}, is equivalent to proving that
\begin{align}\label{form1}
\nabla_{p_*v_{ij}}\omega_k = \nabla_{w_{ij}}\omega_k\text{, }\ \ \ \nabla_{p_*v_{ij}}\eta_k = \nabla_{w_{ij}}\eta_k
\end{align}
for every $1\le k \le g$. By compatibility (\ref{comppbpf}), equations (\ref{form1}) are equivalent to
\begin{align*}
\nabla_{v_{ij}}(p^*\omega_k) = p^*(\nabla_{w_{ij}}\omega_k)\text{, }\ \ \ \nabla_{v_{ij}}(p^*\eta_k) = p^*(\nabla_{w_{ij}}\eta_k)\text{.}
\end{align*}
As each $\eta_k$ is horizontal for the Gauss-Manin connection, we have $\nabla_{w_{ij}}\eta_k=0$. Further, as $p\in L_g(S)$, each $p^*\eta_k$ is an $R$-linear combination of $\eta_1,\ldots,\eta_g$; thus $p^*\eta_k$ is horizontal for the Gauss-Manin connection.

 We are thus reduced to proving that
\begin{align*}
\nabla_{v_{ij}}(p^*\omega_k) = p^*(\nabla_{w_{ij}}\omega_k)
\end{align*}
for every $1\le i \le j \le g$ and $1\le k \le g$. On the one hand, we have
\begin{align*}
p^*\omega_k = \sum_{l=1}^g\omega_lA_{lk}\text{,}
\end{align*}
so that, by Proposition \ref{caracchamps},
\begin{align*}
\nabla_{v_{ij}}(p^*\omega_k) = \sum_{l=1}^g(\nabla_{v_{ij}}\omega_l)A_{lk}= \eta_jA_{ik} + \eta_iA_{jk}\text{.}
\end{align*}
On the other hand,
\begin{align*}
\nabla_{w_{ij}}\omega_k &= \sum_{m,n=1}^g A_{im}(\nabla_{v_{mn}}\omega_k)A_{jn}\\
                      &= \sum_{n=1}^gA_{ik}\eta_nA_{jn} + \sum_{m=1}^gA_{im}\eta_mA_{jk}\\
                      &= A_{ik}\left(\sum_{n=1}^g\eta_nA_{jn} \right) + \left(\sum_{m=1}^gA_{im}\eta_m\right)A_{jk}\text{,}
\end{align*} 
hence, by Remark \ref{h*carac},
\begin{align*}
p^*(\nabla_{w_{ij}}\omega_k) = A_{ik}\left(\sum_{n=1}^gp^*\eta_n(A\transp)_{nj} \right) + \left(\sum_{m=1}^gA_{im}p^*\eta_m\right)A_{jk}= A_{ik}\eta_j + \eta_iA_{jk}\text{.}
\end{align*}
\end{proof}

\section{The case $g=1$: explicit equations} \label{caseg=1}

When $g=1$, we can compute explicit equations for $B_g$ and for the Ramanujan vector field.

\subsection{Explicit equation for the universal elliptic curve $X_1$ over $B_1$ and its universal symplectic-Hodge basis}

Fix a scheme $U$. Let us recall that every \emph{elliptic curve} $E$ over $U$ (namely, an abelian scheme of relative dimension 1) has a canonical unique principal polarization $\lambda_{E}:E \to E^t$ given, for any $U$-scheme $V$ and any point $P\in E(V)$, by 
\begin{align*}
\lambda_E(P)= \mathcal{O}_E([P]-[O])
\end{align*}
where $O\in E(V)$ denotes the identity section and $\mathcal{O}_E([P]-[O])$ denotes the class in $E^t(V)$ of the inverse of the ideal sheaf defined by the relative Cartier divisor $[P]-[O]$.

Therefore, the functor 
\begin{align*}
E \mapsto (E,\lambda_{E})
\end{align*}
defines an equivalence between the category of elliptic curves over $U$ and that of principally polarized elliptic curves over $U$. We can thus ``forget'' the principal polarization: an elliptic curve $E$ will always be assumed to be endowed with its canonical principal polarization $\lambda_{E}$. In particular, an object of $\mathcal{B}_1$ will be denoted simply by a ``couple'' $(E,b)_{/U}$.

\begin{obs}
The symplectic form induced by $\lambda_E$ coincides with the composition of the cup product in de Rham cohomology $H^1_{\dR}(E/U) \times H^1_{\dR}(E/U) \to H^2_{\dR}(E/U)$ with the trace map $H^2_{\dR}(E/U) \to \mathcal{O}_U$.  
\end{obs}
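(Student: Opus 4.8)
The plan is to reduce the statement to a short linear-algebra computation over a rank-$2$ module, using the explicit description of $\langle\ ,\ \rangle_{\lambda}$ extracted in the proof of Lemma~\ref{alternating}, and then to invoke one standard normalization property of the de Rham trace map.

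Since both bilinear forms are $\mathcal{O}_U$-bilinear and their formation commutes with base change, the assertion is local on $U$; so I would assume $U$ affine and $H^1_{\dR}(E/U)$ free. For an elliptic curve the canonical principal polarization $\lambda_E$ is induced by the relatively ample line bundle $\mathcal{O}_E([O])$ attached to the identity section $O$ (no localization is needed here), so the proof of Lemma~\ref{alternating} applies and gives, for sections $\gamma,\delta$ of $H^1_{\dR}(E/U)^{\vee}$,
\begin{align*}
Q_{\lambda_E}(\gamma,\delta)=(\gamma\wedge\delta)\bigl(c_{1,\dR}(\mathcal{O}_E([O]))\bigr)\text{,}
\end{align*}
where we use the cup-product isomorphism $\bigwedge^2 H^1_{\dR}(E/U)\stackrel{\sim}{\to}H^2_{\dR}(E/U)$ together with the induced perfect pairing between $\bigwedge^2 H^1_{\dR}(E/U)^{\vee}$ and $H^2_{\dR}(E/U)$.

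Next I would unwind the recipe $\langle Q_{\lambda_E}(\gamma,\cdot),Q_{\lambda_E}(\delta,\cdot)\rangle_{\lambda_E}=Q_{\lambda_E}(\gamma,\delta)$ that defines $\langle\ ,\ \rangle_{\lambda_E}$. Over the rank-$2$ bundle $H^1_{\dR}(E/U)$ this is immediate: picking a local basis $(e_1,e_2)$ with $e_1\cup e_2=c_{1,\dR}(\mathcal{O}_E([O]))$, one checks that $\langle e_1,e_2\rangle_{\lambda_E}=1$, and hence that
\begin{align*}
\langle\alpha,\beta\rangle_{\lambda_E}=\ell(\alpha\cup\beta)
\end{align*}
for all sections $\alpha,\beta$, where $\ell:H^2_{\dR}(E/U)\to\mathcal{O}_U$ is the unique $\mathcal{O}_U$-linear form with $\ell(c_{1,\dR}(\mathcal{O}_E([O])))=1$. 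Therefore the Remark is equivalent to the identity $\ell=\mathrm{tr}$, i.e.\ to
\begin{align*}
\mathrm{tr}\bigl(c_{1,\dR}(\mathcal{O}_E([O]))\bigr)=1\text{.}
\end{align*}
This last equality is the standard compatibility of the first de Rham Chern class of a degree-one relative effective Cartier divisor with the trace isomorphism $H^2_{\dR}(E/U)\stackrel{\sim}{\to}\mathcal{O}_U$ (the de Rham counterpart of the fact that the cohomology class of a point has integral $1$); I would cite it from \cite{BBM82} and its references, after checking that the sign conventions used there for $\Omega^{\bullet}_{E/U}[1]$, for $\dlog$, and for $\mathrm{tr}$ agree with the ones fixed in the present paper.

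The only genuine difficulty I foresee is this bookkeeping of conventions: one must reconcile the chain of sign choices entering $c_{1,\dR}$ (through the shifted complex), the morphism $\phi_{X/U}$ coming from the Poincaré line bundle (which is built into $Q_{\lambda}$), the definition of $\lambda_E$, and the normalization of $\mathrm{tr}$, in order to obtain the constant $+1$ rather than $-1$. The geometric content --- that $\mathcal{O}_E([O])$ has ``de Rham degree'' one along the fibers --- is classical; matching the two forms exactly, and not merely up to sign, is purely a matter of tracking these conventions.
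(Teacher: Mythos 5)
You should first note that the paper offers no proof of this remark at all (it is stated as an observation and then used implicitly, via the residue formula of \cite{DMOS82}, in the proof of Theorem \ref{casg=1}), so there is no argument of the author's to compare with; your reduction is indeed the natural one. Granting that $\lambda_E$ is induced by a line bundle $\mathcal{L}_0$ of fibrewise degree one, the formula $Q_{\lambda_E}(\gamma,\delta)=(\gamma\wedge\delta)(c_{1,\dR}(\mathcal{L}_0))$ from the proof of Lemma \ref{alternating}, your rank-$2$ duality computation giving $\langle\alpha,\beta\rangle_{\lambda_E}=\ell(\alpha\cup\beta)$ with $\ell(c_{1,\dR}(\mathcal{L}_0))=1$, and the normalization $\mathrm{tr}(c_{1,\dR})=\deg$ do reduce the remark to a single scalar identity, and this is surely how the statement is meant to be checked.

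The genuine gap is that the ``bookkeeping'' you defer is not peripheral: it contains the whole content of the remark beyond degree-one-ness, and one of the assertions you state as given is exactly where it hides. With the convention of \cite{GIT94} Definition 6.2, which the paper cites, the morphism induced by $\mathcal{L}=\mathcal{O}_E([O])$ is $P\mapsto t_P^*\mathcal{L}\otimes\mathcal{L}^{-1}$, and since $t_P^*\mathcal{O}_E([O])\cong\mathcal{O}_E([-P])$ and $[-P]-[O]\sim-([P]-[O])$, this morphism is $P\mapsto\mathcal{O}_E([O]-[P])$, i.e.\ $-\lambda_E$ for the paper's normalization $\lambda_E(P)=\mathcal{O}_E([P]-[O])$. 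So the claim that $\mathcal{O}_E([O])$ induces $\lambda_E$ is itself a sign-sensitive statement that must be reconciled with the paper's conventions for $E^t$, the Poincar\'e bundle and $\phi_{X/U}$, and cannot simply be asserted; and since replacing $\lambda$ by $-\lambda$ replaces $\langle\ ,\ \rangle_{\lambda}$ by its negative, while both pairings in the remark are alternating, ``up to sign'' is precisely the ambiguity the remark is supposed to resolve. As written, your argument proves the remark only up to a universal sign $\mathrm{tr}(c_{1,\dR}(\mathcal{L}_0))\in\{\pm 1\}$. A clean way to close it: observe that both forms are compatible with arbitrary base change, so this sign is a locally constant function on the (connected) moduli problem and it suffices to verify it for one elliptic curve over $\CC$, where the polarization form can be computed via the Betti--de Rham comparison with the classical Riemann form and the cup-product-plus-trace form via the residue (Legendre) relation, exactly the computation invoked in the proof of Theorem \ref{casg=1}; alternatively, carry out the full sign chase through the conventions of \cite{BBM82} 0.3 and 5.1. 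Until one of these is done, the equality (rather than the equality up to sign) is not established.
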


\begin{theorem} \label{casg=1}
Let 
\begin{align*}
B_1 \defeq \Spec \ZZ[1/2,b_2,b_4,b_6,\Delta^{-1}]
\end{align*}
where
\begin{align*}
\Delta \defeq \frac{b_2^2(b_4^2 - b_2b_6)}{4} - 8 b_4^3 - 27b_6^2 + 9b_2b_4b_6 = 16\, {\rm disc}\left(x^3 + \frac{b_2}{4}x^2 + \frac{b_4}{2}x + \frac{b_6}{4}\right)\text{,}
\end{align*}
and let $X_1$ be the elliptic curve over $B_1$ given by the equation
\begin{align*}
y^2 = x^3 + \frac{b_2}{4}x^2 + \frac{b_4}{2}x + \frac{b_6}{4}\text{.}
\end{align*}
Then $b_1 = (\omega_1,\eta_1)$ defined by 
\begin{align*}
\omega_1 \defeq \frac{dx}{2y}\text{, } \ \ \ \eta_1 \defeq x\frac{dx}{2y}
\end{align*}
is a symplectic-Hodge basis of ${X_1}_{/B_1}$ and the morphism $B_1 \to \mathcal{B}_1$ corresponding to $(X_1,b_1)_{/B_1}$ induces an isomorphism of $B_1$ with the $\ZZ[1/2]$-stack $\mathcal{B}_{1,\ZZ[1/2]}$.
\end{theorem}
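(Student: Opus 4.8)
The plan is to set up the classical Weierstrass-type theory for elliptic curves over a base in which $2$ is invertible, identify the universal object, and then check that this identification matches the moduli problem defining $\mathcal{B}_1$.

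First I would recall that over a base scheme $U$ with $2$ invertible, every elliptic curve $E/U$ admits, Zariski-locally, a Weierstrass equation of the form $y^2 = x^3 + a_2 x^2 + a_4 x + a_6$, and the choice of such an equation is equivalent to the choice of a nonvanishing invariant differential $\omega \in \Gamma(U, F^1(E/U))$ together with a ``coordinate'' — concretely, $\omega = dx/(2y)$ determines the equation up to the residual transformations $x \mapsto x + r$, which shift $(a_2,a_4,a_6)$ in a unipotent way. I would introduce the quantities $b_2 = 4a_2$, $b_4 = 2a_4$, $b_6 = 4a_6$ (matching the normalization in the statement so that $\omega_1 = dx/(2y)$, $\eta_1 = x\,dx/(2y)$), note that $(\omega_1, \eta_1)$ is a basis of $H^1_{\dR}(E/U)$ — here $\eta_1$ is a differential of the second kind whose class is nonzero, a standard fact provable by reduction to a geometric point — and compute the cup-product pairing $\langle \omega_1, \eta_1 \rangle$. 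The key computation is that this pairing equals $\pm 1$ (a residue computation at infinity: $\langle dx/(2y), x\, dx/(2y)\rangle$ is, up to sign, $\mathrm{Res}_{\infty}$ of $x \cdot (dx/(2y)) \cdot (\text{a primitive of } dx/(2y))$, which is $1$); with the sign conventions of the paper this makes $b_1$ a symplectic-Hodge basis, proving the first assertion, and this forces $\Delta$ to be a unit (the curve is smooth iff the cubic has nonzero discriminant), which is why the localization at $\Delta^{-1}$ appears.

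Next I would prove the stack isomorphism by exhibiting an inverse on $T$-points, for $T$ a $\ZZ[1/2]$-scheme. Given an object $(E, b)_{/T}$ of $\mathcal{B}_{1}(T)$ with $b = (\omega, \eta)$, I would first use $\omega \in \Gamma(T, F^1(E/T))$, which is a nowhere-vanishing section of the rank-one Hodge bundle, to produce, \emph{globally} on $T$, a Weierstrass model $y^2 = x^3 + \tfrac{b_2}{4}x^2 + \tfrac{b_4}{2}x + \tfrac{b_6}{4}$ with $\omega = dx/(2y)$ — globality holds because once $\omega$ is fixed the only ambiguity in a Weierstrass equation is $x \mapsto x + r$, and this $r$ is determined by requiring that the class of $x\,dx/(2y)$ equal the prescribed $\eta$ (the difference $\eta - x\,dx/(2y)$ lies in $F^1$, hence is $r\omega$ for a unique $r \in \Gamma(T, \mathcal{O}_T)$, and absorbing $r$ into $x$ kills it). This produces functorially a map $T \to \Spec \ZZ[1/2, b_2, b_4, b_6]$; smoothness of $E$ forces $\Delta$ to be invertible, so the map factors through $B_1$. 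One checks this assignment is inverse to $(X_1, b_1)$-pullback, both composites being the identity by the universal property just described, and that it is compatible with base change and with isomorphisms in the fiber categories — here one uses the rigidity of $\mathcal{B}_1$ over $\ZZ[1/2]$ (Proposition \ref{rigid}), which guarantees there are no nontrivial automorphisms to track, so that checking the bijection on isomorphism classes suffices.

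The main obstacle I expect is the bookkeeping around two points: verifying that $\langle dx/(2y), x\,dx/(2y)\rangle_{\lambda_E}$ is exactly $+1$ (not merely a unit) under the specific sign conventions inherited from \cite{BBM82} via the remark identifying $\langle\ ,\ \rangle_{\lambda_E}$ with cup product composed with the trace — this requires care with the residue/trace normalization and with orientation of $H^2_{\dR}$ — and confirming that the second-kind differential $x\,dx/(2y)$ genuinely has nonzero de Rham class in every fiber, so that $(\omega_1, \eta_1)$ is a basis rather than merely a pair spanning $F^1$ after projection. Both are classical (the first is essentially the computation behind the Legendre relation / the pairing on the de Rham cohomology of an elliptic curve, the second follows from the explicit description of $H^1_{\dR}$ of a smooth affine plane curve plus its compactification), but threading the exact constants is where the real work lies; everything else is an unwinding of definitions together with the already-established rigidity and representability results.
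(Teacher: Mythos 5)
Your proposal is correct and follows essentially the same route as the paper: verify $\langle\omega_1,\eta_1\rangle_{\lambda}=1$ by a residue computation (the paper reduces to $\CC$ and cites the classical residue formula), then, for any $(E,b)$ over a $\ZZ[1/2]$-scheme, build Zariski-locally a short Weierstrass model adapted to $\omega$, kill the remaining translation ambiguity $x\mapsto x+r$ by matching $\eta$ (using that $\eta-[x\,dx/2y]$ lies in the Lagrangian $F^1=\mathcal{O}\omega$), and conclude by uniqueness of the adapted model. The only cosmetic difference is that you invoke Proposition \ref{rigid} to dispense with automorphisms and check a bijection on isomorphism classes, whereas the paper proves directly the existence and uniqueness of the classifying morphism $(E,b)_{/U}\to(X_1,b_1)_{/B_1}$, which subsumes that rigidity.
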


In other words, if $(X_1,b_1)_{/B_1}$ is defined as above, then for any $\ZZ[1/2]$-scheme $U$, and any elliptic curve $E$ over $U$ endowed with a symplectic-Hodge basis $b$, there exists a unique morphism \linebreak $F_{/f}:E_{/U} \to {X_1}_{/B_1}$  in $\mathcal{A}_{1,\ZZ[1/2]}$ such that $F^*b_1=b$.

\begin{proof}
It is classical that $\omega_1$ so defined is in $F^1(X_1/B_1)$. To prove that $\langle \omega_1,\eta_1\rangle_{\lambda_E} = 1$ one can, for instance,  use the compatibility with base change to reduce this statement to an analogous statement concerning an elliptic curve over $\CC$, and then apply the classical residue formula (cf. \cite{DMOS82} pp. 23-25). 

Let $U$ be a $\ZZ[1/2]$-scheme and $(E,b)_{/U}$ be an object of $\mathcal{B}_{1}(U)$, with $b=(\omega,\eta)$. It is sufficient to prove that, locally for the Zariski topology over $U$, there exists a unique morphism $(E,b)_{/U} \to (X_1,b_1)_{/B_1}$ in $\mathcal{B}_{1,\ZZ[1/2]}$.

 We follow essentially the same steps in \cite{KM85} 2.2 to find a Weierstrass equation for an elliptic curve. Let us denote by $O: U \to E$ the identity section of the elliptic curve $E$ over $U$ and by $p:E \to U$ its structural morphism. Locally for the Zariski topology on $U$ we can find a formal parameter $t$ in the neighborhood of $O$ such that $\omega$ has a formal expansion in $t$ of the form
\begin{align*}
\omega = (1+ O(t))dt\text{,}
\end{align*} 
where $O(t)$ stands for a formal power series in $t$ of order $\ge 1$. Up to replacing $U$ by an open subscheme, we can and shall assume from now on that $t$ exists globally over $U$. 

There exist bases $(1,x)$ of $p_*\mathcal{O}_E(2 [O])$, and $(1,x,y)$ of $p_*\mathcal{O}_E(3[ O])$, such that
\begin{align} 
x = \frac{1}{t^2}(1+ O(t))\ \  \text{ and }\ \ y=\frac{1}{t^3}(1+O(t))\text{.}
\end{align}
Then the rational functions $x$ and $y$ necessarily satisfy an equation of the form
\begin{align*}
y^2 + a_1xy +a_3y= x^3 + a_2x^2 + a_4x+a_6\text{,}
\end{align*}
where $a_i$ are uniquely defined global sections of $\mathcal{O}_U$. Since 2 is invertible in $U$, the above equation is equivalent to
\begin{align*}
\left(y+\frac{a_1}{2}x + \frac{a_3}{2}\right)^2 = x^3 + \left(\frac{a_1^2 + 4 a_2}{4}\right)x^2 + \left(\frac{a_1a_3+2a_4}{2} \right)x + \frac{a_3^2 + 4 a_6}{4}\text{.}
\end{align*}
Therefore, after the change of coordinates $(x,y)\mapsto (x,y + \frac{a_1}{2}x + \frac{a_3}{2})$, we can assume that $x$ and $y$ satisfy
\begin{align*}
y^2 = x^3 + \frac{b_2}{4}x^2 + \frac{b_4}{2}x + \frac{b_6}{4}\text{,}
\end{align*}
where $b_i$ are global sections of $\mathcal{O}_U$. Put differently, we obtain a morphism $F_{/f} : E_{/U} \to {X_1}_{/B_1}$ in $\mathcal{A}_{1,\ZZ[1/2]}$.

By considering formal expansions in $t$, we see that $F^*\omega_1 = \omega$. In particular, 
\begin{align*}
(\omega, F^*\eta_1) = F^*b_1
\end{align*}
is a symplectic-Hodge basis of $E_{/U}$, and there exists a section $s$ of $\mathcal{O}_U$ such that $\eta = F^*\eta_1 + s\omega$. Thus, after the change of coordinates $(x,y)\mapsto (x+s,y)$, we have $F^*b_1 = b$. Therefore, we have constructed a morphism $F_{/f}:(E,b)_{/U} \to (X_1,b_1)_{/B_1}$ in $\mathcal{B}_{1,\ZZ[1/2]}$. 

We now prove that the morphism $F_{/f}$ is unique. Let $F'_{/f'} : (E,b)_{/U} \to (X_1,b_1)_{/B_1}$ be any morphism in $\mathcal{B}_{1,\ZZ[1/2]}$. If $f'=(b_2',b_4',b_6')$ are the coordinates of $f'$, then $F'$ is given by a basis $(1,x',y')$ of $p_*\mathcal{O}_E(3[ O])$ satisfying
\begin{align*} \tag{$*$}
(y')^2 = (x')^3 + \frac{b_2'}{4}(x')^2 + \frac{b_4'}{2}x' + \frac{b_6'}{4}\text{.}
\end{align*} 
As both $(1,x,y)$ and $(1,x',y')$ (resp. $(1,x)$ and $(1,x')$) are a basis of $p_*\mathcal{O}_E(3[ O])$ (resp. $p_*\mathcal{O}_E(2[O])$), then there exists global sections $c_1,c_2,c_3$ of $\mathcal{O}_{U}$ (resp. $u,v$ of $\mathcal{O}_U^{\times}$) such that
\begin{align*}
x' &= u(x+c_1)\\
y' &= v(y + c_2x+c_3)\text{.} 
\end{align*}
Note that equation $(*)$ implies that $u^3 = v^2$.

Now, as $(F')^*\omega_1 = F^*\omega_1$, we obtain
\begin{align*}
\frac{dx'}{2y'} = \frac{dx}{2y} \iff \frac{u}{v}\frac{dx}{2(y+c_2x+c_3)} = \frac{dx}{2y}\text{,}
\end{align*}
thus $c_2x+c_3=0$ and $u = v$. Since $u^3=v^2$, we obtain $u=v=1$ and $(x',y')=(x+c_1,y)$. Finally, as $(F')^*\eta_1=F^*\eta_1$, we have
\begin{align*}
x'\frac{dx'}{2y'} = \frac{dx}{2y} \iff  x\frac{dx}{2y} + c_1\frac{dx}{2y}= x\frac{dx}{2y}\text{,}
\end{align*}
hence $c_1=0$. Thus $(x',y')=(x,y)$ and this also implies that $f=f'$.
\end{proof}

\begin{obs} \label{classique1}
By considering the change of variables
\begin{align*}
\left\{\begin{array}{l}
         b_2 = e_2 \\
         b_4 = (e_2^2-e_4)/24\\
         b_6 = (4e_2^3 - 12 e_2e_4 + 8e_6)/1728
         \end{array}\right.\iff
\left\{\begin{array}{l}
         e_2 = b_2 \\
         e_4= b_2^2-24b_4\\
         e_6 = b_2^3 -36b_2b_4 + 216b_6
         \end{array}\right.
\end{align*}
we see that $B_1 \tensor_{\ZZ[1/2]}\ZZ[1/6]$ is isomorphic to
\begin{align*}
\Spec \ZZ[1/6, e_2,e_4,e_6,(e_4^3-e_6^2)^{-1}]\text{.}
\end{align*}
Under this identification, the universal elliptic curve $X_1$ is given by the equation
\begin{align*}
y^2 = 4\left(x+\frac{e_2}{12} \right)^3 -\frac{e_4}{12}\left(x+\frac{e_2}{12} \right) + \frac{e_6}{216}\text{,}
\end{align*}
and the universal symplectic-Hodge basis $b_1$ by $(dx/y,xdx/y)$. 
\end{obs}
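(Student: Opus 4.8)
The statement is a direct computation, and the plan is to verify the algebraic part (the isomorphism of base schemes) and the geometric part (the equation of $X_1$ and the shape of the universal basis $b_1$) by explicit substitution.

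First I would handle the change of variables. Over $\ZZ[1/6]$ one has $\ZZ[1/6,b_2,b_4,b_6]=\ZZ[1/6,e_2,e_4,e_6]$: the displayed map $(b_2,b_4,b_6)\mapsto(e_2,e_4,e_6)$ is triangular in the sense that $e_2=b_2$, that $e_4=b_2^2-24b_4$ is affine-linear in $b_4$ with leading coefficient $-24$, and that $e_6=b_2^3-36b_2b_4+216b_6$ is affine-linear in $b_6$ with leading coefficient $216$; since $24$ and $216$ are units in $\ZZ[1/6]$, this is a polynomial automorphism whose inverse is obtained by back-substitution and is exactly the inverse formulas displayed in the statement. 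It then remains to match the localizing elements: substituting the $b_i$ in terms of the $e_j$ into the expression for $\Delta$ in Theorem~\ref{casg=1} --- or, equivalently, using $\Delta=16\,\mathrm{disc}\bigl(x^3+\tfrac{b_2}{4}x^2+\tfrac{b_4}{2}x+\tfrac{b_6}{4}\bigr)$ together with a completion of the cube --- gives $\Delta=(e_4^3-e_6^2)/1728$, and $1728=2^6 3^3$ is invertible in $\ZZ[1/6]$. Hence $\ZZ[1/6,b_2,b_4,b_6,\Delta^{-1}]=\ZZ[1/6,e_2,e_4,e_6,(e_4^3-e_6^2)^{-1}]$, which is the asserted isomorphism $B_1\tensor_{\ZZ[1/2]}\ZZ[1/6]\cong\Spec\ZZ[1/6,e_2,e_4,e_6,(e_4^3-e_6^2)^{-1}]$; it is visibly $\ZZ[1/6]$-linear, hence an isomorphism of schemes over $\Spec\ZZ[1/6]$.

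Next I would transport the universal object. I would apply to the Weierstrass equation $y^2=x^3+\tfrac{b_2}{4}x^2+\tfrac{b_4}{2}x+\tfrac{b_6}{4}$ of Theorem~\ref{casg=1} the isomorphism of elliptic curves $(x,y)\mapsto(x,2y)$ and substitute the $b_i$ in terms of the $e_j$: the equation becomes $y^2=4x^3+e_2x^2+\tfrac{e_2^2-e_4}{12}x+\tfrac{4e_2^3-12e_2e_4+8e_6}{1728}$, and extracting the cube $\bigl(x+\tfrac{e_2}{12}\bigr)^3$ rewrites the right-hand side as $4\bigl(x+\tfrac{e_2}{12}\bigr)^3-\tfrac{e_4}{12}\bigl(x+\tfrac{e_2}{12}\bigr)+\tfrac{e_6}{216}$, which is exactly the equation in the statement. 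Under the same isomorphism the sections $dx/y$ and $x\,dx/y$ of the new model pull back to $\omega_1=\tfrac{dx}{2y}$ and $\eta_1=x\,\tfrac{dx}{2y}$ respectively, so the universal symplectic-Hodge basis $b_1$ is $(dx/y,x\,dx/y)$ in the new coordinates, as claimed.

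There is no conceptual obstacle: everything reduces to polynomial identities over $\ZZ[1/6]$. The only point requiring genuine care is the bookkeeping of the normalizing constants --- in particular, confirming that the $\Delta$ of Theorem~\ref{casg=1} (normalized there as $16\,\mathrm{disc}$) reduces to $e_4^3-e_6^2$ up to a unit of $\ZZ[1/6]$, and that the various factors $2$, $24$, $216$, $1728$ and the scalings $\tfrac{dx}{2y}$ versus $\tfrac{dx}{y}$ remain consistent throughout. One may also short-circuit this verification by invoking the classical identity $\Delta=(E_4^3-E_6^2)/1728$ for the standard Weierstrass model attached to the Eisenstein series, which is precisely the normalization adopted in the statement.
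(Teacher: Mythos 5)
Your proposal is correct and follows essentially the route the paper intends: the Remark is proved by exactly the change of variables you describe, combined with the isomorphism $(x,y)\mapsto(x,2y)$ over $\varphi(b_2,b_4,b_6)=(b_2,\,b_2^2-24b_4,\,b_2^3-36b_2b_4+216b_6)$, which is precisely the morphism $\Phi_{/\varphi}$ the paper records in \ref{GMunivell}. Your bookkeeping also checks out, including $\Delta=(e_4^3-e_6^2)/1728$ with $1728$ a unit in $\ZZ[1/6]$.
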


\subsection{Explicit formula for the Ramanujan vector field} 

It is also possible to give an explicit formula for the Ramanujan vector field $v_{11}$ over $B_1$. Indeed, consider the global section of $T_{B_1/\ZZ[1/2]}$ given by\begin{align*}
v \defeq 2b_4\frac{\partial}{\partial b_2} + 3b_6\frac{\partial}{\partial b_4} + (b_2b_6 - b_4^2)\frac{\partial}{\partial b_6}\text{.}
\end{align*}
One may easily verify using the expression for the Gauss-Manin connection on $H^1_{\dR}(X_1/B_1)$ given in \ref{GMunivell} that
\begin{align*}
\nabla_v\left(\begin{array}{cc}
       \omega_1 & \eta_1
       \end{array}\right) =
       \left(\begin{array}{cc}
       \omega_1 & \eta_1
       \end{array}\right)\left(\begin{array}{cc}
       0 & 0\\
       1 & 0
       \end{array}\right)
\end{align*}
By Proposition \ref{caracchamps}, $v$ is the Ramanujan vector field $v_{11}$ over $B_1$.

\begin{obs} \label{explanation}
Under the isomorphism $B_1\tensor_{\ZZ[1/2]} \ZZ[1/6] \cong \ZZ[1/6,e_2,e_4,e_6,(e_4^3 - e_6^2)^{-1}]$ of Remark \ref{classique1}, $v$ gets identified with the vector field associated to the classical Ramanujan equations:
\begin{align*}
v = \frac{e_2^2-e_4}{12} \frac{\partial}{\partial e_2} + \frac{e_2e_4-e_6}{3}\frac{\partial}{\partial e_4} + \frac{e_2e_6-e_4^2}{2}\frac{\partial}{\partial e_6}\text{.}
\end{align*}
\end{obs}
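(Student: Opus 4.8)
The plan is to reduce the assertion to an elementary change-of-variables computation, since both vector fields in question are given by completely explicit formulas. Write $\psi$ for the isomorphism $\Spec \ZZ[1/6,e_2,e_4,e_6,(e_4^3-e_6^2)^{-1}] \stackrel{\sim}{\to} B_1\tensor_{\ZZ[1/2]}\ZZ[1/6]$ of Remark \ref{classique1}. A derivation of the source ring is transported by $\psi$ to the derivation of the target ring whose values on the generators $e_2,e_4,e_6$ are obtained by applying $v$ to the polynomial expressions
\begin{align*}
e_2 = b_2,\qquad e_4 = b_2^2 - 24 b_4,\qquad e_6 = b_2^3 - 36 b_2 b_4 + 216 b_6
\end{align*}
for those generators (these are exactly the inverse substitutions displayed in Remark \ref{classique1}; note that they are polynomial with integer coefficients, so the computation already makes sense in $\ZZ[1/2,b_2,b_4,b_6]$ and all intermediate quantities stay integral). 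Hence it will be enough to verify the three scalar identities
\begin{align*}
v(e_2) = \frac{e_2^2-e_4}{12},\qquad v(e_4) = \frac{e_2 e_4 - e_6}{3},\qquad v(e_6) = \frac{e_2 e_6 - e_4^2}{2}
\end{align*}
inside $\ZZ[1/6,b_2,b_4,b_6]$.

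I would then carry out these three checks via the Leibniz rule, using the defining formula $v = 2b_4\frac{\partial}{\partial b_2} + 3b_6\frac{\partial}{\partial b_4} + (b_2b_6-b_4^2)\frac{\partial}{\partial b_6}$. For instance, $v(e_2) = 2b_4$, while $\frac{e_2^2-e_4}{12} = \frac{b_2^2-(b_2^2-24b_4)}{12} = 2b_4$; one computes $v(e_4) = 4b_2b_4 - 72b_6$ and $e_2 e_4 - e_6 = 12 b_2 b_4 - 216 b_6$, whose third is the same; and $v(e_6) = 6b_2^2 b_4 + 108 b_2 b_6 - 288 b_4^2$, which agrees with $\tfrac12(e_2 e_6 - e_4^2)$ since $e_2 e_6 - e_4^2 = 12 b_2^2 b_4 + 216 b_2 b_6 - 576 b_4^2$. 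All three are routine polynomial identities, and they occupy the bulk of the argument.

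There is no genuine obstacle here: the statement is essentially a bookkeeping computation. The only point requiring a little care is to work with the polynomial expressions for $e_2,e_4,e_6$ in terms of $b_2,b_4,b_6$ rather than the fractional ones, so that everything takes place in a polynomial ring. One should also record the trivial observation that $e_2,e_4,e_6$ together with $(e_4^3-e_6^2)^{-1}$ — on which the transported derivation is then forced by the Leibniz rule — generate the coordinate ring of $B_1\tensor_{\ZZ[1/2]}\ZZ[1/6]$, so that the three identities above do determine the image of $v$ under $\psi$, completing the proof.
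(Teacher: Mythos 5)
Your proposal is correct: the paper states this as a remark without proof, and the intended verification is precisely the change-of-variables computation you carry out, transporting the derivation $v$ through the polynomial substitutions $e_2=b_2$, $e_4=b_2^2-24b_4$, $e_6=b_2^3-36b_2b_4+216b_6$ and checking the three identities (your algebra checks out, e.g. $v(e_6)=6b_2^2b_4+108b_2b_6-288b_4^2=\tfrac12(e_2e_6-e_4^2)$). Your closing observation that the generators $e_2,e_4,e_6$ together with the localization at $e_4^3-e_6^2$ determine the transported derivation is the right (if routine) justification that the three identities suffice.
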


\begin{appendices}

\section{Symplectic vector bundles} \label{sympl}

Fix once and for all a scheme $U$.

\subsection{Symplectic vector bundles}

 Let $\mathcal{E}$ a vector bundle over $U$. An $\mathcal{O}_U$-bilinear map
\begin{align*}
\langle \ , \ \rangle : \mathcal{E} \times \mathcal{E} \longrightarrow \mathcal{O}_U
\end{align*}
is said to be
\begin{enumerate}
    \item \emph{non-degenerate} if the $\mathcal{O}_U$-morphism $e \mapsto \langle \ \ , e \rangle$ from $\mathcal{E}$ to $\mathcal{E}^{\vee}$ is an isomorphism,
    \item \emph{alternating} if $\langle e, e \rangle =0$ for every section $e$ of $\mathcal{E}$.
\end{enumerate} 

\begin{defi} \label{defisymplbundle}
A \emph{symplectic form} over $\mathcal{E}$ is a non-degenerate and alternating $\mathcal{O}_U$-bilinear form over $\mathcal{E}$. A \emph{symplectic vector bundle} over $U$ is a couple $(\mathcal{E},\langle \ , \ \rangle)$, where $\mathcal{E}$ is a vector bundle over $U$ and $\langle \ , \ \rangle$ is a symplectic form over $\mathcal{E}$.
\end{defi}


\subsection{Lagrangian subbundles}

 Let $(\mathcal{E}, \langle \ , \ \rangle)$ be a symplectic vector bundle over $U$ and $\mathcal{F}$ be a subbundle of $\mathcal{E}$. We denote by $\mathcal{F}^{\langle \ , \ \rangle}$ the subsheaf of $\mathcal{E}$ consisting of those sections $e$ of $\mathcal{E}$ such that $\langle f , e \rangle = 0$ for every section $f$ of $\mathcal{F}$.

\begin{lemma} \label{exactseq}
We have an exact sequence of $\mathcal{O}_U$-modules
\begin{align*}
0 \longrightarrow  \mathcal{F}^{\langle \ , \ \rangle} \longrightarrow \mathcal{E} &\longrightarrow \mathcal{F}^{\vee} \longrightarrow 0\\
e &\mapsto\langle \ \ , e \rangle|_{\mathcal{F}}
\end{align*}
In particular, $\mathcal{F}^{\langle \ , \ \rangle}$ is a subbundle of $\mathcal{E}$ of rank ${\rm rank}( \mathcal{E}) - {\rm rank}(\mathcal{F})$.
\end{lemma}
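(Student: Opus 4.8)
The plan is to factor the map in the statement through the non-degeneracy isomorphism, reducing the whole lemma to the elementary fact that the transpose of a locally split injection of vector bundles is a locally split surjection; since every assertion is local on $U$, I would work Zariski-locally throughout. Concretely, I would observe that the $\mathcal{O}_U$-linear map $\Phi\colon\mathcal{E}\to\mathcal{F}^{\vee}$, $e\mapsto\langle\ ,e\rangle|_{\mathcal{F}}$, factors as $\mathcal{E}\stackrel{\sim}{\to}\mathcal{E}^{\vee}\stackrel{\iota^{\vee}}{\to}\mathcal{F}^{\vee}$, where the first arrow $e\mapsto\langle\ ,e\rangle$ is an isomorphism because $\langle\ ,\ \rangle$ is non-degenerate (Definition \ref{defisymplbundle}), and $\iota^{\vee}$ is the transpose of the inclusion $\iota\colon\mathcal{F}\hookrightarrow\mathcal{E}$. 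As $\mathcal{F}$ is a subbundle of $\mathcal{E}$, the inclusion $\iota$ admits, locally on $U$, a retraction $r\colon\mathcal{E}\to\mathcal{F}$ with $r\circ\iota=\id_{\mathcal{F}}$; then $r^{\vee}$ is a local section of $\iota^{\vee}$, so $\iota^{\vee}$, and hence $\Phi$, is a locally split epimorphism of $\mathcal{O}_U$-modules.

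Next I would identify $\ker\Phi$ with $\mathcal{F}^{\langle\ ,\ \rangle}$: testing the vanishing of $\langle\ ,e\rangle|_{\mathcal{F}}\in\mathcal{F}^{\vee}$ on a local basis of $\mathcal{F}$ shows that a local section $e$ of $\mathcal{E}$ lies in $\ker\Phi$ if and only if $\langle f,e\rangle=0$ for every section $f$ of $\mathcal{F}$, which is exactly the defining condition for $\mathcal{F}^{\langle\ ,\ \rangle}$. Thus the sequence $0\to\mathcal{F}^{\langle\ ,\ \rangle}\to\mathcal{E}\stackrel{\Phi}{\to}\mathcal{F}^{\vee}\to0$ is exact, and being locally split with locally free terms it exhibits $\mathcal{F}^{\langle\ ,\ \rangle}$ as a local direct factor of $\mathcal{E}$, i.e. a subbundle. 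Comparing ranks (a locally constant function on $U$) in a local splitting then gives $\rank(\mathcal{F}^{\langle\ ,\ \rangle})=\rank(\mathcal{E})-\rank(\mathcal{F}^{\vee})=\rank(\mathcal{E})-\rank(\mathcal{F})$, since dualizing preserves rank.

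I do not expect any real obstacle here; the only points that demand a little care are the sheaf-theoretic reading of ``for every section $f$ of $\mathcal{F}$'' when computing $\ker\Phi$ (so that one genuinely recovers the subsheaf $\mathcal{F}^{\langle\ ,\ \rangle}$, not merely its sections over a fixed open), and the systematic passage to local splittings, since neither being a subbundle nor the precise value of the rank is a priori a global statement.
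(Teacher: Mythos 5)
Your proposal is correct and follows essentially the same route as the paper: the paper likewise proves surjectivity locally by extending functionals from $\mathcal{F}$ to $\mathcal{E}$ (your local retraction $r$, i.e.\ the transpose $\iota^{\vee}$ being locally split) and then invoking the non-degeneracy isomorphism $\mathcal{E}\stackrel{\sim}{\to}\mathcal{E}^{\vee}$, with left-exactness holding by definition of $\mathcal{F}^{\langle\ ,\ \rangle}$. Your explicit treatment of the kernel identification and of the rank count merely spells out what the paper leaves implicit.
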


\begin{proof}
The sequence $0 \longrightarrow  \mathcal{F}^{\langle \ , \ \rangle} \longrightarrow \mathcal{E} \longrightarrow \mathcal{F}^{\vee}$ is exact by definition. To see that $\mathcal{E} \to \mathcal{F}^{\vee}$ defined above is surjective, one may work locally and remark that in this case $\mathcal{F}$ is a direct factor of $\mathcal{E}$, and thus any $\mathcal{O}_U$-linear functional on $\mathcal{F}$ can be extended to $\mathcal{E}$; then one applies the non-degeneracy of the bilinear form $\langle \ , \ \rangle$.
\end{proof}

\begin{defi} \label{isolagr}
A subbundle $\mathcal{F}$ of $\mathcal{E}$ is said to be \emph{isotropic} with respect to $\langle \ , \ \rangle$ if $\mathcal{F} \subset \mathcal{F}^{\langle \ , \ \rangle}$. An isotropic subbundle of $\mathcal{E}$ such that $\mathcal{F}=\mathcal{F}^{\langle \ , \ \rangle}$ is said to be a \emph{Lagrangian subbundle}.
\end{defi}

The next result easily follows from Lemma \ref{exactseq}.

\begin{coro} \label{corosympl}
Let $\mathcal{F}$ be an isotropic subbundle of $\mathcal{E}$. Then $2 \rank( \mathcal{F}) \le \rank( \mathcal{E})$. Moreover, $\mathcal{F}$ is Lagrangian if and only if $2 \rank(\mathcal{F})=\rank(\mathcal{E})$.
\end{coro}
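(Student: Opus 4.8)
The plan is to deduce both assertions directly from the exact sequence of Lemma \ref{exactseq}, via the following elementary observation: if $\mathcal{G} \subseteq \mathcal{G}'$ are two subbundles of a vector bundle $\mathcal{E}$, then $\rank(\mathcal{G}) \le \rank(\mathcal{G}')$ locally on $U$, with equality if and only if $\mathcal{G} = \mathcal{G}'$.

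First I would establish this observation. Since $\mathcal{G}$ and $\mathcal{G}'$ are subbundles of $\mathcal{E}$, the quotients $\mathcal{E}/\mathcal{G}'$ and $\mathcal{E}/\mathcal{G}$ are vector bundles; from the exact sequence $0 \to \mathcal{G}'/\mathcal{G} \to \mathcal{E}/\mathcal{G} \to \mathcal{E}/\mathcal{G}' \to 0$, whose right-hand term is locally free, we get that this sequence is locally split, so $\mathcal{G}'/\mathcal{G}$ is a local direct factor of $\mathcal{E}/\mathcal{G}$, hence itself a vector bundle, and $\rank(\mathcal{G}') = \rank(\mathcal{G}) + \rank(\mathcal{G}'/\mathcal{G})$ locally on $U$. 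This yields the inequality, and equality forces $\mathcal{G}'/\mathcal{G} = 0$, that is, $\mathcal{G} = \mathcal{G}'$.

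Now I would apply this with $\mathcal{G} = \mathcal{F}$ and $\mathcal{G}' = \mathcal{F}^{\langle \ , \ \rangle}$: by hypothesis $\mathcal{F}$ is isotropic, so $\mathcal{F} \subseteq \mathcal{F}^{\langle \ , \ \rangle}$ (Definition \ref{isolagr}), and by Lemma \ref{exactseq} the subbundle $\mathcal{F}^{\langle \ , \ \rangle}$ has rank $\rank(\mathcal{E}) - \rank(\mathcal{F})$. The observation then gives $\rank(\mathcal{F}) \le \rank(\mathcal{E}) - \rank(\mathcal{F})$, i.e. $2\rank(\mathcal{F}) \le \rank(\mathcal{E})$; and $2\rank(\mathcal{F}) = \rank(\mathcal{E})$ holds if and only if $\rank(\mathcal{F}) = \rank(\mathcal{F}^{\langle \ , \ \rangle})$, which by the observation is equivalent to $\mathcal{F} = \mathcal{F}^{\langle \ , \ \rangle}$, i.e. to $\mathcal{F}$ being Lagrangian.

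I do not expect any genuine obstacle here; the only step needing a word of care is the local splitting invoked in the elementary observation, which can alternatively be checked fibrewise over the points of $U$ (all the ranks in question being locally constant functions, so that the statement is to be read on each connected component of $U$).
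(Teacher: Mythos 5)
Your argument is correct and is exactly the route the paper intends: the paper gives no details, stating only that the corollary ``easily follows from Lemma \ref{exactseq}'', and your proof fills this in by comparing the ranks of the nested subbundles $\mathcal{F}\subseteq\mathcal{F}^{\langle \ , \ \rangle}$, the latter having rank $\rank(\mathcal{E})-\rank(\mathcal{F})$ by that lemma. The auxiliary observation on ranks of nested subbundles, including the equality case via local splitness of $0\to\mathcal{G}'/\mathcal{G}\to\mathcal{E}/\mathcal{G}\to\mathcal{E}/\mathcal{G}'\to 0$, is sound.
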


The next lemma shows that Lagrangian subbundles exist locally for the Zariski topology over $U$. This implies in particular that the rank of every symplectic vector bundle is even.

\begin{lemma} \label{locallagrangian}
Let $(\mathcal{E},\langle \ , \ \rangle)$ be a symplectic vector bundle over $U$ and assume that $U=\Spec R$, where $R$ is a local ring. Then there exists a Lagrangian subbundle of $\mathcal{E}$.
\end{lemma}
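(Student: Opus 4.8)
The plan is to prove the lemma by induction on the rank $n$ of $\mathcal{E}$, splitting off a hyperbolic plane at each step; this simultaneously reproves that $n$ must be even. Since $R$ is local, $\mathcal{E}$ is free, say with basis $e_1,\dots,e_n$. The base case $n=0$ is trivial (take the zero subbundle), and $n=1$ does not occur, since an alternating form on a free module of rank one vanishes and is therefore degenerate.

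For the inductive step, assume $n\ge 2$. First I would produce sections $e,f$ of $\mathcal{E}$ with $\langle e,f\rangle=1$. The Gram matrix $M=(\langle e_i,e_j\rangle)_{1\le i,j\le n}$ is invertible over $R$, being the matrix of the isomorphism $\mathcal{E}\stackrel{\sim}{\to}\mathcal{E}^{\vee}$, $e\mapsto\langle \ \ , e\rangle$, expressed in the basis $(e_i)$ and its dual basis. Because $R$ is local, the reduction of $M$ modulo the maximal ideal $\mm$ is invertible over the residue field, so its first row is nonzero; hence some entry $\langle e_1,e_j\rangle$ of the first row of $M$ is a unit of $R$, necessarily with $j\ne 1$ since $\langle e_1,e_1\rangle=0$. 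Put $e:=e_1$ and $f:=\langle e_1,e_j\rangle^{-1}e_j$, so that $\langle e,f\rangle=1$.

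Next I would split off the plane $\mathcal{H}:=\mathcal{O}_U e\oplus\mathcal{O}_U f$. It is a rank-two subbundle of $\mathcal{E}$ (generated by two members of a basis, one of them rescaled by a unit), and the identities $\langle e,e\rangle=\langle f,f\rangle=0$, $\langle e,f\rangle=1$ show that the restriction $\langle \ , \ \rangle|_{\mathcal{H}}$ is non-degenerate. Applying Lemma~\ref{exactseq} with $\mathcal{F}=\mathcal{H}$, the composite $\mathcal{H}\hookrightarrow\mathcal{E}\to\mathcal{H}^{\vee}$ is exactly the non-degeneracy isomorphism of $\langle \ , \ \rangle|_{\mathcal{H}}$, so the exact sequence $0\to\mathcal{H}^{\langle \ , \ \rangle}\to\mathcal{E}\to\mathcal{H}^{\vee}\to 0$ splits with $\mathcal{H}$ as a complement: $\mathcal{E}=\mathcal{H}\oplus\mathcal{H}^{\langle \ , \ \rangle}$. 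Consequently $\mathcal{H}^{\langle \ , \ \rangle}$, equipped with the restricted form, is a symplectic vector bundle over $U$ of rank $n-2$, and the induction hypothesis gives it a Lagrangian subbundle $\mathcal{L}'$.

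Finally I would verify that $\mathcal{L}:=\mathcal{O}_U e\oplus\mathcal{L}'$ is Lagrangian in $\mathcal{E}$. Writing $\mathcal{H}^{\langle \ , \ \rangle}=\mathcal{L}'\oplus\mathcal{L}''$ yields $\mathcal{E}=\mathcal{O}_U e\oplus\mathcal{O}_U f\oplus\mathcal{L}'\oplus\mathcal{L}''$, so $\mathcal{L}$ is a subbundle; it is isotropic because $\langle e,e\rangle=0$, because $e\in\mathcal{H}$ pairs trivially with $\mathcal{L}'\subset\mathcal{H}^{\langle \ , \ \rangle}$, and because $\mathcal{L}'$ is isotropic in $\mathcal{H}^{\langle \ , \ \rangle}$ hence in $\mathcal{E}$; and its rank is $1+\tfrac{n-2}{2}=\tfrac{n}{2}$, so Corollary~\ref{corosympl} shows it is Lagrangian. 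The main (indeed essentially the only) subtlety is the step extracting a unit from the first row of the Gram matrix, which is precisely where local-ness of $R$ is used; the remainder is formal bookkeeping with the exact sequence of Lemma~\ref{exactseq}.
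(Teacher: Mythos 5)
Your argument is correct, but it takes a genuinely different route from the paper. The paper's proof is a maximality argument ``from the inside'': it takes a maximal isotropic subbundle $\mathcal{F}$ and shows it must be Lagrangian, because if not, then over the local ring $R$ the inclusion $\mathcal{F}\subset\mathcal{F}^{\langle \ , \ \rangle}$ splits as $\mathcal{F}^{\langle \ , \ \rangle}=\mathcal{F}\oplus\mathcal{O}_Ue_1\oplus\cdots\oplus\mathcal{O}_Ue_k$ with $k\ge 1$ (both are subbundles by Lemma~\ref{exactseq}), and $\mathcal{F}\oplus\mathcal{O}_Ue_1$ is a strictly larger isotropic subbundle; locality is used only to produce this splitting. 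You instead induct on the rank and split off a hyperbolic plane $\mathcal{H}=\mathcal{O}_Ue\oplus\mathcal{O}_Uf$, using locality twice: to make $\mathcal{E}$ free and to extract a unit from the Gram matrix (which is indeed the matrix of the non-degeneracy isomorphism $\mathcal{E}\stackrel{\sim}{\to}\mathcal{E}^{\vee}$, so it is invertible and some entry of its first row is a unit). Your use of Lemma~\ref{exactseq} with $\mathcal{F}=\mathcal{H}$ to get $\mathcal{E}=\mathcal{H}\oplus\mathcal{H}^{\langle \ , \ \rangle}$ is correct; the only step you pass over silently is why the restricted form on $\mathcal{H}^{\langle \ , \ \rangle}$ is again \emph{non-degenerate} in the sense of the paper (an isomorphism onto the dual, not merely injective) --- this follows, e.g., because the two summands are mutually orthogonal, so the Gram matrix of $\mathcal{E}$ is block diagonal with invertible total determinant and invertible $\mathcal{H}$-block, forcing the $\mathcal{H}^{\langle \ , \ \rangle}$-block to be invertible; it deserves a sentence but is not a gap. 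In exchange for being slightly longer, your approach yields more than the paper's: it reproves that the rank of a symplectic vector bundle over a local base is even, it produces a full orthogonal decomposition into hyperbolic planes, and it comes close to directly constructing a symplectic basis, i.e., to reproving Proposition~\ref{exisunic}(1) over a local ring, whereas the paper's maximal-isotropic argument is shorter and avoids any computation with Gram matrices.
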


\begin{proof}
Let $S$ be the set of isotropic subbundles of $\mathcal{E}$ ordered by inclusion. It is sufficient to prove that every maximal element in $S$ is a Lagrangian (maximal elements always exist; consider the rank, for instance).

We proceed by contraposition. Let $\mathcal{F}$ be an element of $S$ that is not a Lagrangian. As $R$ is local and both $\mathcal{F}$ and $\mathcal{F}^{\langle \ , \ \rangle}$ are subbundles $\mathcal{E}$ (cf. Lemma \ref{exactseq}), there exists an integer $k\ge 1$ and global sections $e_1,\ldots,e_k$ of $\mathcal{F}^{\langle \ , \ \rangle}$ such that
\begin{align*}
\mathcal{F}^{\langle \ , \ \rangle} = \mathcal{F} \oplus \mathcal{O}_Ue_1 \oplus \cdots \oplus \mathcal{O}_Ue_k\text{.}
\end{align*}
 In particular, $\mathcal{F} \oplus \mathcal{O}_Ue_1$ is an element of $S$ strictly containing $\mathcal{F}$; thus, $\mathcal{F}$ is not maximal.
\end{proof}

\subsection{Symplectic bases} 

Let $(\mathcal{E}, \langle \ , \ \rangle)$ be a symplectic vector bundle of constant rank $2n$ over $U$.

\begin{defi} \label{defsymplbasis}
A \emph{symplectic basis} of $(\mathcal{E},\langle \ , \ \rangle)$ over $U$ is a basis of $\mathcal{E}$ over $U$ of the form $(e_1,\ldots,e_n,f_1,\ldots,f_n)$ with $\langle e_i, e_j \rangle = \langle f_i, f_j \rangle =0$ and $\langle e_i,f_j\rangle = \delta_{ij}$ for all $1\le i,j\le n$.
\end{defi}

 As Lagrangian subbundles exist locally by Lemma \ref{locallagrangian}, the next proposition implies in particular that symplectic bases also exist locally.

\begin{prop} \label{exisunic}
Let $U$ be an affine scheme, $(\mathcal{E},\langle \ , \ \rangle)$ be a symplectic vector bundle over $U$, and $\mathcal{L}$ be a Lagrangian subbundle of $\mathcal{E}$. Then
\begin{enumerate}
    \item Every basis $(e_1,\ldots,e_n)$ of $\mathcal{L}$ over $U$ can be completed to a symplectic basis $(e_1,\ldots,e_n,f_1,\ldots,f_n)$ of $\mathcal{E}$ over $U$.
    \item If $\mathcal{F}$ is a Lagrangian subbundle of $\mathcal{E}$ such that $\mathcal{L}\oplus \mathcal{F}=\mathcal{E}$, and $(f_1,\ldots,f_n)$ is a basis of $\mathcal{F}$ over $U$, then there exists a unique basis $(e_1,\ldots,e_n)$ of $\mathcal{L}$ over $U$ such that $(e_1,\ldots,e_n,f_1,\ldots,f_n)$ is a symplectic basis of $\mathcal{E}$ over $U$.
\end{enumerate}
\end{prop}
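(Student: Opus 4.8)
\emph{Proof proposal.} The plan is to reduce both statements to the exact sequence of Lemma~\ref{exactseq}, using repeatedly that over the affine scheme $U$ a short exact sequence of vector bundles splits (a vector bundle over an affine scheme being the sheaf attached to a finitely generated projective module, so that the quotient is projective).

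For part (1): since $\mathcal{L}$ is Lagrangian one has $\mathcal{L}^{\langle \ , \ \rangle}=\mathcal{L}$, so Lemma~\ref{exactseq} yields a short exact sequence $0\to\mathcal{L}\to\mathcal{E}\to\mathcal{L}^{\vee}\to 0$, which splits over $U$. Let $(e_1^{\vee},\ldots,e_n^{\vee})$ be the basis of $\mathcal{L}^{\vee}$ dual to $(e_1,\ldots,e_n)$, choose a section of the surjection, and let $g_i$ be the image of $e_i^{\vee}$. Then $(e_1,\ldots,e_n,g_1,\ldots,g_n)$ is a basis of $\mathcal{E}$, and from the explicit description of the map in Lemma~\ref{exactseq} one gets $\langle e_j,g_i\rangle=\delta_{ij}$. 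The $g_i$ need not be pairwise orthogonal, so I will replace them by $f_i\defeq g_i-\sum_k c_{ik}e_k$: writing $a_{ij}\defeq\langle g_i,g_j\rangle$, which is an alternating matrix with entries in $\Gamma(U,\mathcal{O}_U)$ because $\langle \ , \ \rangle$ is alternating, one checks that taking $(c_{ij})$ to be the strictly lower-triangular part of $(a_{ij})$ — a choice that, deliberately, does not require inverting $2$, which matters for the integral applications — gives $\langle f_i,f_j\rangle=a_{ij}+c_{ji}-c_{ij}=0$ while preserving $\langle e_j,f_i\rangle=\delta_{ij}$ and $\langle e_i,e_j\rangle=0$. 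Since $f_i\equiv g_i\pmod{\mathcal{L}}$, the tuple $(e_1,\ldots,e_n,f_1,\ldots,f_n)$ is still a basis of $\mathcal{E}$, hence a symplectic basis in the sense of Definition~\ref{defsymplbasis}.

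For part (2): apply Lemma~\ref{exactseq} to $\mathcal{F}$ (also Lagrangian, so $\mathcal{F}^{\langle \ , \ \rangle}=\mathcal{F}$) to obtain $0\to\mathcal{F}\to\mathcal{E}\to\mathcal{F}^{\vee}\to 0$. Because $\mathcal{E}=\mathcal{L}\oplus\mathcal{F}$ and $\mathcal{F}$ is exactly the kernel of $\mathcal{E}\to\mathcal{F}^{\vee}$, the composite of $\mathcal{L}\hookrightarrow\mathcal{E}$ with this surjection is an isomorphism $\mathcal{L}\stackrel{\sim}{\to}\mathcal{F}^{\vee}$. Transporting $(\pm f_i^{\vee})$ through it, with the sign chosen to match the convention of Lemma~\ref{exactseq}, produces a basis $(e_1,\ldots,e_n)$ of $\mathcal{L}$ characterized by $\langle e_i,f_j\rangle=\delta_{ij}$; together with the isotropy of $\mathcal{L}$ and of $\mathcal{F}$ this says $(e_1,\ldots,e_n,f_1,\ldots,f_n)$ is a symplectic basis, and uniqueness is immediate since the relations $\langle e_i,f_j\rangle=\delta_{ij}$ already determine the $e_i$ via the isomorphism $\mathcal{L}\stackrel{\sim}{\to}\mathcal{F}^{\vee}$.

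I do not expect a genuine obstacle here: the content is elementary linear algebra over a commutative ring. The only points that require attention are (a) invoking the affineness of $U$ precisely at the two places where a sequence of vector bundles must be split, and (b) the bookkeeping of the sign in $\langle e_i,f_j\rangle$ together with the choice of the correction matrix $(c_{ij})$, so that the construction remains valid integrally.
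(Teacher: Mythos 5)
Your proof is correct and follows essentially the same route as the paper: part (1) lifts the dual basis through the surjection of Lemma~\ref{exactseq} using affineness and then corrects by elements of $\mathcal{L}$ via a triangular part of the alternating Gram matrix (so no division by $2$), exactly as in the paper's argument with $A=B-B\transp$. In part (2) you use the isomorphism $\mathcal{L}\stackrel{\sim}{\to}\mathcal{F}^{\vee}$ where the paper uses $\mathcal{F}\stackrel{\sim}{\to}\mathcal{L}^{\vee}$, but this is just a mirror image of the same argument, and your handling of the sign and of uniqueness is fine.
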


\begin{proof}
Consider the surjective morphism of $\mathcal{O}_U$-modules (cf. Lemma \ref{exactseq})
\begin{align*}
\mathcal{E} &\to \mathcal{L}^{\vee}\\
         e &\mapsto \langle \ \ , e\rangle|_{\mathcal{L}}\text{.}
\end{align*}
Since $U$ is affine, there exists a sequence $(f_1',\ldots,f_n')$ of global sections of $\mathcal{E}$ lifting the dual basis of $(e_1,\ldots,e_n)$ in $\mathcal{L}^{\vee}$, so that $\langle e_i,f_j'\rangle = \delta_{ij}$ for every $1\le i,j\le n$. As $\mathcal{L}$ is an isotropic subbundle of $\mathcal{E}$, to prove (1) it is sufficient to show the existence of global sections $\ell_j$ of $\mathcal{L}$ such that 
\begin{align*}
f_j \defeq f_j' + \ell_j
\end{align*}
satisfy $\langle f_i,f_j\rangle = 0$ for every $1\le i,j\le n$.

Since the bilinear form $\langle \ , \ \rangle$ is alternating, $A \defeq (\langle f_i',f_j'\rangle)_{1\le i,j\le n}$ is an antisymmetric matrix in $M_{n\times n}(\mathcal{O}_U(U))$. Thus, there exists a matrix $B = (b_{ij})_{1\le i,j \le g}$ in $M_{n\times n}(\mathcal{O}_U(U))$ such that $A = B - B\transp$. We put
\begin{align*}
\ell_i \defeq \sum_{j=1}^n b_{ij}e_j\text{,}
\end{align*}
hence
\begin{align*}
\langle f_i,f_j \rangle  = \langle f_i',f_j'\rangle + \langle \ell_i,f_j' \rangle - \langle \ell_j,f_i' \rangle = \langle f_i',f_j'\rangle + b_{ij} - b_{ji} = 0\text{.} 
\end{align*}

We now proceed to the proof of (2). As $\mathcal{F}$ is an isotropic subbundle of $\mathcal{E}$ satisfying $\mathcal{L} \oplus \mathcal{F} = \mathcal{E}$, the morphism of $\mathcal{O}_U$-modules
\begin{align*}
\mathcal{F} &\to \mathcal{L}^{\vee}\\
        f &\mapsto \langle \ \ , f\rangle|_{\mathcal{L}}  
\end{align*}
is injective by non-degeneracy of $\langle \ , \ \rangle$, thus an isomorphism since $\mathcal{F}$ and $\mathcal{L}^{\vee}$ have equal rank. The existence and unicity of $(e_1,\ldots,e_n)$ follows from remarking that $(e_1,\ldots,e_n, f_1,\ldots,f_n)$ is a symplectic basis of $\mathcal{E}$ over $U$ if and only if $(e_1,\ldots,e_n)$ is the basis of $\mathcal{L}$ over $U$ dual to the basis $(\langle \ \ , f_1\rangle|_{\mathcal{L}}, \ldots, \langle \ \ , f_n\rangle|_{\mathcal{L}})$ of $\mathcal{L}^{\vee}$.
\end{proof}

\section{Gauss-Manin connection on some elliptic curves}

\subsection{The Weierstrass elliptic curve}

Let
\begin{align*}
W \defeq \Spec \CC[g_2,g_3,\Delta^{-1}]
\end{align*}
where
\begin{align*}
\Delta \defeq g_2^3 - 27g_3^2\text{.}
\end{align*}
Then we can define an elliptic curve $E$ over $W$ by the classical Weierstrass equation
\begin{align*}
y^2 = 4x^3 -g_2x-g_3\text{.}
\end{align*}
Further, we define a symplectic-Hodge basis $(\omega,\eta)$ of $E_{/W}$ by the formulas
\begin{align*}
\omega \defeq \frac{dx}{y}\text{, } \ \ \ \eta \defeq x\frac{dx}{y}\text{.} 
\end{align*}

\begin{lemma} \label{GMconW}
With the above notations, the Gauss-Manin connection $\nabla$ on $H^1_{\dR}(E/W)$ is given by
\begin{align*}
\nabla \left(\begin{array}{cc}
       \omega & \eta
       \end{array}\right) = \left(\begin{array}{cc}
       \omega & \eta\end{array}\right)  \tensor \frac{1}{\Delta}
       \left(\begin{array}{cc}
       \Omega_{11} & \Omega_{12}\\
       \Omega_{21} & \Omega_{22}
       \end{array}\right)
\end{align*}
where
\begin{align*}
\Omega_{11} &= -\frac{1}{4}g_2^2\, dg_2 + \frac{9}{2}g_3\,dg_3\\
\Omega_{12} &= \frac{3}{8}g_2g_3\,dg_2 -\frac{1}{4}g_2^2\, dg_3\\
\Omega_{21} &= -\frac{9}{2}g_3 \, dg_2 + 3g_2\, dg_3\\
\Omega_{22} &= -\Omega_{11}\text{.}
\end{align*}
\end{lemma}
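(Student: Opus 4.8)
The plan is to compute $\nabla$ directly on the affine Weierstrass model, using the standard description of the Gauss--Manin connection as ``differentiate with respect to the base parameters, then reduce modulo exact forms''. First I would reduce to the affine curve $E^{\circ}\defeq E\setminus\{O\}=\Spec\big(\CC[g_2,g_3,\Delta^{-1}][x,y]/(y^2-4x^3+g_2x+g_3)\big)$. Since $E^{\circ}$ is the complement of a single section and the residues of a closed relative $1$-form sum to zero, the residue exact sequence yields an isomorphism $H^1_{\dR}(E/W)\stackrel{\sim}{\to}H^1_{\dR}(E^{\circ}/W)$, and the latter is the cokernel of $d\colon\mathcal{O}(E^{\circ})\to\Omega^1_{E^{\circ}/W}(E^{\circ})$ (as $W$ is affine and everything is quasi-coherent). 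It is classical (reduction of pole order at $O$, as in the computation of Weierstrass equations, cf.\ \cite{KM85} 2.2) that every such class is congruent modulo exact forms to a $\CC[g_2,g_3,\Delta^{-1}]$-linear combination of $[\omega]$ and $[\eta]$, and that $[\omega],[\eta]$ are linearly independent over $\CC[g_2,g_3,\Delta^{-1}]$; hence $(\omega,\eta)$ is a basis and it suffices to express $\nabla\omega$, $\nabla\eta$ in it.

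Next I would use the concrete recipe for $\nabla$: a class is represented by a relative $1$-form $\alpha$ on $E^{\circ}$ (automatically closed, being of top relative degree); lift it to an absolute form $\widetilde{\alpha}\in\Omega^1_{E^{\circ}/\CC}$. The image of $d\widetilde\alpha$ in $\Omega^2_{E^{\circ}/W}$ vanishes, so $d\widetilde\alpha$ lies in $p^*\Omega^1_{W/\CC}\wedge\Omega^1_{E^{\circ}/\CC}$; writing $d\widetilde\alpha=dg_2\wedge\beta_2+dg_3\wedge\beta_3$ with $\beta_i$ relative $1$-forms gives $\nabla[\alpha]=[\beta_2]\tensor dg_2+[\beta_3]\tensor dg_3$ (with the sign conventions of \cite{BBM82} 0.3). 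Differentiating the defining equation gives the absolute relation $2y\,dy=(12x^2-g_2)\,dx-x\,dg_2-dg_3$, whence
\[ d\widetilde\omega=-y^{-2}\,dy\wedge dx=dg_2\wedge\tfrac{x\,dx}{2y^3}+dg_3\wedge\tfrac{dx}{2y^3},\qquad d\widetilde\eta=x\,d\widetilde\omega=dg_2\wedge\tfrac{x^2\,dx}{2y^3}+dg_3\wedge\tfrac{x\,dx}{2y^3}. \]
So everything comes down to expressing the classes of $\tfrac{dx}{y^3},\tfrac{x\,dx}{y^3},\tfrac{x^2\,dx}{y^3}$ in terms of $\omega$ and $\eta$.

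The heart of the argument is this pole reduction. For $g$ a polynomial in $x$ of degree $\le 2$ one has, as a relative exact form,
\[ d\Big(\tfrac{g(x)}{y}\Big)=\tfrac{2g'(x)(4x^3-g_2x-g_3)-g(x)(12x^2-g_2)}{2y^3}\,dx; \]
expanding, using $y^2=4x^3-g_2x-g_3$ to eliminate $x^3$ and higher powers, and collecting the coefficients of $1,x,x^2$ in $g$ produces three congruences modulo exact forms among $\omega,\eta$ and the three forms above. Solving this $3\times3$ linear system over $\CC[g_2,g_3]$ — whose relevant determinant is, up to a nonzero constant, $\Delta=g_2^3-27g_3^2$, which is exactly why $\Delta^{-1}$ enters and reflects the smoothness of $E^{\circ}/W$ — gives, modulo exact forms,
\[ \tfrac{dx}{y^3}\equiv\tfrac{9g_3\,\omega+6g_2\,\eta}{\Delta},\qquad \tfrac{x\,dx}{y^3}\equiv\tfrac{-g_2^2\,\omega-18g_3\,\eta}{2\Delta},\qquad \tfrac{x^2\,dx}{y^3}\equiv\tfrac{3g_2g_3\,\omega+2g_2^2\,\eta}{4\Delta}. \]
Substituting these back into the formulas for $d\widetilde\omega$ and $d\widetilde\eta$ and reading off $\beta_2,\beta_3$ yields exactly the matrix $\tfrac1\Delta(\Omega_{ij})$ of the statement. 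The identity $\Omega_{22}=-\Omega_{11}$ is then automatic: $\nabla$ is compatible with the symplectic pairing and $\langle\omega,\eta\rangle_{\lambda_E}=1$ (cf.\ (\ref{C})), so its connection matrix takes values in $\mathfrak{sp}_2=\mathfrak{sl}_2$, hence is trace-free.

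I expect the only genuine difficulties to be bookkeeping: pinning down the precise sign normalization so that the recipe matches the conventions of \cite{BBM82} 0.3, and carrying out the pole reduction of the third step without arithmetic slips. There is no conceptual obstacle once the identification $H^1_{\dR}(E/W)\cong H^1_{\dR}(E^{\circ}/W)$ and the reduction-of-poles procedure are in place; alternatively one could sidestep the computation by transporting a known formula for the Gauss--Manin connection in Weierstrass form along the given change of coordinates, but the direct computation seems cleanest here.
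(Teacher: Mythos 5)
Your computation is correct, but it follows a genuinely different route from the paper's. You carry out the direct algebraic (Katz--Oda) computation: identify $H^1_{\dR}(E/W)$ with $\Omega^1(E^{\circ})/d\mathcal{O}(E^{\circ})$ on the affine Weierstrass model, lift $\omega,\eta$ to absolute forms, differentiate, and reduce poles via $d(g(x)/y)$ for $\deg g\le 2$; I checked your three congruences for $[dx/y^3]$, $[x\,dx/y^3]$, $[x^2\,dx/y^3]$ and the resulting connection matrix, and they reproduce the stated $\Omega_{ij}$ exactly (and your trace-free argument via compatibility of $\nabla$ with $\langle\ ,\ \rangle_{\lambda_E}$ and $\langle\omega,\eta\rangle=1$ is a valid way to see $\Omega_{22}=-\Omega_{11}$). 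The paper instead argues by symmetry and analytic comparison: the $\mathbb{G}_m$-action $(g_2,g_3)\mapsto(u^{-4}g_2,u^{-6}g_3)$, $(x,y)\mapsto(u^{-2}x,u^{-3}y)$ together with regular singularities forces each $\Omega_{ij}$ to be a specific pair of monomials in $g_2,g_3$ with eight undetermined constants, which are then pinned down by pulling back to the analytic family $\mathbb{E}\to\mathbb{H}$ and quoting Katz's formula for the Gauss--Manin connection in terms of $E_2$ and $E_4$ (\cite{katz73} A1.3); your route is precisely the ``direct algebraic approach'' the paper relegates to a footnote (citing \cite{KO68} and \cite{kedlaya07}). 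Your method buys a self-contained, purely algebraic proof with no analytic input and makes the role of $\Delta$ (smoothness of $E^{\circ}/W$) transparent, at the cost of the pole-reduction and sign bookkeeping you flag; the paper's method avoids that bookkeeping and explains conceptually, via weights, why the entries have the monomial shape they do, but it depends on the classical Weierstrass uniformization and an external formula.
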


Let us briefly explain how these expressions follow from the description given in \cite{katz73} A1.3 of the Gauss-Manin connection on the relative first de Rham cohomology of the universal elliptic curve $\mathbb{E}$ over the Poincaré half-plane $\mathbb{H}$ (whose fiber at each $\tau\in \mathbb{H}$ is given by the complex torus $\mathbb{E}_{\tau}=\CC/(\ZZ + \ZZ\tau)$).\footnote{A direct algebraic approach is also possible. See for instance \cite{KO68} 3 and \cite{kedlaya07} 3.4.}

 We first remark that for any $u\in \CC^{\times}$ we can define an automorphism ${M_u}_{/\mu_u}: E_{/W} \to E_{/W}$ in the category $\mathcal{A}_{1,\CC}$ by
\begin{align*}
\mu_u(g_2,g_3) = (u^{-4}g_2,u^{-6}g_3)\text{, } \ \ \ M_u(x,y)=(u^{-2}x,u^{-3}y)\text{.}
\end{align*}
Using that the Gauss-Manin connection commutes with base change and admits regular singularities, we deduce by homogeneity that there exists constants $c_1,\ldots,c_8$ in $\CC$ such that
\begin{align*}
\Omega_{11} = c_1g_2^2\, dg_2 + c_2g_3\, dg_3\text{, }\ \ &\Omega_{12} = c_3g_2g_3\, dg_2 +c_4g_2^2\, dg_3\text{, }\\
 \Omega_{21} = c_5g_3\, dg_2 + c_6g_2\, dg_3\text{, } \ \ & \Omega_{22} = c_7g_2^2\, dg_2 + c_8g_3\, dg_3 \text{.}
\end{align*}
To determine these constants, we consider the cartesian diagram in the category of complex analytic spaces
$$
\raisebox{-0.5\height}{\includegraphics{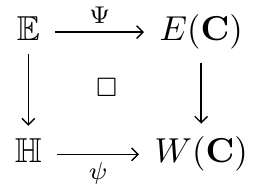}}
$$
given by the classical Weierstrass theory:
\begin{align*}
\psi(\tau)=(g_2(\tau),g_3(\tau))\text{, }\ \ \ \Psi_{\tau}(z)=(\wp_{\tau}(z),\wp_{\tau}'(z)) 
\end{align*}
Finally, we apply once again that that the formation of the Gauss-Manin connection (now in the complex analytic category) commutes with base change, and we use the formulas in \cite{katz73} A1.3:
\begin{align*}
\nabla \left(\begin{array}{cc}
       dz & \wp_{\tau}(z)dz
       \end{array}\right) = \left(\begin{array}{cc}
       dz & \wp_{\tau}(z)dz\end{array}\right)  \tensor \frac{1}{2\pi i}
       \left(\begin{array}{cc}
       -(2\pi i)^2E_{2}(\tau)/12 & -(2\pi i)^4E_4(\tau)/144\\
       1 & (2\pi i)^2E_2(\tau)/12
       \end{array}\right)d\tau
\end{align*}

\subsection{The elliptic curve ${X}_{/B}$ over $\ZZ[1/6]$} \label{GMconB}

Let
\begin{align*}
B \defeq \Spec \ZZ[1/6, e_2,e_4,e_6, \Delta^{-1}]
\end{align*}
where
\begin{align*}
\Delta \defeq e_4^3 - e_6^2\text{.}
\end{align*}
We define an elliptic curve $X$ over $B$ by
\begin{align*}
y^2 = 4\left(x+ \frac{e_2}{12}\right)^3 - \frac{e_4}{12}\left(x+ \frac{e_2}{12}\right) + \frac{e_6}{216}\text{.}
\end{align*}
We define a symplectic-Hodge basis $(\omega, \eta)$ of $X_{/B}$ by the formulas
\begin{align*}
\omega \defeq \frac{dx}{y}\text{, } \ \ \ \eta \defeq x\frac{dx}{y}\text{.} 
\end{align*}

Note that there is a morphism $F_{/f} : (X_{\CC})_{/B_{\CC}} \to E_{/W}$ in $\mathcal{A}_{1,\CC}$ given by
\begin{align*}
f(e_2,e_4,e_6) = \left(\frac{e_4}{12},-\frac{e_6}{216}\right)\text{, } \ \ \ F(x,y) = \left(x + \frac{e_2}{12},y \right)\text{.}
\end{align*}
By pulling back the Gauss-Manin connection on $H^1_{\dR}(E/W)$ described in Lemma \ref{GMconW} by the morphism $F_{/f}$, we obtain that the Gauss-Manin connection $\nabla$ on $H^1_{\dR}(X/B)$ over $\ZZ[1/6]$ is given by
\begin{align*}
\nabla \left(\begin{array}{cc}
       \omega & \eta
       \end{array}\right) = \left(\begin{array}{cc}
       \omega & \eta\end{array}\right)  \tensor \frac{1}{\Delta}
       \left(\begin{array}{cc}
       \Omega_{11} & \Omega_{12}\\
       \Omega_{21} & \Omega_{22}
       \end{array}\right)
\end{align*}
where
\begin{align*}
\Omega_{11} &= \left(\frac{e_2e_6 - e_4^2}{4} \right)de_4 + \left(\frac{e_6-e_2e_4}{6} \right)de_6\\
\Omega_{12} &=-\frac{\Delta}{12}de_2 - \left(\frac{e_4e_6-2e_2e_4^2+e_2^2e_6}{48} \right) de_4 + \left(\frac{e_4^2-2e_2e_6 + e_2^2e_4}{72}\right)de_6\\
\Omega_{21} &= 3e_6de_4 -2e_4de_6\\
\Omega_{22} &= -\Omega_{11}\text{.}
\end{align*}

\subsection{The universal elliptic curve ${X_1}_{/B_1}$ over $\ZZ[1/2]$} \label{GMunivell}

Consider the elliptic curve $X_{1}$ over $B_1$ defined in Theorem \ref{casg=1} and let $\Phi_{/\varphi} : (X_{1,\ZZ[1/6]})_{/B_{1,\ZZ[1/6]}} \to X_{/B}$ be the isomorphism in $\mathcal{A}_{1,\ZZ[1/6]}$ given by
\begin{align*}
\varphi(b_2,b_4,b_6) = (b_2,b_2^2-24b_4,b_2^3 -36b_2b_4 + 216b_6)\text{, } \ \ \ \Phi(x,y)=(x,2y)\text{.} 
\end{align*}

If $(\omega_1,\eta_1)$ denotes de symplectic-Hodge basis of ${X_{1}}_{/B_1}$ defined in Theorem \ref{casg=1}, then by pulling back the Gauss-Manin connection on $H^1_{\dR}(X/B)$ described in \ref{GMconB} by the isomorphism $\Phi_{/\varphi}$, we obtain that the Gauss-Manin connection $\nabla$ on $H^1_{\dR}(X_{1}/B_1)$ over $\ZZ[1/2]$ is given by

\begin{align*}
\nabla \left(\begin{array}{cc}
       \omega_1 & \eta_1
       \end{array}\right) = \left(\begin{array}{cc}
       \omega_1 & \eta_1\end{array}\right)  \tensor \frac{1}{\Delta}
       \left(\begin{array}{cc}
       \Omega_{11} & \Omega_{12}\\
       \Omega_{21} & \Omega_{22}
       \end{array}\right)
\end{align*}
where
\begin{align*}
\Omega_{11} &= \frac{b_2^2b_6-6b_4b_6-b_2b_4^2}{8} db_2 + \frac{4b_4^2-3b_2b_6}{2}db_6 + \frac{18b_6 - b_2b_4}{4}db_6\\
\Omega_{12} &= \frac{2b_4^3 +9b_6^2 - 2b_2b_4b_6}{4}db_2 + \frac{b_2^2b_6-b_2b_4^2 -6b_4b_6}{4}db_6 + \frac{4b_4^2-3b_2b_6}{4}db_6\\
\Omega_{21} &= \frac{3b_2b_6 - 4b_4^2}{4}db_2 + \frac{b_2b_4-18b_6}{2}db_4 + \frac{24b_4-b_2^2}{4}db_6\\
\Omega_{22} &= -\Omega_{11}\text{.}
\end{align*}

\end{appendices}

\end{document}